\documentclass[10pt]{amsart}
\usepackage{amsmath,amsfonts,amssymb,amsthm,epsfig,verbatim,ifthen,graphicx}
\usepackage[all]{xy}
\usepackage{tikz}
\usetikzlibrary{automata,positioning}
\pagestyle{headings}

\setcounter{equation}{0} \numberwithin{equation}{section}
\usepackage{enumitem}
\theoremstyle{definition} \addtolength{\oddsidemargin}{-0.5in}
\addtolength{\evensidemargin}{-0.5in}
 \textwidth =6in \usepackage{amsmath}
\usepackage{amsfonts}
\usepackage{amstext}
\usepackage{amsbsy}
\usepackage{amsopn}
\usepackage{amsxtra}
\usepackage{upref}
\usepackage{amsthm}
\usepackage{amsmath}
\usepackage{amssymb}
\usepackage{epsfig,verbatim,ifthen,graphicx}
\newtheorem{prop}{Proposition}[section]
\newtheorem{lema}{Lemma}[section]
\newtheorem{teo}{Theorem}[section]
\newtheorem{eje}{Example}[section]
\newtheorem{coro}{Corollary}[section]
\newtheorem{claim}{Claim}[section]

\theoremstyle{definition}
\newtheorem{defi}{Definition}[section]
\newtheorem{rem}{Remark}[section]

\def\R{{\mathbb R}}

\def\N{{\mathbb N}}

\def\F{{\mathcal F}}

\def\G{{\mathcal G}}
\def\H{{\mathcal H}}

\title[Hidden Gibbs measures on shift spaces over countable alphabets]{Hidden Gibbs measures on shift spaces over countable alphabets}
\date{\today}

\begin{thanks}
{This research was partly developed within the Ph.D. thesis research of C.L.  G.I.\ was partially supported by CONICYT PIA ACT172001 and by Proyecto Fondecyt 1150058.
C.L.\  and Y.Y\ were partially supported by CONICYT PIA ACT172001, Proyecto Fondecyt 1151368 , and Grupo de investigaci\'{o}n GI 172208/C  at Universidad del del B\'{i}o-B\'{i}o. }
\end{thanks}

\subjclass[2000]{37D35, 37D25}
\keywords{Thermodynamic formalism, Gibbs measures, Hidden Gibbs measures, almost-additive sequences}

\author{Godofredo Iommi} \address{Facultad de Matem\'aticas,
Pontificia Universidad Cat\'olica de Chile (PUC), Avenida Vicu\~na Mackenna 4860, Santiago, Chile}
\email{giommi@mat.puc.cl}
\urladdr{http://www.mat.puc.cl/\textasciitilde giommi/}

\author{Camilo Lacalle}
\address{Grupo de investigaci\'{o}n en Sistemas Din\'{a}micos y Aplicaciones-GISDA, Departamento de Matem\'atica, Facultad de Ciencias, Universidad del B\'{i}o-B\'{i}o, Avenida Collao 1202, Casilla 5C
Concepci\'on, Chile.}
\email{clacalle7@gmail.com}

\author{Yuki Yayama}
\address{Grupo de investigaci\'{o}n en Sistemas Din\'{a}micos y Aplicaciones-GISDA, Departamento de Ciencias B\'{a}sicas,  Universidad del B\'{i}o-B\'{i}o, Avenida Andr\'{e}s Bello 720, Chill\'{a}n, Chile}
\email{yyayama@ubiobio.cl}

\begin{document}

\begin{abstract} 
We study the thermodynamic formalism for particular types of sub-additive sequences on a class of subshifts over countable alphabets. The subshifts we consider include factors of irreducible countable Markov shifts under certain conditions. We show the variational principle for topological pressure. We also study conditions for the existence and uniqueness of invariant ergodic Gibbs measures and the uniqueness of equilibrium states. As an application, we extend the theory of factors of (generalized) Gibbs measures on subshifts on finite alphabets to that on certain subshifts over countable alphabets.
\end{abstract}

\maketitle

\section{Introduction}

Thermodynamic formalism is an area of ergodic theory which addresses the problem of choosing relevant invariant measures among the, sometimes very large, set of invariant probabilities. This theory was brought  from statistical mechanics into dynamics in the early seventies by Ruelle and Sinai among others \cite{Ru_book, Sinai}.  The powerful formalism developed to study equilibrium of systems consisting of a large number of particles (e.g. gases) has been surprisingly efficient to describe certain dynamical systems that exhibit complicated behavior. The theory has been developed in several directions. Originally the dynamical system was assumed to be defined on a compact set and the observable was a continuous function. Both assumptions have been relaxed over the years.
For example, Gurevich \cite{gu1, gu2, gs},  Mauldin and Urba\'nski \cite{mu, mu2} and Sarig \cite{s1, s3, s3} have developed thermodynamic formalism in the non-compact setting of countable Markov shifts. Since there exists a wide range of relevant dynamical systems that can be coded with countable Markov shifts, this theory has had relevant applications. Other extension of thermodynamical formalism to non-compact settings was developed by Pesin and Pitskel \cite{pepi}. In that case, the system is not assumed to have any Markov structure but it has to be the restriction of a continuous map defined on a compact set. Also, the observables have to have continuous extensions (therefore observables are assumed to be bounded). In a different direction, the theory was extended to consider not only a single observable but instead a sequence of them. Certain additivity assumptions were required on the sequence in order for the ergodic theorems to hold. This circle of ideas was called non-additive thermodynamic formalism. It was originally formulated by Falconer \cite{f} with the purpose of applying it in the study of the dimension theory of non-conformal dynamical systems. Ever since, different additivity assumptions have been considered in the sequence. For example, Barreria  \cite{b1, b2, b3} developed the theory assuming a strong additivity assumption called almost-additivity. Mummert \cite{m} also obtained results in this direction. Cao, Feng and Huang \cite{cao} studied the case in which the sequence was only assumed to be sub-additive. More generally, Feng and Huang \cite{FH} extended the theory to handle asymptotically sub-additive sequences.  Over the last few years, thermodynamic formalism for non-compact dynamical systems and sequences of observables has been developed. Iommi and Yayama \cite{iy1, iy2} have studied  thermodynamic formalism for almost-additive sequences on (non-compact)  countable Markov shifts. Also, K{\"a}enm{\"a}ki and Reeve \cite{KR} studied the formalism for sequences of potentials under weaker additivity assumptions but for the full shift over a countable alphabet.

In this paper, we further develop the theory. We consider particular types of sub-additive sequences on a fairly general class of subshifts. 
We call this class the class of \emph{countable sofic shifts}, where a
countable sofic shift is defined as the image of a countable Markov
shift under a one-block factor map with an additional condition (see Section \ref{ss:fac}).
This class therefore  generalizes the concept of a sofic shift over a
finite alphabet. We stress that this dynamical system is non-Markov and
it is defined
on  a non-compact space. Even in the case of a single observable,
several of our results are new, to the best of our knowledge. The types
of sub-additive
sequences we consider are generalizations of  continuous functions with
tempered variation on subshifts satisfying the weak specification
property (see
Section \ref{potentials} for details). In Section \ref{back}, we propose
a definition of the topological pressure and compare it with the
Gurevich pressure. Then we prove the corresponding
variational principle
in Theorems \ref{vp1} and \ref{thmsofic} in Section \ref{vpvp}.  
In particular, 
Section \ref{sofic} studies a variational principle  for  sequences with tempered variation
defined on  \emph{finitely irreducible subshifts} (see Definition
\ref{fi}) which
preserve a certain finiteness property found in compact spaces.  In
Section \ref{vp:spc}, the variational principle  is also studied in the
case when the Bowen sequences (see Definition \ref{bowen}) are defined
on countable Markov shifts which are not necessarily finitely irreducible.
We see that  if the topological pressure of the sequence considered in
Section \ref{vp:spc} is finite, then the space on which it is defined is
finitely irreducible.
Hence, this type of sequence is  suitable for studying Gibbs measures.
In Section \ref{sectiongibbs}, we show under some assumptions the existence and uniqueness of
Gibbs measures on finitely irreducible countable sofic shifts, together with uniqueness of the
Gibbs equilibrium
states (see Theorem \ref{main2}).  Our results extend those in
\cite{KR}, encompassing more general classes of sequences and far more
general dynamical systems.

Differences with the work in \cite{iy1, iy2} are discussed in Section
\ref{differ}.  In particular, not every almost-additive sequence studied in
\cite{iy1} is in the class of  sequences we study here (see Example
\ref{dif}). This phenomenon is different from what  is observed in the
compact case,
in which every almost-additive sequence satisfies the assumptions we
consider. Examples of the kinds of sequences we study are  presented and
 compared with almost-additive sequences in Section
\ref{sectionexample}, and these are studied especially with the
variational principle in Section \ref{vpvp}. 

One of the main applications of the thermodynamic formalism studied in this article  is to develop the theory of factors of Gibbs measures on shift spaces over countable alphabets. 
An important question in the area is to determine under which conditions the (generalized) Gibbs property  is preserved under a one-block factor map. For  Gibbs measures for continuous functions on subshifts over finite alphabets, this problem has been studied widely, for example, by Chazotte and Ugalde \cite{CUO,CU}, Kempton  and Pollicott  \cite{PK},  Kempton \cite{k}, Piranio \cite{pm}, Jung \cite{ju},  Verbitskiy \cite{v} and  Yoo \cite{jy}. 
For generalized Gibbs measures for sequences on subshifts over finite alphabets, this type of question has been addressed by Barral and Feng \cite{bf}, Feng \cite{Fe4} and Yayama \cite{Y1, Y}, especially in connection with dimension problems on non-conformal repellers.
In Section \ref{hiddeng}, we address this question in the (non-compact and non Markov)  context of finitely irreducible countable sofic shifts. 
Applying the results of Sections \ref{vpvp} and \ref{sectiongibbs}, in
Theorem \ref{hgibbs}  we show that under certain conditions a factor of
a unique
invariant Gibbs measure for an almost-additive sequence on a finitely
irreducible countable sofic shift is 
a Gibbs measure for a type of sequence we study in Section
\ref{potentials}.  The most general form of the variational principle concerning factor maps in this paper is given in Theorem \ref{ftemperedv}. The results in Section \ref{hiddeng} generalize some results  of
\cite{Y}.
Finally, in Section \ref{applications}, applications are given to the study of some problems in dimension theory, in particular,  product of matrices and the singular value function.

\section{Background}\label{back}

\subsection{Subshifts on countable alphabets and specification properties}
This section is devoted to recall basic notions of symbolic dynamics. We discuss countable Markov shifts, factor maps and different specification properties in this setting. For more details we refer the reader to \cite{LM, BP}.
 Let $(t_{ij})_{\N  \times \N}$ be a transition matrix of zeros and ones (with no row and no column made entirely of zeros). 
 The associated \emph{(one-sided) countable Markov shift} $(\Sigma, \sigma)$ is the set
\[ \Sigma:= \left\{ (x_n)_{n \in \N} : t_{x_{n}, x_{n+1}}=1  \text{ for every } n \in \N \right\}, \]
together with the shift map $\sigma: \Sigma  \to \Sigma $ defined by  $\sigma(x)=x'$, for $x=(x_n)_{n=1}^{\infty}, x'=(x'_n)_{n=1}^{\infty}$ with $x'_n=x_{n+1}$ for all $n\in \N$. If for every $(i,j) \in \N^2$ the transition matrix satisfies $t_{ij}=1$, then we say that the corresponding countable Markov shift is the \emph{full shift on a countable alphabet}.  

An \emph{allowable word} of length $n \in \N$ for $\Sigma$ is a string $i_1 \dots i_{n}$ where $t_{i_{j,} i_{j+1}}=1$ for every $j \in \{1, \dots, n-1\}$.  For each $n\in\N$, denote by $B_n(\Sigma)$ the set of allowable words of length $n$ of $\Sigma$. For $i_{1} \dots i_{n} \in B_n(\Sigma)$, we define a cylinder set $[i_1 \dots i_{n}]$ of length $n$ by 
\begin{equation*}
[i_1 \dots i_{n}]= \left\{x \in \Sigma: x_j=i_j \text{ for } 1 \le j \le n \right\}.
\end{equation*}

We endow $\Sigma$ with the topology generated by cylinder sets. This is a metrizable space. The following metric generates the cylinder topology. Let $d$  on $\Sigma$ by $d(x,x')={1}/{2^{k}}$ if
$x_i={x'}_i$ for all $1\leq i\leq k$ and $x_{k+1}\neq {x'}_{k+1},$ $d(x,x')=1$ if $x_1\neq x'_1$, and $d(x,x')=0$ otherwise.  We stress that, in general, $\Sigma$ is a non-compact  space. 

We can drop the Markov structure and define subshifts on countable alphabets.   Let $X$ be a closed subset of the full shift $\Sigma$. If $X$ is $\sigma$-invariant, that is $\sigma(X)\subseteq X$, then we say that  $(X, \sigma\vert_X)$ is a \emph{subshift} and we write $\sigma_X$ instead of $\sigma\vert_X$.  In particular, if $X$ is not a subset of the full shift on a finite alphabet, then we say that $(X, \sigma_X)$ is a \emph{subshift on a countable alphabet}.
We also  write  $(X, \sigma)$ for simplicity. The set $X$ is endowed with the topology induced by $\Sigma$. In this context the set of allowable words of length $n$ of $X$ is defined by
\begin{equation*}
B_n(X):= \left\{	i_{1} \dots i_{n} \in B_n(\Sigma) : [i_1 \dots i_{n}] \cap X \neq \emptyset		\right\}.
\end{equation*}
For an allowable word $w=i_{1} \dots i_{n}$ we denote by $|w|$ its length, $|i_{1} \dots i_{n}|=n$.  Given a subshift $(X, \sigma)$ on a countable alphabet, we now define the language of $X$.  The  word of length $n=0$ of $X$ is called the empty word and it is denoted by $\varepsilon$.
The language of $X$ is the set $B(X)=\bigcup_{n=0}^{\infty}B_n(X)$, i.e., the union of all allowable words of $X$ and the empty word $\varepsilon$.

We now define several notions of specification that generalize the one first introduced by Bowen \cite{bo} with the purpose of proving that there exits a unique measure of maximal entropy for a large class of compact subshifts.  Our definitions are given in terms of the language of $X$.

\begin{defi} \label{def:irr}
We say that a subshift $(X,\sigma)$ on a countable alphabet is \emph{irreducible} if for any allowable words $u, v \in B(X)$, there exists an allowable word $w\in B(X)$  such that $uwv\in B(X)$. 
\end{defi}
\begin{defi} \label{def:spec}
We say that a subshift $(X,\sigma)$ on a countable alphabet  has the \emph{weak specification property} if there exists $p\in\N$ such that for any allowable words 
$u, v \in B(X)$, there exist  $0\leq k\leq p$ and $w\in B_{k}(X)$ such that $uwv\in B(X)$. If in addition, $k=p$ for any $u$ and $v$, then $X$ has the \emph{strong specification property}. We call such $p$ a weak (strong, respectively) specification number. 
\end{defi}

\begin{defi} \label{fi}
A subshift $(X,  \sigma)$ is \emph{finitely irreducible} if there exist $p \in \N$ and a finite subset $W_1 \subset \bigcup_{n=0}^{p}B_n(X)$ such that  for every $u, v \in B(X)$,  there exists $w \in W_1$ such that
$uwv \in B(X)$.
\end{defi}

\begin{defi} \label{def:fp}
A subshift $(X,  \sigma)$ is \emph{finitely primitive} if there exist $p \in \N$ and a finite subset $W_1 \subset B_p(X)$ such that  for every $u, v \in B(X)$,  there exists $w \in W_1$ such that
$uwv \in B(X)$.
\end{defi}

\begin{rem} 
Note that the weak specification property does not imply topologically mixing. However, if  $(\Sigma, \sigma)$ is a topologically mixing subshift of finite type defined on a finite alphabet with the weak specification property, then it has the strong specification property (see \cite[Lemma 3.2]{j}). The class of general shifts on finite alphabets with the weak specification property include irreducible sofic shifts (see \cite{j} and Definition \ref{def:sofic}). \end{rem}

As it is clear from the definition, the notion of finitely primitive (see Definition \ref{def:fp}) is essentially the same as that of specification introduced by Bowen \cite{bo} in a non-compact symbolic setting. There is a closely related class of countable Markov shifts studied by Sarig \cite{s3}.

\begin{defi} \label{BIP}
A countable Markov shift  $(\Sigma, \sigma)$ is said to satisfy  the \emph{big images and preimages property (BIP property)} if 
there exists $\{ b_{1} , b_{2}, \dots, b_{n} \}$ in the alphabet $S$ such that for every
$ a \in S$ there exist $i,j \in \{1, \dots, n\}$ such that  $t_{b_{i}a}t_{ab_{j}}=1$.
\end{defi}

\begin{rem} If the countable Markov shift $(\Sigma, \sigma)$ satisfies the BIP property, then for every symbol in the alphabet, say $a$, there exist $b_i, b_j \in \{ b_{1} , b_{2}, \dots, b_{n} \}$ such that $b_ia$ and $ab_j$ are allowable words. Note, however, that a system with the BIP property can have more than one transitive component. Indeed, if $\Sigma$ is the disjoint union of two full shifts on countable alphabets, then it satisfies the BIP property and it has two transitive components. 
\end{rem}

Nevertheless, as noted by Sarig \cite[p.1752]{s3} and by Mauldin and Urba\'nski \cite{mu2}, under the following dynamical assumption both notions coincide. A countable Markov shift is \emph{topologically mixing}, i.e., for each pair $x,y\in \N$, there exists $N \in \N$ such that for every $n > N$ there is an allowable  word $i_1\dots i_{n}\in B_n(\Sigma)$ such that $i_1=x, i_{n}=y$.

\begin{lema}
If $(\Sigma, \sigma)$ is a  topologically mixing countable Markov shift  with the BIP property, then it is  finitely primitive.
\end{lema}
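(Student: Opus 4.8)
The plan is to exploit the BIP property to reduce the problem of gluing two arbitrary words to the problem of gluing two of the finitely many distinguished symbols, and then to use topological mixing to interpolate between those symbols by words of one \emph{fixed} length.

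Concretely, let $\{b_1,\dots,b_n\}$ be the finite set of symbols furnished by the BIP property. For each pair $(i,j)\in\{1,\dots,n\}^2$, topological mixing provides $N_{ij}\in\N$ such that for every $m>N_{ij}$ there is an allowable word of length $m$ starting at $b_i$ and ending at $b_j$. Put $N:=\max_{1\le i,j\le n}N_{ij}$, fix $p:=N+1$, choose for each $(i,j)$ one word $\omega_{ij}\in B_p(\Sigma)$ beginning with $b_i$ and ending with $b_j$, and set $W_1:=\{\omega_{ij}: 1\le i,j\le n\}\subseteq B_p(\Sigma)$, which is finite. I claim $p$ and $W_1$ witness finite primitivity. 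Given nonempty words $u=u_1\cdots u_r$ and $v=v_1\cdots v_s$ in $B(\Sigma)$, the BIP property applied to $u_r$ gives some $b_i$ with $t_{u_r b_i}=1$, and applied to $v_1$ gives some $b_j$ with $t_{b_j v_1}=1$; then every transition of the concatenation $u\,\omega_{ij}\,v$ is legal — those inside $u$, the transition $u_r\to b_i$, those inside $\omega_{ij}$, the transition $b_j\to v_1$, and those inside $v$ — so $u\,\omega_{ij}\,v\in B(\Sigma)$ with $\omega_{ij}\in W_1$. The degenerate cases where $u$ or $v$ is the empty word $\varepsilon$ are handled the same way, using that $W_1\subseteq B_p(\Sigma)$ is nonempty (so for $u=v=\varepsilon$ any element of $W_1$ works, and if exactly one of them is $\varepsilon$ one uses only the relevant one of the two BIP conditions).

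The one point requiring care — and the reason topological mixing, and not merely the BIP property, is hypothesized — is the requirement in Definition \ref{def:fp} that \emph{all} words in $W_1$ have the \emph{same} length $p$. The BIP property by itself produces the symbols $b_i,b_j$ but controls neither the existence nor the length of connecting words; indeed a shift with the BIP property can decompose into several components between which there are no connecting words at all (cf.\ the remark following Definition \ref{BIP}). Topological mixing supplies, for each of the finitely many pairs $(i,j)$, connecting words of every sufficiently large length, and taking the maximum $N$ over these finitely many pairs yields a common admissible length $p=N+1$. This is exactly the strengthening that upgrades ``finitely irreducible'' to ``finitely primitive.''
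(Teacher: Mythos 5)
Your proof is correct and follows essentially the same route as the paper's: use the BIP symbols to cap the ends of $u$ and $v$, use topological mixing to connect each of the finitely many pairs of BIP symbols by words of every sufficiently large length, and take the maximum over these finitely many pairs to obtain a finite set of connecting words of one common length. The only (immaterial) differences are bookkeeping — the paper records the interior word $w_{l,r}^k$ separately so its set $\mathcal{F}$ has length $N+2$ rather than your $N+1$ — and your proposal is slightly more careful about the empty-word cases.
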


\begin{proof}
Let $a, c \in \mathcal{A}$ be two symbols of the alphabet. Since $(\Sigma, \sigma)$ is BIP, there exist $b_i, b_j \in \{ b_{1} , b_{2}, \dots, b_{n} \}$ in the alphabet, such that 
\begin{equation*}
ab_i \quad , \quad b_j c
\end{equation*}
are allowable words. Since $(\Sigma, \sigma)$ is topologically mixing, for each pair $b_l, b_r \in \{ b_{1} , b_{2}, \dots, b_{n} \},$  there exists $N_{l,r} \in \N$ such that for every  $k > N_{l,r}$ there is a word $w_{l,r}^k \in B_k(\Sigma)$ such that $b_l w_{l,r}^k b_r \in B_{k+2}(\Sigma)$. Let $N:= \max \{ N_{l,r} : l, r \in\{1, \dots ,n \}\} +1$ and consider the set
\begin{equation*}
\mathcal{F}:= \left\{b_j w_{j,i}^{N} b_i : i,j 	 \in\{1, \dots ,n \}		\right\}.
\end{equation*}
Then, for any pair of allowable words $u \in B_l(\Sigma), v \in B_m(\Sigma)$ there exists $b_j w_{j,i}^{N} b_i \in \mathcal{F}$ such that $u b_j w_{j,i}^{N} b_i v$ is an allowable word. The result now follows since every word in $\mathcal{F}$ has length $N+2$.
\end{proof}

\begin{rem}
Note that if $(\Sigma, \sigma)$ satisfies the strong specification property then it is topologically mixing and 
has infinite entropy. On the other hand, if $(\Sigma, \sigma)$ satisfies the weak specification property then it is irreducible and has infinite entropy (see Section \ref{vpvp}). 
\end{rem}

\subsection{Pressure for a class of sequences of continuous functions}\label{potentials}
In this section, we provide two definitions of pressure of sequences of continuous functions defined on  non-compact subshifts. 
We prove that under fairly general assumptions both coincide. 
Let $(X, \sigma)$ be a subshift on a countable alphabet. For each $n\in \N$, let  $f_{n}: X \to \R^{+}$ be a continuous function and $\mathcal{F}= \{ \log f_n \}_{n=1}^{\infty}$  a sequence of continuous functions on $X$.  In order to develop thermodynamic formalism and to be able to apply ergodic theorems, additivity assumptions are required on the sequences.
%
 
\begin{defi} \label{aaa}
A sequence $\mathcal{F}= \{ \log f_n \}_{n=1}^{\infty}$ of continuous functions on $X$ is called \emph{almost-additive} if there exists a constant $C\geq0$ such that for every $n,m\in \N, x\in X$, $\F$ satisfies
\begin{equation} \label{A2}
 f_{n+m}(x) \leq  f_n(x) f_{m}(\sigma^n x) e^{C}
  \end{equation}
and 
\begin{equation} \label{A1}
f_n(x) f_{m}(\sigma^n x) e^{-C} \leq f_{n+m}(x).  
\end{equation}
\end{defi}
In particular, $\F$ is called  \emph{sub-additive} if $\F$ satisfies  (\ref{A2}) with $C=0$ and $\F$ is \emph{additive} if $\F$ satisfies (\ref{A2}) and (\ref{A1}) with $C=0$. Note that we have  (\ref{A2}) if and only if the sequence $\F+C=\{\log (e^Cf_n)\}_{n=1}^{\infty}$ is sub-additive.
We also assume the following regularity condition. 
\begin{defi}\label{bowen}
A sequence $\mathcal{F}= \{ \log f_n \}_{n=1}^{\infty}$ of continuous functions on $X$ is called a \emph{Bowen}
sequence if there exists $M \in \R^{+}$ such that
\begin{equation}\label{bowenbound}
 \sup \{ M_n : n \in \NÊ\} \leq M,
\end{equation}
where
\[M_n= \sup \left\{ \frac{f_n(x)}{f_n(y)} : x,y  \in X, x_i=y_i \textrm{ for } 1 \leq i \leq n\right\}.\]
More generally, if $M_n<\infty$ for all $n\in \N$ and $\lim_{n \rightarrow \infty}(1/n)\log M_n=0$, then we say that $\mathcal{F}$ has  \emph{tempered variation}. Without loss of generality, we assume $M_{n}\leq M_{n+1}$ for all $n\in \N$. \end {defi}

\begin{rem} \label{rem:con_bow}
Definition \ref{bowen} extends a notion introduced by Walters \cite{w3} when developing thermodynamic formalism. We say that a continuous function  $f:X \to \R$  belongs to  the \emph{Bowen class} if the sequence $\{\log e^{S_n(f)}\}_{n=1}^{\infty}$, where $(S_nf)(x)=f(x)+f(\sigma(x))+\dots +f(\sigma^{n-1}(x))$ for each $x\in X$  is a Bowen sequence. The Bowen class contains the functions of summable variations  and the Bowen sequences are a generalization of functions in the Bowen class (see \cite{b2, iy1}).
\end{rem} 

We now list several assumptions we will use throughout the paper. These are hypothesis on both the system $(X, \sigma)$ and the sequence $\F$. 
\begin{enumerate}[label=(C\arabic*)]
\item \label{a0} The sequence $\F+C$ is sub-additive for some $C\geq 0$. 
\item There exist $p\in\N$ and $D>0$ such that  given any $u \in B_n(X), v \in B_m(X)$, $n, m\in\N$, there exists $w \in B_k(X), 0\leq k\leq p$ such that \label {a1} 
\begin{equation*}
\sup \{f_{n+m+k}(x):x \in [uwv]\} \geq D \sup \{f_n(x):x\in [u]\} 
\sup \{f_m(x):x \in [v]\}.
\end{equation*}
\item There exists a finite  set $W \subset \bigcup_{k=0}^p B_k(X)$ consisting of elements $w$ for which the property \ref{a1} holds. \label{a3}
\item$Z_1(\mathcal{F}):= \sum_{i \in \N } \sup \{f_{1}(x): x\in [i]\} < \infty$ \label{a2}. 
\end{enumerate}

In addition, we consider in Section \ref{sofic} sequences satisfying the following weaker condition.

\begin{enumerate}[label=(D\arabic*)]
\setcounter{enumi}{1}
\item \label{a4}  There exist $p\in\N$ and a positive sequence $\{D_{n,m}\}_{(n,m)\in \N \times \N}$ such that  given any $u \in B_n(X), v \in B_m(X)$, $n, m\in\N$, there exists $w \in B_k(X), 0\leq k\leq p$ such that 
$$\sup \{f_{n+m+k}(x):x \in [uwv]\} \geq D_{n,m} \sup \{f_n(x):x\in [u]\} 
\sup \{f_m(x):x \in [v]\},$$
where $\lim_{n\rightarrow\infty}(1/n)\log D_{n,m}=\lim_{m\rightarrow\infty}(1/m)\log D_{n,m}=0$.  Without loss of generality, we assume that $D_{n,m}\geq D_{n,m+1}$ and $D_{n,m}\geq D_{n+1,m}$.
\item There exists a finite  set $W \subset \bigcup_{k=0}^p B_k(X)$  consisting of elements $w$ for which the property \ref {a4} holds.  \label{a5}
\end{enumerate}
%

 If a sequence $\F$ on $X$ satisfies \ref{a1}  (\ref{a4}, respectively) with $w\in B_{p}(X)$ for all $w$, then we say that $\F$ on $X$ satisfies \ref{a1} (\ref{a4}, respectively) with the strong specification.

\begin{rem}\label{simplification}
Given a pair  $u\in B_n(X),v\in B_m(X)$, $n,m\in\N$, if \ref{a1} holds when $w=\varepsilon$, then we obtain that  $uv$ is an allowable word and  $\sup \{f_{n+m}(x):x \in [uv]\} \geq D \sup \{f_n(x):x\in [u]\}  \sup \{f_m(x):x \in [v]\}$. In particular, it is easy to see that if $(X,\sigma)$ is a subshift on a countable alphabet  and $\F$ is a Bowen sequence on $X$ satisfying \ref{a0} and \ref{a1}, then $W=\{\varepsilon\}$ in \ref{a3} if and only if $(X, \sigma)$ is the full shift on a countable alphabet and $\F$ is almost-additive on the full shift. The case when $\F$ is an almost-additive Bowen sequence on the full shift  has been studied in \cite{iy1}. 
\end{rem}
 \begin{rem}
Note that if conditions \ref{a1} or \ref{a4} are satisfied then $(X, \sigma)$ has the weak specification property. Moreover, if conditions \ref{a3} or \ref{a5} are satisfied then $(X, \sigma)$ is finitely irreducible.
 \end{rem}

%
%
%
We can now give the definitions of pressure.

\begin{defi}\label{defpressure}
Let $(X, \sigma)$ be an irreducible subshift on a countable alphabet and  $\F=\{\log f_n\}_{n=1}^{\infty}$ a sequence of continuous functions on $X$ with tempered variation  
satisfying \ref{a0}. Define $Z_n(\F)$ by
\begin{equation*}
Z_n(\F):=\sum_{i_1\dots i_n \in B_n(X)} \sup \left\{f_{n}(x): x\in [i_1\dots i_n]\right\}
\end {equation*}
and the \emph{topological pressure} of $\F$  by 
\begin{equation}\label{defp}
P(\F): =\limsup_{n\rightarrow\infty} \frac{1}{n}\log Z_n(\F),
\end{equation}
if $\limsup_{n\rightarrow\infty} (1/n)\log Z_n(\F)$ exists, including possibly $\infty$ and $-\infty$. 
\end{defi}

It is clear that  if $Z_1(\F)<\infty$ then sub-additivity of the sequence $\F+C$ implies that 
$P(\F)=\lim_{n\rightarrow\infty} (1/n)\log Z_n(\F)$ and $-\infty\leq P(\F)<\infty$. 
 We will see in Section  \ref{vpvp} that  
if $Z_1(\F)=\infty$, under certain additional assumptions on $(X, \sigma)$ and $\F$, we obtain $P(\F)=\infty$. 
The variational principal is studied for such sequences $\F$ in Section \ref{vpvp}. 

\begin{rem}
The topological pressure in Definition \ref{defpressure} is a natural extension of the classical definition of pressure for compact  subshifts. 
This definition was later extended by Mauldin and Urba\'nski \cite{mu} for countable Markov shifts satisfying the finitely irreducible condition.  
This notion of pressure was also extended for sequences of regular functions defined on subshifts of finite type by Barreira \cite{b1,b2,b3}, Falconer \cite{f}, Feng \cite{Fe1, Fe2, Fe3} and Cao, Feng and Huang \cite{cao} among others. 
Actually, assumption \ref{a1} was introduced by Feng \cite{Fe3} while studying thermodynamic formalism for potentials related to product of matrices and appeared also in the study of dimension of non-conformal repellers \cite{Fe4, Y1}. 
Moreover,  when $(X,\sigma)$  is a subshift on a finite alphabet,  Feng \cite{Fe4} studied thermodynamic formalism for the class  of sequences which satisfies \ref{a0} and  \ref{a1} (see Theorem \ref{feng}). Note that in this case \ref{a3} and \ref{a2} are automatically satisfied by compactness. K{\"a}enm{\"a}ki and Reeve \cite{KR} extended the work of Feng \cite{Fe3, Fe4}  to the full shift on a countable alphabet. They studied thermodynamic formalism for sequences of potentials defined on the full shift satisfying what they called \emph{quasi multiplicative} property. 
This  assumption on the sequences used in \cite{KR} is equivalent to assume conditions \ref{a0}, \ref{a1} with
$w\in \bigcup_{k=1}^p B_k(X)$, and \ref {a3} with $W \subset \bigcup_{k=1}^p B_k(X)$ on a Bowen sequence on the full shift. In Section \ref{differ}, we discuss the differences between almost-additivity and conditions \ref{a1} and \ref{a4}.
\end{rem}

Next we define the Gurevich pressure. Throughout  the paper, we identify the set of a countable alphabet with $\N$.

\begin{defi}\label{gurevich}
Let $(X, \sigma)$ be an irreducible subshift  on a countable alphabet and  $\F=\{\log f_n\}_{n=1}^{\infty}$ a sequence of continuous functions on $X$ with tempered variation satisfying \ref{a0} 
and \ref{a4}. 
For $a\in \N$, define 
\begin{equation*}
Z_n(\F, a):=\sum_{x:\sigma^{n}x=x}f_n(x)\chi_{[a]}(x),
\end{equation*}
where  $\chi_{[a]}(x)$ is a characteristic function of the cylinder $[a]$. 
The $\emph{Gurevich pressure}$ of $\F$ on $X$, denoted by $P_{G}(\F)$, is defined by
\begin{equation}\label{gurev}
P_{G}(\F):=\limsup_{n\rightarrow \infty} \frac{1}{n} \log Z_n(\F, a),
\end{equation}
if   $\limsup_{n\rightarrow \infty} (1/{n}) \log Z_n(\F, a)$ is independent of $a\in \N$.
\end{defi}
In Proposition \ref{gdefi}, we will study the definition of Gurevich pressure $P_{G}(\F)$
when $(X,\sigma)$ is a countable Markov shift and $Z_1(\F)<\infty$. If $Z_1(\F)=\infty$, under  certain assumptions on  $(X, \sigma)$ and $\F$, we obtain $P(\F)=P_{G}(\F)=\infty$ (see Section \ref{vpvp}). The definition is also studied in Section 
 \ref{sofic} when  $(X, \sigma)$ is a finitely irreducible countable sofic shift.

\begin{rem}
The Gurevich entropy was first introduced by Gurevich for countable Markov shifts. This notion  was later extended by Sarig \cite{s1} where he defines the Gurevich pressure of regular potentials defined on topologically mixing countable Markov shifts.  In \cite[Section 1]{ffy},
the definition was extended to a certain type of irreducible countable Markov shift.  It was shown by Dougall and Sharp in \cite[Section 3]{ds} that the definition could be extended to topological transitive shifts on countable alphabets for regular potentials. In all these cases, it was shown that the definition does not depend on the symbol $a$ chosen. The Gurevich pressure was defined and studied for almost-additive sequences  on topologically mixing countable Markov shifts by Iommi and Yayama \cite{iy1}. 
We stress that the definition given here extends both the class of sequences of potentials and the class of shifts (satisfying the weak specification)
previously considered in the literature.
\end{rem}

It was shown by Mauldin and Urba\'nski \cite{mu2} and by Sarig \cite{s3} that when restricted to topologically mixing countable Markov shifts satisfying the BIP property for a regular potential, Definitions \ref{defpressure} and \ref{gurevich} coincide. The next result extends this observation to countable Markov shifts satisfying the weak specification  property and to sequences of functions satisfying mild additivity assumptions.

\begin{prop}\label{gdefi}
Let $(X, \sigma)$ be a countable Markov shift 
and
$\F=\{\log f_n\}_{n=1}^{\infty}$ a sequence on  $X$ with  tempered variation  satisfying
\ref{a0} and \ref{a4}.  If $P(\F)<\infty$,  then
\begin{equation}\label{firstdef}
P(\F)= P_{G}(\F).
\end{equation}
If $\F$  satisfies \ref{a4}  with the strong specification, then  $\limsup$ in (\ref{gurev}) can be replaced by 
$\lim$.
\end{prop}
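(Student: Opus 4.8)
The plan is to prove the equality $P(\F) = P_G(\F)$ by squeezing $Z_n(\F,a)$ between two quantities controlled by the partition functions $Z_n(\F)$, using the weak specification coming from \ref{a4} together with the tempered-variation bounds $M_n$ and the sub-exponential factors $D_{n,m}$. First I would record the elementary consequences: since $(X,\sigma)$ is a countable Markov shift, a periodic point $x$ with $\sigma^n x = x$ and $x \in [a]$ corresponds to an allowable word $a i_2 \dots i_n$ that "loops", i.e. $i_1 i_2 \dots i_n a$ is allowable, and then $f_n(x)$ differs from $\sup\{f_n(y) : y \in [i_1\dots i_n]\}$ by at most the factor $M_n$ (tempered variation). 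Summing over such words gives
\begin{equation*}
Z_n(\F,a) \le M_n \sum_{\substack{i_1\dots i_n \in B_n(X)\\ i_1 = a,\ i_n a \in B_2(X)}} \sup\{f_n(y): y \in [i_1\dots i_n]\} \le M_n\, Z_n(\F).
\end{equation*}
Taking $\limsup (1/n)\log$, using $\lim (1/n)\log M_n = 0$, this already yields $P_G(\F) \le P(\F)$; and since $P(\F) < \infty$ by hypothesis, the Gurevich sums are finite for large $n$, so the definition makes sense.

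For the reverse inequality $P(\F) \le P_G(\F)$, the idea is the standard specification-gluing argument adapted to the sub-additive, non-compact setting. Fix the symbol $a$ and the specification length $p$ from \ref{a4}. Given any word $u = i_1\dots i_n \in B_n(X)$, apply \ref{a4} three times to glue $a$, then $u$, then back to $a$: choose $w_1 \in B_{k_1}(X)$ with $a w_1 u \in B(X)$ and $w_2 \in B_{k_2}(X)$ with $a w_1 u w_2 a \in B(X)$, with $0 \le k_1, k_2 \le p$; this produces an allowable loop of length $n + k_1 + k_2 + 2$ contributing to $Z_{n+k_1+k_2+2}(\F, a)$, and by \ref{a4} (applied twice) the supremum of $f$ over the corresponding cylinder dominates $D_{1,n} D_{n+k_1+1, 1}\, f_1(\cdot|_{[a]})\, \sup\{f_n : [u]\}\, f_1(\cdot|_{[a]})$ up to the factor $M$ relating the periodic-point value to the cylinder supremum. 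Grouping the words $u \in B_n(X)$ by the resulting values $(k_1,k_2) \in \{0,\dots,p\}^2$, at least one pair occurs for a subcollection whose $f$-supremum sum is $\ge (p+1)^{-2} Z_n(\F) \cdot (\text{const depending on } a)$; this gives, for some fixed $j = n + k_1 + k_2 + 2 \in [n+2, n+2p+2]$,
\begin{equation*}
Z_j(\F, a) \ge \frac{c_a}{(p+1)^2}\, D_{1,n}\, D_{n+p+1,1}\, M^{-1} \, Z_n(\F),
\end{equation*}
where $c_a > 0$ depends only on $a$ and $\F$ through $f_1$ on $[a]$. Since $D_{1,n}, D_{n+p+1,1} \to 0$ sub-exponentially and $j = n + O(1)$, taking $\limsup(1/n)\log$ yields $P(\F) \le P_G(\F)$, completing the equality.

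For the final sentence, when \ref{a4} holds with the strong specification we have $k_1 = k_2 = p$ always, so the glued loop has the fixed length $j = n + 2p + 2$; hence $Z_{n+2p+2}(\F,a) \ge c_a'\, D_{1,n} D_{n+p+1,1} M^{-1} Z_n(\F)$ for every $n$, and combined with $Z_n(\F,a) \le M_n Z_n(\F)$ and the fact (from sub-additivity of $\F + C$ and $Z_1(\F) < \infty$, or more precisely from the existence of the limit defining $P(\F)$ under strong specification) that $(1/n)\log Z_n(\F)$ converges, one upgrades the $\limsup$ in \eqref{gurev} to a genuine $\lim$ equal to $P(\F)$.

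The main obstacle I anticipate is bookkeeping the constants correctly in the gluing step: one must be careful that the constant $c_a$ extracted from $f_1$ on the cylinder $[a]$ is genuinely positive and finite (it is, since $f_1 : X \to \R^+$ is continuous and positive, and by tempered variation $\sup\{f_1 : [a]\} / \inf\{f_1:[a]\} \le M_1 < \infty$), and that the pigeonhole over the finitely many gluing lengths $(k_1,k_2)$ is applied to the right subcollection — namely the one maximizing the partial sum of $f$-suprema — so that no more than a constant factor $(p+1)^{-2}$ is lost. A secondary subtlety is making sure the periodic orbit count is not over- or under-counted: distinct words $u$ may produce the same periodic point after gluing only through the (bounded) choice of connecting words, which at worst costs another bounded multiplicative factor that is absorbed into $c_a$. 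Neither difficulty is serious; the argument is a routine, if careful, adaptation of Sarig's and Iommi–Yayama's proofs to the present weak-specification, sub-additive framework.
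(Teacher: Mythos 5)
Your proposal follows essentially the same route as the paper's proof: the inequality $P_G(\F)\le P(\F)$ from $Z_n(\F,a)\le Z_n(\F)$, the gluing $aw_1uw_2a$ via two applications of \ref{a4}, the comparison of the cylinder supremum with the value of $f$ at the periodic point $(aw_1uw_2)^{\infty}$ using tempered variation and sub-additivity, a pigeonhole over the boundedly many glued lengths, and the passage to $\lim$ under strong specification via the existence of $\lim (1/n)\log Z_n(\F)$. The only blemishes are cosmetic: the period of the glued orbit is $n+k_1+k_2+1$ (the terminal $a$ is identified with the initial one), so the relevant indices run over $[n+1,n+2p+1]$ rather than $[n+2,n+2p+2]$, and the correction factor relating the periodic-point value to the cylinder supremum should be $M_{n+2p+2}$ (tempered variation) rather than a uniform $M$ --- neither affects the $\limsup\frac{1}{n}\log$ computation.
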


\begin{proof}
  First we observe that  $P(\F)<\infty$ if and only if $Z_1(\F)<\infty$ (see Proposition \ref{cha}). 
Let $a \in \N$ be fixed and  $c_n:=x_1\dots x_n\in B_n(X)$. By assumption 
\ref{a4} there exist  allowable words $w_1, w_2$ with $0\leq \vert w_1 \vert, \vert w_2\vert \leq p$, such that $aw_1x_1\dots x_nw_2a$ is an allowable word of length $n+2+\vert w_1 \vert + \vert w_2\vert$ satisfying 
\begin{equation*}\label{keyforsub}
\begin{split}
&\sup \{f_{n+2+\vert w_1 \vert + \vert w_2\vert}(x): x\in [aw_1c_nw_2a]\} \\&
\geq 
D_{1,n}D_{1+p+n, 1}\sup \{f_n(x): x\in [c_n]\}(\sup \{f_{1}(x): x\in [a]\})^2.
\end{split}
\end{equation*}
Since $\F$ has tempered variation, for any $x \in [aw_1c_nw_2a]$ we have that
\begin{equation*}
\begin{split}
&\sup\{f_{n+2+\vert w_1 \vert + \vert w_2\vert}(x): x\in [aw_1c_nw_2a]\} \\
&\leq M_{n+2p+2}f_{n+2+\vert w_1 \vert + \vert w_2\vert}(x) \leq M_{n+2p+2}f_{n+1+\vert w_1 \vert + \vert w_2\vert}(x)\sup \{f_{1}(x): x\in [a]\}e^C.
\end{split}
\end{equation*}
Since $\bar{x}=(aw_1c_nw_2)^{\infty}=(aw_1c_nw_2aw_1c_nw_2aw_1c_nw_2\dots)$ is a periodic point with  period $n+\vert w_1 \vert + \vert w_2\vert +1$, 
we obtain

\begin{equation*}
\begin{split}
f_{n+\vert w_1 \vert +\vert w_2 \vert+1}(\bar{x})\geq \frac{D_{1,n}D_{1+p+n, 1}e^{-C}}{M_{n+2p+2}} \sup \{f_{n}(x): x\in [c_n]\}\sup \{f_{1}(x): x\in [a]\}.
\end{split}
\end{equation*}
Note that since $\F$ has tempered variation we have that $\sup \{f_{1}(x): x\in [a]\}$ is bounded. 
Setting $d_n=(D_{1,n}D_{1+p+n, 1}\sup \{f_{1}(x): x\in [a]\})/(e^C M_{n+2p+2})$ and summing over all allowable words  
$c_n=x_1\dots x_n\in B_n(X)$, we obtain 
\begin{equation}\label{ii1}
\sum_{i=n+1}^{n+2p+1}Z_i(\F, a)\geq d_n Z_n(\F)>0.
\end{equation}
Hence, there exists $n+1\leq i_n\leq n+2p+1$ such that $Z_{i_n}(\F, a)\geq (d_n Z_n(\F))/(2p+1)$.
Therefore,
\begin{equation*}
\frac{1}{i_n}\log Z_{i_n}(\F, a)\geq \frac{1}{n+2p+2} \left(\log \frac{1}{2p+1}+\log d_n +\log Z_n(\F) \right).
\end{equation*}
Thus 
\begin{equation}\label{eneq}
\limsup_{n\rightarrow\infty}\frac{1}{i_n}\log Z_{i_n}(\F, a)\geq P(\F).
\end{equation}
Since $Z_{i_n}(\F, a)\leq Z_{i_n}(\F)$ for all $i_n$ and  $a$ is arbitrary, (\ref{eneq}) implies (\ref{firstdef}).

Next we show the second part.
If $\F$ satisfies  \ref{a4}  with the strong specification, we obtain for all $n\in\N$
\begin{equation*}
Z_{n+2p+1}(\F, a)\geq d_n Z_n(\F)>0.
\end{equation*}
Thus similar arguments above imply that 
$$ \limsup_{n\rightarrow\infty}\frac{1}{n}\log Z_n (\F,a)=\liminf_{n\rightarrow\infty}\frac{1}{n}\log Z_n (\F,a).$$
In particular one can take a limit instead of a limsup in the definition of Gurevich pressure.
 \end{proof}


\begin{rem}\label{scase} 
 In Section \ref{vpvp}, we  obtain (\ref{firstdef}) when $Z_1(\F)=\infty$ under certain assumptions on $(X, \sigma)$ and $\F$. 
In Section \ref{sofic},  for a sequence $\F$ on a finitely irreducible countable sofic shift we establish  conditions ensuring  $P(\F)=P_G(\F)$.
\end{rem}

\begin{rem}[Entropy]
A particular case of the definitions considered in Section \ref{potentials} is when the sequence $\F=\{\log f_n\}_{n=1}^{\infty}$ is such that for every $n \in \N$ we have that $\log f_n=0$. In this case we denote $\F=0$. 
The numbers $P(0)$ and $P_G(0)$ are called the \emph{entropy} and the \emph{Gurevich entropy} respectively. It is well known that for a compact irreducible sofic shift  (see Definition \ref{def:sofic}) both notions coincide (see \cite[Theorem 4.3.6]{LM}). 
However, even for topologically mixing countable Markov shifts these two notions can be different, we can have $P_G(0) < P(0)$. In Proposition \ref{gdefi}, fairly general conditions are obtained so that we can still have an equality $P(0)=P_G(0)$ in the non-compact setting. We use the following notation $P(0)=h(\sigma)$ and $P_G(0)=h_G(\sigma)$.
\end{rem}

\subsection{Factor maps} \label{ss:fac}
The goal of this section is to study certain subshifts which are images of countable Markov shifts under factor maps.  
The following class of maps will play an important role in this article.

\begin{defi} \label{def:factor}
Let $(X, \sigma_X)$ and $(Y, \sigma_Y)$ be subshifts on countable alphabets. 
A \emph{one-block code}  is a map $\pi:X\rightarrow Y$ for which there exists a function, denoted again by $\pi$, $\pi:B_1(X)\rightarrow B_1(Y)$ such that 
$(\pi(x))_i=\pi(x_i)$ for all $i\in\N$. For $u=x_1 \dots x_k \in B_k(X)$, $k\in\N$, we denote $\pi(x_1)\dots \pi(x_k)\in B_k(Y)$ by $\pi(u)$. 
A map
$\pi:X\rightarrow Y$ is a \emph{factor map} if it is continuous, surjective and satisfies $\pi \circ \sigma_X = \sigma_Y \circ \pi$. 
For a one-block factor map $\pi:X\rightarrow Y$ where $X$ is an irreducible countable Markov shift,  let $v \in B_k(Y)$. We denote by $\pi^{-1}(v)$ 
the set of allowable words $u$ of length $k$ of $X$ such that $\pi(u)=v$ and
by $\vert  \pi^{-1}(v) \vert $ the cardinality of the set. 
Throughout the paper, we only consider one-block factor maps $\pi: X\rightarrow Y$ such that $\vert  \pi^{-1}(i) \vert <\infty$ for any $i\in\N$. 
Hence for each $k\in \N$,
$v\in B_k(Y)$,  we have
$\vert  \pi^{-1}(v) \vert <\infty$. 

\end{defi}
Next we show that, in some cases, the image of a countable Markov shift under a one-block code is a subshift. In general, the image of a shift space on a countable alphabet under a sliding block code is not closed and hence it is not a subshift (see \cite{LM}). 

\begin{lema} \label{closed}
Let $(X, \sigma_X)$ be a subshift on a countable alphabet and $(\Sigma, \sigma)$ the full shift on a countable  alphabet. 
Let $\pi:X\rightarrow \Sigma$ be a one-block code 
such that $\vert  \pi^{-1}(i) \vert <\infty$ for each $i\in\N$. Let $Y:=\pi(X)$. Then $(Y, \sigma_Y)$ is a subshift  on a countable alphabet.\end{lema}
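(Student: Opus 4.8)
The plan is to check that $Y=\pi(X)$ satisfies the two requirements in the definition of a subshift — $\sigma$-invariance and closedness — and then to observe that its alphabet is infinite. Invariance is immediate: since $\pi$ is a one-block code it commutes with the shift, $\pi\circ\sigma_X=\sigma\circ\pi$, so for $y=\pi(x)\in Y$ we have $\sigma_Y(y)=\sigma(\pi(x))=\pi(\sigma_X(x))\in\pi(X)=Y$, using $\sigma_X(X)\subseteq X$. The only genuinely delicate point is closedness: over a countable alphabet the image of a subshift under a sliding block code need not be closed, and it is precisely the hypothesis $|\pi^{-1}(i)|<\infty$ that rescues the statement.

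For closedness I would argue sequentially. Let $(y^{(k)})_{k\in\N}\subseteq Y$ with $y^{(k)}\to y\in\Sigma$, and choose $x^{(k)}\in X$ with $\pi(x^{(k)})=y^{(k)}$. Fix a coordinate $i$; for all large $k$ the points $y^{(k)}$ and $y$ agree on the first $i$ coordinates, hence $(x^{(k)})_i\in\pi^{-1}(y_i)$, which is a finite set by assumption. A standard diagonal extraction then yields a subsequence $(x^{(k_j)})_j$ and a point $x=(x_i)_{i\in\N}$ such that, for each $n$, $(x^{(k_j)})_i=x_i$ for all $1\le i\le n$ once $j$ is large; that is, $x^{(k_j)}\to x$. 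Since $X$ is closed, $x\in X$. Finally, for every $i$ and all large $j$ we have $\pi(x)_i=\pi(x_i)=\pi\big((x^{(k_j)})_i\big)=(y^{(k_j)})_i=y_i$, so $\pi(x)=y$ and $y\in\pi(X)=Y$. Thus $Y$ is closed and $(Y,\sigma_Y)$ is a subshift.

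To see that the alphabet of $Y$ is genuinely countable, note that $B_1(Y)=\pi(B_1(X))$ while $B_1(X)=\bigcup_{j\in B_1(Y)}\pi^{-1}(j)$ is a union, indexed by $B_1(Y)$, of finite sets; since $B_1(X)$ is infinite, $B_1(Y)$ must also be infinite, so $Y$ is not contained in the full shift on a finite alphabet. The heart of the matter, and the main obstacle, is the closedness step: the diagonal extraction succeeds only because every fiber $\pi^{-1}(i)$ is finite, and without this assumption the conclusion genuinely fails.
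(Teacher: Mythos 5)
Your proof is correct and follows essentially the same route as the paper's: invariance is immediate from the one-block structure, and closedness is obtained by lifting a convergent sequence of $Y$ to preimages in $X$, using the finiteness of each fiber $\pi^{-1}(i)$ to run a diagonal subsequence extraction, and then pushing the limit point back down with $\pi$. Your write-up is a cleaner rendering of the paper's (more laboriously notated) diagonal argument, and the added observation about the alphabet of $Y$ being infinite is a harmless extra.
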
 
\begin{proof}
It is easy to see that $Y$ is invariant and we show that $Y$ is closed. For $m\in\N$, let $y^{(m)}=\{y_n^{(m)}\}_{n=1}^{\infty}\in Y$. Let  $\{y^{(m)}\}_{m=1}^{\infty}$ be a sequence in $Y$ converging to $y=\{y_i\}_{i=1}^{\infty}$.  We show that $y\in Y$. Since $Y$ is the image of $X$ under $\pi$, for each 
$m\in \N$, we can pick an $x^{(m)}\in X$ such that  $\pi(x^{(m)})=y^{(m)}$
and let  $x^{(m)}=\{x_n^{(m)}\}_{n=1}^{\infty}$. Fix $l\in\N$. Since $\{y^{(m)}\}_{m=1}^{\infty}$ converges to $y\in Y$, there exists $M\in \N$ such that $d(y^{(m)}, y)<1/2^l$ for all $m\geq M$. Then  we have  $y_i^{(m)}=y_i$ for all $m\geq M$, $1\leq i\leq l+1. $
Note that $ \pi^{-1}(y_i^{(M)})$ is a finite set for each $1\leq i\leq l+1$.
Consider  the sequence $\{x^{(m)}\}_{m=M}^{\infty}$. 
Then we have $x_{i}^{(m)}\in \pi^{-1}(y_i^{(M)})$ for $1\leq i\leq l+1, m\geq M$. Since there are finitely many symbols in $ \pi^{-1}(y_1^{(M)})$, there exists $x_1^{*} \in \pi^{-1}(y_1^{(M)})$ such that $x_1^{*}$  is the initial symbol of $x^{(m)}$, for infinitely many $m\geq M$. Now we extract a subsequence 
$\{x^{1, n}\}_{n=1}^{\infty}$ of sequences with the initial symbol $x_1^{*}$ from $\{x^{(m)}\}_{m=M}^{\infty}$. Define $\{x^{0, n}\}_{n=1}^{\infty}:=\{x^{(m)}\}_{m=M}^{\infty}$.
Repeating this process, for each $ 1\leq i\leq l+1$, there exist $x_i^{*}\in \pi^{-1}(y_i^{(M)})$ and 
a sequence $\{x^{i, n}\}_{n=1}^{\infty}$ of sequences with the $i$ th symbol $x_i^{*}$ such that $\{x^{i, n}\}_{n=1}^{\infty}$ is a subsequence of $\{x^{i-1, n}\}_{n=1}^{\infty}$. 
We define  $x_i^{*}$ for $i=l+i$,$i\geq 2$ similarly.
Given $l+1$, there exists $M_1$ such that $d(y^{(m)}, y)<1/2^{l+1}$ for all $m\geq M_1$. Then  we have  $y_i^{(m)}=y_i$ for all $m\geq M_1$, $1\leq i\leq l+2.$
Consider the sequence $\{z^{l+1, n}\}_{n=1}^{\infty}:=\{x^{l+1, n}\}_{n=1}^{\infty} \cap \{x^{m}\}_{m=M_1}^{\infty}$.   Since there are finitely many symbols in $ \pi^{-1}(y_{l+2}^{(M_1)})$, there exists $x_{l+2}^{*}\in \pi^{-1}(y_{l+2}^{(M_1)})$ such that $x_{l+2}^{*}$  is the $(2+l)$ th symbol of  $\{z^{l+1, n}\}_{n=1}^{\infty}$ for infinitely many $n$. Now we extract a subsequence $\{x^{l+2, n}\}_{n=1}^{\infty}$ of  sequences with the $(2+l)$ th symbol $x_{l+2}^{*}$ from $\{z^{l+1, n}\}_{n=1}^{\infty}$.
Since $\vert  \pi^{-1}(k) \vert <\infty$ for each $k\in\N$, by repeating this process, for each $i\geq 2$ there exist
$x_{l+i}^{*}\in \pi^{-1} (y_{l+i})$ and  a sequence $\{x^{l+i, n}\}_{n=1}^{\infty}$ with the $i$ th symbol $x_i^{*}$, each of which is a subsequence of $\{x^{l+i-1, n}\}_{n=1}^{\infty}$.
Define $x^{*}=\{x_i^{*}\}_{i=1}^{\infty}$.
By a diagonal argument, it is clear that the sequence $\{x^{l+i, i}\}_{i=1}^{\infty}$
converges to $x^{*}$. Since  $X$ is closed, we obtain that $x^{*}\in X$. Then $\pi (x^{*})=\{\pi(x_i^{*})\}_{i=1}^{\infty}=\{y_i\}_{i=1}^{\infty}=y$. Hence $Y$ is closed.

\end{proof}

In Lemma \ref{closed}, if $X$ is a countable Markov shift, then $\pi: X\rightarrow Y$ is a one-block factor map. Hence
we find a  class of subshifts which generalize countable Markov shifts.  Recall that if $(X, \sigma)$ is a finite state Markov shift, then the image of $X$ under a one-block 
factor map is a sofic shift \cite{LM, BP}. In the following, we generalize this definition to the case when $(X, \sigma)$ is a countable Markov shift.

\begin{defi} \label{def:sofic}
A \emph{countable sofic shift} is a subshift on a countable alphabet which is the image of a countable Markov shift under a one-block factor map $\pi$ such that $\vert \pi^{-1}(i)\vert<\infty$ for each $i\in \N$. In particular, an \emph{irreducible countable sofic shift} is the image of an irreducible countable Markov shift.

\end{defi}

\begin{rem}\label{remsofic}
Note that an irreducible subshift is defined in Definition \ref{def:irr}. In Definition \ref{def:sofic}, in order for $Y$ to be an irreducible countable sofic shift,  we  additionally assume that  it is an image of an irreducible countable Markov shift.   
\end{rem}

It is well known that if $X$ and $Y$ are subshifts on finite alphabets such that there exists a factor map $\pi:X \rightarrow Y$, then $h(X) \geq h(Y)$. In the non-compact case, this is in general not true (see the discussion in   \cite[Section 13.9]{LM} ). However, the next lemma shows that under suitable assumptions this property still holds.


\begin{lema}
Let  $(X, \sigma_X)$ and  $(Y, \sigma_Y)$ be 
topologically mixing countable Markov shifts and $\pi: X\to Y$ a one-block factor map such that 
$|\pi^{-1}(n)|< \infty$  for every $n \in \N$. Then $h(\sigma_X) \geq h(\sigma_Y)$.
\end{lema}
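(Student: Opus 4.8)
The plan is to bypass the thermodynamic machinery entirely and compare word counts directly. Taking $\F=0$ (so $f_n\equiv 1$) in Definition~\ref{defpressure} gives $Z_n(0)=|B_n(X)|$, hence $h(\sigma_X)=P(0)=\limsup_{n\to\infty}\tfrac1n\log|B_n(X)|$, and likewise for $Y$; since $|B_n(\cdot)|\ge 1$ these limsups always exist in $[0,+\infty]$, so both entropies are well defined. Thus it suffices to prove $|B_n(X)|\ge|B_n(Y)|$ for every $n\in\N$.

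The key point is that, since $\pi$ is a \emph{one-block} factor map, it induces for each $n$ a surjection $B_n(X)\to B_n(Y)$, $u\mapsto\pi(u)$. This map is well defined: if $u=x_1\dots x_n\in B_n(X)$, choose $x\in X$ with $x_i=u_i$ for $1\le i\le n$; then $\pi(x)\in Y$ has $\pi(u)$ as its length-$n$ initial block, so $\pi(u)\in B_n(Y)$. It is onto because $\pi\colon X\to Y$ is surjective: given $v\in B_n(Y)$, pick $y\in Y$ with $y_i=v_i$ for $1\le i\le n$ and then $x\in X$ with $\pi(x)=y$; now $x_1\dots x_n\in B_n(X)$ and $\pi(x_1\dots x_n)=v$. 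Hence $|B_n(X)|\ge|B_n(Y)|$.

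It follows that $\tfrac1n\log Z_n(0)$ computed in $X$ dominates, term by term and as an inequality of extended reals, the same quantity computed in $Y$; passing to the $\limsup$ gives $h(\sigma_X)\ge h(\sigma_Y)$. The only step carrying any content is the surjectivity of $\pi$ at the level of blocks, and this is immediate from the one-block property together with surjectivity of $\pi$ as a map of spaces; this is the ``main obstacle'' only in the sense that it is the sole place where the hypotheses are actually invoked. I expect that the finiteness of the point-preimages and topological mixing are not needed for this inequality itself — the former serves (via Lemma~\ref{closed}) only to ensure that $Y$ is a subshift and that we remain within the class considered in this section, while the latter is part of the running framework.
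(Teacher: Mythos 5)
Your proof is correct for the statement as written, but it takes a genuinely different route from the paper. The paper does not count words at all: it invokes the approximation property of the Gurevich entropy by compact invariant subsets, $h_G(\sigma_X)=\sup\{h(\sigma_X|_{\Sigma_K})\}$ over topologically mixing finite Markov subshifts, observes that finiteness of the symbol fibers makes $\pi^{-1}(\Sigma_K)$ a compact subshift of $X$ for each such $\Sigma_K\subset Y$, and then applies the compact-case inequality (Kitchens, Proposition 4.16) before taking suprema. Your argument instead works directly with $Z_n(0)=|B_n(\cdot)|$ and the block-level surjection $B_n(X)\to B_n(Y)$ induced by the one-block map, which is elementary, needs neither topological mixing nor finiteness of $|\pi^{-1}(i)|$, and matches the paper's own convention $h(\sigma)=P(0)=\limsup\frac1n\log|B_n|$. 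The one substantive difference to flag: the paper's argument really establishes the inequality for the \emph{Gurevich} entropy $h_G=P_G(0)$, and the paper itself points out that $P_G(0)<P(0)$ is possible even for topologically mixing countable Markov shifts; so the two proofs are not proving literally the same inequality. Yours proves the statement with $h=P(0)$ exactly as displayed, while the paper's proves the $h_G$ version; each is the natural one for its respective notion of entropy, and the compact-approximation route is what generalizes to the nontrivial pressures treated later (Proposition \ref{approx1}), whereas your counting argument is specific to $\F=0$.
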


\begin{proof}
Recall that the Gurevich entropy  satisfies the following approximation property by compact sets \cite{gu1, gu2} 
\begin{align*}
h_G(\sigma_X)&= \sup \{ h(\sigma_X |_{K}) : K \subset X \text{ compact and invariant} \} \\
&=\sup \{ h(\sigma_X |_{\Sigma_K}) : \Sigma_K \subset X \text{ topologically mixing finite Markov shift} \}. 
\end{align*}
Since for every $n \in \N$ we have that   $|\pi^{-1}(n)|< \infty$, for every $\Sigma_K \subset Y$  topologically  mixing finite Markov shift we have that $\pi^{-1}(\Sigma_K)$ is a compact subshift of $X$. Therefore, by \cite[Proposition 4.16]{kit} we have that 
\begin{equation*}
h_G(\sigma_X |_{ \pi^{-1}(\Sigma_K)}) \geq h_G(\sigma_Y |_{ \Sigma_K}).
\end{equation*}
The result now follows.
\end{proof}



\section{Examples}\label{sectionexample}
In this section,  we study the types of sequences introduced in \ref{potentials} and present some examples.

\subsection{Differences between the \ref{a1} condition and almost-additivity} \label{differ}
This section is devoted to study the relations and differences between the additivity assumptions we have considered. That is, we establish relations between almost-additivity and conditions \ref{a1} and \ref{a4} introduced in Section \ref{potentials}. The results depend upon the combinatorial structure of the shifts. 
\begin{rem} \label{r:fin}
If $(X, \sigma)$ is an irreducible Markov shift defined on a finite alphabet (compact), then any almost-additive Bowen sequence on $X$ satisfies \ref{a1}. 
\end{rem}

Next lemma shows that the result in Remark \ref{r:fin} also holds for a finitely irreducible subshift on a countable alphabet. Even more, under weaker regularity assumptions it is possible to prove that an almost-additive sequence satisfies condition \ref{a4}.

\begin{lema}\label{d}
Let $(X, \sigma)$ be a finitely irreducible subshift on a countable alphabet and $\mathcal{G}=\{\log g_n\}_{n=1}^{\infty}$ an almost-additive sequence on $X$ with tempered variation. Then $\mathcal{G}$  satisfies \ref{a0}, \ref{a4} and \ref{a5}. 
If $\mathcal{G}$ is an almost-additive Bowen sequence on $X$, then it satisfies \ref{a0},  \ref{a1} and \ref{a3}. 
\end{lema}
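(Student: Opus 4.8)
The plan is to verify each of the claimed conditions directly from the definitions, starting with the easiest and using the finite irreducibility of $(X,\sigma)$ to supply the connecting words. First I would record the set-up: since $\mathcal{G}$ is almost-additive, there is a constant $C\geq 0$ with $g_{n+m}(x)\leq g_n(x)g_m(\sigma^n x)e^C$ and $g_n(x)g_m(\sigma^n x)e^{-C}\leq g_{n+m}(x)$ for all $n,m,x$; the first inequality says exactly that $\mathcal{G}+C$ is sub-additive, which is condition \ref{a0}. Since $(X,\sigma)$ is finitely irreducible, fix $p\in\N$ and a finite set $W_1\subset\bigcup_{k=0}^pB_k(X)$ such that for every $u,v\in B(X)$ there is $w\in W_1$ with $uwv\in B(X)$; this $p$ and a suitable $W\subseteq W_1$ will be the ones appearing in \ref{a4}/\ref{a5} (resp. \ref{a1}/\ref{a3}).

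Next I would establish the key multiplicative estimate. Fix $u\in B_n(X)$, $v\in B_m(X)$ and pick $w\in W_1$, $k=|w|\leq p$, with $uwv\in B(X)$. Choose any point $x\in[uwv]$. Applying almost-additivity twice (splitting $n+m+k$ as $n$, then $k$, then $m$, or grouping as $(n+k)+m$ and $n+k$ and using tempered variation to pass between $g_{n+k}$ on $[uw]$ and $g_n$ on $[u]$) yields a lower bound of the form
\begin{equation*}
g_{n+m+k}(x)\;\geq\; e^{-2C}\,\frac{1}{M_{n+k}M_{?}}\;\bigl(\sup_{[u]}g_n\bigr)\bigl(\sup_{[v]}g_m\bigr)\,\inf_{w}\{\text{terms involving }g_k\text{ on }[w]\},
\end{equation*}
where the $M_j$ are the tempered-variation bounds from Definition \ref{bowen} and the infimum over the finitely many $w\in W_1$ of the $g_{|w|}$-values is a strictly positive quantity (positivity because each $f_n$, here $g_n$, maps into $\R^+$ and $W_1$ is finite, so only finitely many cylinders $[w]$ are involved). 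Taking the supremum over $x\in[uwv]$ on the left only helps. This gives \ref{a4} with $D_{n,m}$ equal to a constant times $1/(M_{n+k}M_{n})$ or similar; since $\mathcal{G}$ has tempered variation, $(1/j)\log M_j\to 0$, hence $(1/n)\log D_{n,m}\to 0$ and likewise in $m$, and one may replace $D_{n,m}$ by its monotone-in-each-variable minorant as the statement allows. The finite set $W$ witnessing \ref{a4} is then $W_1$ itself (or the subset of words that actually get used), giving \ref{a5}.

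For the second assertion, if $\mathcal{G}$ is in addition a Bowen sequence, then $M_j\leq M$ for all $j$ with $M$ a fixed constant, so the bound above becomes $g_{n+m+k}(x)\geq D\,(\sup_{[u]}g_n)(\sup_{[v]}g_m)$ with a single constant $D=e^{-2C}M^{-2}\min_{w\in W_1}\inf_{[w]}g_{|w|}>0$ independent of $n,m,u,v$, which is precisely \ref{a1}, and again $W=W_1$ gives \ref{a3}. Condition \ref{a0} is already noted.

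The main obstacle I anticipate is purely bookkeeping: correctly handling the connecting word $w$ when its length $k$ can be $0$ (so that $uwv=uv$ and no $g_k$ term appears, cf. Remark \ref{simplification}), and making sure the tempered-variation factors are attached to the right blocks when passing between $\sup_{[u]}g_n$, $\sup_{[uw]}g_{n+k}$, and the value of $g_{n+k+m}$ at a chosen point of $[uwv]$. One clean way is: first use almost-additivity in the form $g_{n+k+m}(x)\geq e^{-2C}g_n(x)g_k(\sigma^n x)g_m(\sigma^{n+k}x)$ at a fixed $x\in[uwv]$, then bound $g_n(x)\geq M_n^{-1}\sup_{[u]}g_n$, $g_m(\sigma^{n+k}x)\geq M_m^{-1}\sup_{[v]}g_m$ (valid because $\sigma^{n+k}x\in[v]$), and absorb $g_k(\sigma^n x)$ together with the finitely many possibilities for $w$ into the constant. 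Everything else is routine once this estimate is in place.
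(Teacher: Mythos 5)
Your proposal is correct and follows essentially the same route as the paper's proof: fix the finite set $W_1$ from finite irreducibility, apply almost-additivity to split $g_{n+m+k}$ into the three blocks, convert $g_n(x)$ and $g_m(\sigma^{n+k}x)$ to sups over $[u]$ and $[v]$ at the cost of $M_n^{-1}$ and $M_m^{-1}$, and lower-bound the middle factor $g_k(\sigma^n x)$ by a uniform positive constant using finiteness of $W_1$ and finiteness of $M_k$ for $k\le p$ (the paper's $Q_1/M_p$), with the $k=0$ case handled directly. The clean version in your final paragraph is exactly the paper's estimate, yielding $D_{n,m}=e^{-C}Q/(M_pM_nM_m)$ in \ref{a4} and the single constant $D$ in \ref{a1} when the sequence is Bowen.
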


\begin{proof}
Since $(X, \sigma)$ is a finitely irreducible subshift on a countable alphabet, there exist $p \in\N$ and a finite set $W_1 \subset \bigcup_{i=0}^{p} B_i(X)$ such that for any  $n,m\in\N$  and  $u\in B_{n}(X), v \in B_{m}(X)$ there exists $ w \in W_1$ such that $uwv$ is an allowable word. 
 Since $W_1$ is a finite set and $\mathcal{G}$ has tempered variation, there exists $Q_1>0$ such that 
 \begin{equation*}
\sup_{w\in W_1, \vert w\vert \geq 1} \left\{g_{\vert w \vert}(y): y \in [w] \right\}>Q_1.
\end{equation*}
 For $n\in\N$, let $M_n$ is defined as in Definition \ref{bowen}. Let $x\in [uwv]$, where $\vert w\vert=k\geq 1$. Then
\begin{equation}\label{p2}
 g_{n+m+k}(x)\geq e^{-C}g_{n}(x)g_{k}(\sigma^{n}x)g_{m}(\sigma^{k+n}x) \geq \frac{e^{-C}Q_1}{M_p}g_{n}(x)g_{m}(\sigma^{k+n}x).
 \end{equation}
 Now consider a pair $u\in B_n(X), v\in B_m(X)$ such that $uv$ is an allowable word. If $x\in [uv]$, then  we obtain $g_{n+m}(x)\geq e^{-C}g_{n}(x)g_{m}(\sigma^{n}x)$. Let $Q=\min \{Q_1, 1\}$.  Then \ref{a4} holds in particular for $p$ equal to the same $p$ that appears in the specification property and we obtain the result by setting $D_{n,m}=(e^{-C}Q)/(M_p M_nM_m)$ in  \ref{a4} and $W=W_1$ in \ref{a5}.  If the sequence $\mathcal{G}$ is an almost-additive Bowen sequence, the same argument   
 replacing $M_p,  M_n$ and  $M_m$ by $M$ yields the desired result.
 \end{proof}
 

\begin{lema}\label{variation}
Let $(X, \sigma)$ be a subshift on a countable alphabet,  $\G=\{\log g_n\}_{n=1}^{\infty}$ an almost-additive sequence on $X$ with tempered variation, and 
 $\F=\{\log f_n\}_{n=1}^{\infty}$ a sequence on $X$ satisfying \ref{a0},\ref{a4} and \ref{a5}.  Define $\H:=\{\log (f_n/g_n) \}_{n=1}^{\infty}$.
 Then $\H$ satisfies  \ref{a0},\ref{a4} and \ref{a5}.  
 \end{lema}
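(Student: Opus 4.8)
The plan is to verify each of the three conditions \ref{a0}, \ref{a4}, \ref{a5} for the sequence $\H=\{\log(f_n/g_n)\}_{n=1}^\infty$ by combining the corresponding properties of $\F$ with the almost-additivity and tempered variation of $\G$, using that these bounds \emph{invert} cleanly because $\G$ satisfies two-sided estimates. First I would observe that $\H$ has tempered variation: since $f_n/g_n$ satisfies $(f_n/g_n)(x)/(f_n/g_n)(y) \le M_n^{\F} M_n^{\G}$ for $x,y$ agreeing on the first $n$ coordinates (using the Bowen/tempered bound on $f_n$ and the reciprocal bound on $g_n$), and the product of two tempered-variation sequences is again tempered, $\H$ is a well-defined sequence of the type considered in Section \ref{potentials}.

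For \ref{a0}, I would show $\H+C'$ is sub-additive for a suitable $C'$. Writing $h_n=f_n/g_n$, we have $h_{n+m}(x) = f_{n+m}(x)/g_{n+m}(x)$. Apply sub-additivity of $\F+C$ (so $f_{n+m}(x)\le e^C f_n(x) f_m(\sigma^n x)$) in the numerator, and the lower almost-additivity bound \eqref{A1} for $\G$ (so $g_{n+m}(x)\ge e^{-C_\G} g_n(x) g_m(\sigma^n x)$) in the denominator. This yields $h_{n+m}(x)\le e^{C+C_\G} h_n(x) h_m(\sigma^n x)$, i.e. $\H+(C+C_\G)$ is sub-additive. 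This is the key structural point: dividing by an almost-additive sequence preserves sub-additivity because the almost-additive sequence can be bounded from below as well as above.

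For \ref{a4} and \ref{a5}, I would take the same $p$ and the same finite witness set $W$ that work for $\F$ (these exist by hypothesis). Fix $u\in B_n(X)$, $v\in B_m(X)$; there is $w\in W$, $|w|=k\le p$, with $\sup_{[uwv]} f_{n+m+k} \ge D_{n,m}^{\F}\,\sup_{[u]}f_n\,\sup_{[v]}f_m$. Now I need to pass to $h$; the obstacle here is that the supremum of a quotient is not the quotient of the suprema, so I must use tempered variation of $\G$ to control $\sup$ and $\inf$ of $g_j$ on a cylinder of length $j$ by each other up to the factor $M_j^{\G}$, and similarly almost-additivity of $\G$ to relate $\sup_{[uwv]} g_{n+m+k}$ to $\sup_{[u]}g_n\cdot\sup_{[w]}g_k\cdot\sup_{[v]}g_m$ up to constants depending only on $C_\G$, $M_p^{\G}$, and on the finite set $W$ (which contributes a uniform positive lower bound on $g_k$ over $[w]$, $w\in W$, as in the proof of Lemma \ref{d}). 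Combining all of this, one gets $\sup_{[uwv]} h_{n+m+k}\ge \widetilde D_{n,m}\,\sup_{[u]}h_n\,\sup_{[v]}h_m$ where $\widetilde D_{n,m}$ is $D_{n,m}^{\F}$ times a product of factors of the form $(M_j^{\G})^{\pm1}$ with $j\in\{n,m,n+m+k\}$ and a constant. Since each $M_j^{\G}$ is sub-exponential in $j$ and $D_{n,m}^{\F}$ already satisfies $\lim_n (1/n)\log D_{n,m}^{\F}=\lim_m(1/m)\log D_{n,m}^{\F}=0$, the same holds for $\widetilde D_{n,m}$; after replacing $\widetilde D_{n,m}$ by a monotone minorant we obtain \ref{a4}, with the witness set $W$ unchanged, giving \ref{a5}. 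The main bookkeeping obstacle is simply keeping track of which cylinder length indexes each variation constant $M_j^{\G}$ and checking that the resulting double sequence is still sub-exponential in both arguments; there is no conceptual difficulty beyond the two-sided control furnished by almost-additivity.
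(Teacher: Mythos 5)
Your proposal is correct and follows exactly the route the paper intends (its proof of Lemma \ref{variation} is a one-line reference to the method of Lemma \ref{d}): sub-additivity of $\H$ up to a constant comes from dividing the upper bound for $f_{n+m}$ by the lower almost-additive bound for $g_{n+m}$, and condition \ref{a4} for $\H$ is obtained with the same $p$ and the same witness set $W$ as for $\F$, at the cost of multiplying $D_{n,m}$ by $(M_nM_m)^{-1}$ (variation constants of $\G$) and a constant from $W$, which remains sub-exponential in each variable. One small correction of direction: since $g_n$ sits in the \emph{denominator} of $h_n=f_n/g_n$, what you need from the finite set $W$ is a uniform \emph{upper} bound on $\sup\{g_{|w|}(y):y\in[w]\}$ over $w\in W$ (so as to bound $\sup\{g_{n+m+k}(x):x\in[uwv]\}$ from above by $e^{2C_{\G}}Q\,\sup_{[u]}g_n\,\sup_{[v]}g_m$), rather than the lower bound $Q_1$ used in Lemma \ref{d}; this upper bound exists because $W$ is finite and each $M_{|w|}<\infty$, so the argument goes through unchanged.
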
 
\begin{proof}
The proof is straightforward. We use the similar approach as in Lemma \ref{d}.
\end{proof}

 \begin{eje}\label{basic1}\textnormal{\textbf{A continuous function on a finitely irreducible subshift  with tempered variation.}}
In this example, we show that the the formalism developed in this article generalizes results concerning continuous potentials satisfying mild regularity assumptions. Let $f$ be a continuous function  defined on a finitely irreducible subshift $X$. Denote by 
\begin{equation*}
A_n:=\sup \left\{\vert (S_nf)(x)-(S_nf)(y)\vert: x_i=y_i, 1\leq i\leq n \right\}.
\end{equation*}
We say that $f$ has \emph{tempered variation} if  $A_n<\infty$ for all $n\in \N$ and $\lim_{n\rightarrow\infty}(1/n) A_n=0$.
We remark that sometimes (see for example \cite{ffy}) the definition of tempered variation is given without the finiteness assumption $A_n < \infty$. We stress that in this paper we always do assume finiteness. 

Let $f$ be a continuous function on a finitely  irreducible subshift $X$ with tempered variation. Following the procedure described in Remark \ref{rem:con_bow}, for each $n\in \N$, define $f_n(x)=e^{(S_nf)(x)}$ and $\F=\{\log f_n\}_{n=1}^{\infty}$. The sequence $\F$ is additive. Moreover,  by  Lemma \ref{d}, $\F$ satisfies 
\ref{a4} and \ref{a5}.
\end{eje}

\begin{eje} \textnormal{\textbf{An almost-additive sequence on a countable Markov shift which does not satisfy \ref{a1}.}}\label{dif}
Let $A$ be a transition matrix on a countable alphabet defined by 
\begin{equation*}
   \mathbf{A} = \left(
     \begin{array}{cccccccc}
       1 & 0 & 0 & 1 & 0 & 0  & 1 & \ldots \\
       0 & 1 & 1 & 1 & 0 & 0  & 0 & \ldots \\
       0 & 1 & 1 & 1 & 0 & 0  & 0  & \dots \\
       1 & 1 & 1 & 1 & 0 & 0  & 0 & \dots \\
        0 & 0 & 0 & 0 & 1 & 1  & 1 &\dots \\
       0 & 0 & 0 & 0 & 1 & 1  & 1 &\dots \\
       1 & 0 & 0 & 0 & 1 & 1  & 1  & \dots \\
     \vdots & \vdots & \vdots& \vdots & \vdots & \vdots  & \vdots & \ddots         
      \end{array} \right)
 \end{equation*}
 and consider the countable Markov shift $(X, \sigma)$  determined by $A$ (see Figure \ref{fig0}).  Let $\{\lambda_n\}_{n=1}^{\infty}$ be a sequence of real numbers such that 
$\lambda_n \in (0,1)$ and $\sum_{j=1}^{\infty} \lambda_j < \infty$. Let   $\{\log c_n\}_{n=1}^{\infty}$ be an almost-additive  sequence of real numbers, that is, there exists a constant $C>0$ such that  
\[ e^{-C}c_n c_m \leq  c_{n+m} \leq  e^Cc_n c_m.\]
For $n\in \N$, define $g_n:\Sigma \rightarrow \R$ by  
\begin{equation*}
 g_n(x)=  c_n \lambda_{i_1} \lambda_{i_1} \cdots \lambda_{i_{n}}, \text{ for } x \in[ i_1 \dots i_n],
\end{equation*}
and let $\mathcal{G}=\{ \log g_n \}_{n=1}^{\infty}$. These sequences have been studied in \cite[Example 1]{iy1} when defined on the full shift.

 \begin{tikzpicture}[shorten >=1pt,node distance=1.5cm,on grid,auto]
   \node[state] (0) {$1$};
   \node[state] (1) [right=of 0] {$2$};
   \node[state] (2) [right=of 1] {$3$};
   \node[state] (3) [right=of 2] {$4$};
   \node[state] (4) [right=of 3] {$5$} ;
  \node[state] (5) [right=of 4] {$6$} ;
 \node [state] (6) [right=of 5] {$7$} ;
\node (7) [right=of 6] {$\cdots$} ;
\node (8) [right=of 6] {} ;
\node (9) [right=of 6] {} ;


   \path[->]
    (0) edge   [loop below]   (0) 
    (0) edge  [bend left]   (3) 
    (0) edge  [bend left]   (6)
    (0) edge  [bend left]   (9)
    (1) edge                 (2)
    (1) edge [loop below]    (1)
    (1) edge  [bend left]   (3)
    
    (2) edge [loop below]    (2)
    (2)  edge    (1)
    (2)  edge    (3)
    
    (3)  edge    [bend right]     (0)
     (3) edge [loop below]    (3)
     (3)  edge                 (2)
    (3)  edge    [bend right]     (1)
    
    (4) edge                 (5)
    (4) edge [loop below]    (4)
   (4) edge  [bend left]   (6)

    (5) edge [loop below]    (5)
    (5)  edge    (4)
    (5)  edge    (6)
    
    (6)  edge    [bend right]     (0)
     (6) edge [loop below]    (6)
     (6)  edge                 (5)
    (6)  edge    [bend right]     (4)

     (9)  edge    [bend right]     (0)
      ;
\end{tikzpicture}
\begin{figure}[h]
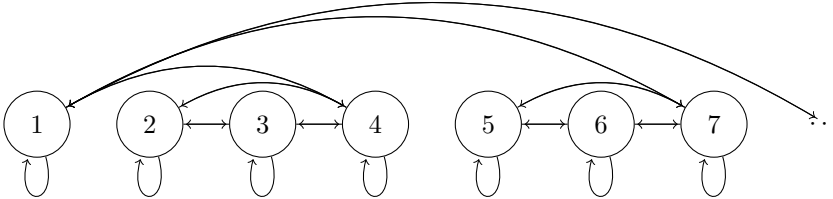

{} \caption{The graph defining  $X$ in Example \ref{dif}}
\label{fig0}
\end{figure}
\end{eje}

\begin{lema}
The sequence $\mathcal{G}=\{\log g_n\}_{n=1}^{\infty}$ defined on $X$ is an almost-additive Bowen sequence. However, it does not satisfy \ref{a1}. 
\end{lema}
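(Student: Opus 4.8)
The plan is to verify each claimed property directly from the definition of $g_n$, exploiting the product structure $g_n(x) = c_n \lambda_{i_1}\cdots\lambda_{i_n}$ on a cylinder $[i_1\dots i_n]$, and then to locate the specific combinatorial obstruction in the graph of $X$ that rules out \ref{a1}.

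First I would check almost-additivity. Since $\{\log c_n\}$ is almost-additive with constant $C$, and $\lambda_{i_1}\cdots\lambda_{i_{n+m}} = (\lambda_{i_1}\cdots\lambda_{i_n})(\lambda_{i_{n+1}}\cdots\lambda_{i_{n+m}})$ splits exactly as $g_n$ evaluated on the first block times $g_m$ evaluated on the shifted block, one gets $e^{-C} g_n(x) g_m(\sigma^n x) \le g_{n+m}(x) \le e^C g_n(x) g_m(\sigma^n x)$ for all $x$, so $\mathcal{G}$ is almost-additive with the same constant $C$. Next, the Bowen property: $g_n(x)/g_n(y)$ for $x,y$ in the same $n$-cylinder equals $1$ since $c_n$ depends only on $n$ and $\lambda_{i_1}\cdots\lambda_{i_n}$ depends only on the first $n$ coordinates, which $x$ and $y$ share. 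Hence $M_n = 1$ for all $n$, so $\sup_n M_n \le M := 1$ and $\mathcal{G}$ is a Bowen sequence (so in particular it has tempered variation).

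The heart of the lemma is that \ref{a1} fails. Condition \ref{a1} would require a uniform $p\in\N$ and a uniform $D>0$ such that any two allowable words $u,v$ can be bridged by some $w$ with $|w|\le p$ satisfying a multiplicative lower bound on $\sup f_{n+m+|w|}$ over $[uwv]$. The idea is to exhibit a family of pairs $u,v$ for which no short connecting word exists at all — that is, to use the reducibility of the graph. Inspecting the matrix $\mathbf{A}$: the symbol $1$ connects to the ``first block'' $\{2,3,4\}$ and also jumps to $\{7,8,\dots\}$; the symbol $4$ returns to $1$; the block $\{5,6,7\}$ similarly cycles, with $7$ returning to $1$, and $1$ reaching $\{5,6,7\}$ only through those direct arrows $1\to 4k+3$. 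The key point is that to pass from a word $v$ ending deep inside one ``block'' (say starting at symbol $3k+2$ for large $k$) to a word $u$ — or vice versa — one is forced to route through the hub symbol $1$, and the number of steps needed to reach $1$ from symbol $m$ grows without bound as $m\to\infty$ (there is no uniform bound on the distance in the graph from an arbitrary vertex back to the vertex $1$). Thus for any candidate $p$ I would pick symbols $u=[a]$, $v=[b]$ with $a,b$ chosen so large that every path from $b$ to $a$ in the graph has length exceeding $p$; then no $w$ with $|w|\le p$ makes $uwv$ allowable, so \ref{a1} cannot hold with that $p$, and since $p$ was arbitrary, \ref{a1} fails outright.

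The main obstacle I anticipate is purely in pinning down the graph combinatorics precisely from the displayed (necessarily truncated) matrix: I need to be sure which vertices are reachable from which in a bounded number of steps, and in particular to confirm that there is genuinely no uniform bound on the return-time to the hub vertex $1$ (equivalently, that $X$ is irreducible but not finitely irreducible, consistent with the earlier remark that \ref{a1} would force the weak specification property). Once the graph structure is read off correctly, the argument is a short pigeonhole-style contradiction; the almost-additive and Bowen parts are routine given the explicit product formula for $g_n$. I would also double-check that the family of bad pairs can be taken to be single symbols (length-one words), which makes the failure of \ref{a1} as transparent as possible and sidesteps any need to track the $f_{n+m+|w|}$ suprema at all, since the issue is non-existence of the connecting word rather than a bad constant.
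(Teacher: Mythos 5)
Your verification that $\mathcal{G}$ is an almost-additive Bowen sequence is correct and matches the paper, which simply asserts this part is clear. The problem is in the second half. Your entire argument for the failure of \ref{a1} rests on the claim that there is no uniform bound on the length of a path connecting two given symbols, in particular that the graph-distance from a symbol $m$ back to the hub $1$ grows without bound as $m\to\infty$. That is false for this graph. The alphabet splits into blocks $\{3k-1,3k,3k+1\}$, each carrying the full shift on its three symbols, with $3k+1\to 1$ and $1\to 3k+1$ for every $k$; hence every vertex reaches $1$ in at most two steps (go to the exit symbol $3k+1$ of its block, then to $1$) and is reached from $1$ in at most two steps. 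The paper explicitly observes that $X$ has the \emph{strong} specification property with specification number $3$: any $u,v$ are joined by a word of the form $(3j+1)\,1\,(3k+1)$, padded with the self-loop at $1$ where needed. So connecting words of bounded length always exist, and the pigeonhole contradiction you describe never gets off the ground. Note also that ``not finitely irreducible'' is not equivalent to ``no uniform bound on connecting path lengths'': here the lengths are uniformly $3$, but the \emph{set} of connecting words required is infinite, because entering the block containing $3n$ forces the connecting word to contain the symbol $3n+1$.

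The actual obstruction is analytic, not combinatorial, and it lives exactly in the step you propose to sidestep (``no need to track the $f_{n+m+|w|}$ suprema''). The paper first proves (Claim \ref{ourp}) that for a Bowen sequence satisfying \ref{a0} and \ref{a1}, every nonempty connecting word $w$ of length $k$ used in \ref{a1} must obey $\sup\{f_k(x):x\in[w]\}\ge D/(Me^{2C})$, a uniform lower bound. It then observes that any word connecting the symbol $3$ to the symbol $3n$ must pass through $3n+1$, so for such $w$ one has $\sup\{g_k(x):x\in[w]\}\le \bigl(\max_{1\le k\le p}c_k\bigr)(C'')^{p-1}\lambda_{3n+1}$, which tends to $0$ because $\sum_j\lambda_j<\infty$ forces $\lambda_{3n+1}\to 0$. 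This contradicts the uniform lower bound. To repair your argument you would need to replace the ``no connecting word exists'' step by this quantitative degeneration of the weights on the forced bridge symbols.
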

\begin{proof}
It is clear that   $\mathcal{G}$ is an almost-additive Bowen sequence. Observe that $(X, \sigma)$ is topologically mixing and that $3$ is a strong specification number and, moreover, $X$ is not finitely irreducible. 
 \begin{claim}\label{ourp}
Let $(X, \sigma)$ be a subshift on a countable alphabet and  $\mathcal{F}=\{\log f_n\}_{n=1}^{\infty}$  a Bowen sequence on $X$ satisfying  \ref{a0} and \ref{a1}.  
Let $w\in \bigcup_{i=1}^p B_i(X)$ be an allowable word from \ref{a1}.
Then there exists $C'>0$ such that for any $w$ of length $k$, we have  $\sup\{f_k(x):x\in [w]\}\geq C'$.
\end{claim}

\begin{proof}
Since \ref{a1} is satisfied, given  $u\in B_n(X), v\in B_m(X)$, there exist $0\leq k\leq p$ and $w=w_1\dots w_k\in B_k(X)$ with the property 
 \begin{equation}\label{p1}
 \sup\{f_{n+m+k}(x):x\in [uwv]\}\geq D \sup\{f_n(x):x\in [u]\} \sup\{f_m(x):x\in [v]\}.
 \end{equation}
We consider only $uwv$ with the length $k$ of $w\geq 1$. 
For any  $x\in [uwv]$, it is a consequence of (\ref{p1}),  \ref{a0}  and the Bowen property of $\F$ that,
 \begin{equation*}
 Me^{2C}f_{n}(x)f_{k}(\sigma^{n}x)f_{m}(\sigma^{k+n}x)\geq D f_n(x)f_m(\sigma^{k+n}x).
 \end{equation*}
 Hence
 \begin{equation*}
 \sup\{f_k(x):x\in [w]\}\geq \frac{D}{Me^{2C}}=C'.
 \end{equation*}
\end{proof}

Assume by way of contradiction that the sequence  $\mathcal{G}$ satisfies \ref{a1} for some $p \in \N$. Consider the symbol $3$ and $3n$ for some $n\in\N$. To connect $3$ and $3n$, the symbol $3n+1$ must be passed through. Suppose  $w=w_1\dots w_k$ is a word of length $k\leq p$ such that $3w(3n)$ is allowable and satisfies \ref{a1}. Then $3n+1$ must appear in some $w_i, 1\leq i\leq k$. Clearly $k\geq 1$. Since $ \lambda_{j}$ is bounded above by some constant $C''>1$ for all $j\in\N$, we obtain 
$$\sup\{g_k(x):x\in [w]\}
\leq \max_{1\leq k\leq p}\{c_p\}{C''}^{p-1}\lambda_{3n+1}.$$
Applying Claim \ref{ourp}, $\lambda_{3n+1}$ is bounded below by a constant for all $n\in\N$. However  by the construction of $ \lambda_{j}$, $\lim_{n\rightarrow \infty} \lambda_{3n+1}=0.$ This contradiction proves the lemma.
\end{proof}

\begin{eje} \textnormal{\textbf{A sequence satisfying  \ref{a0},\ref{a1} and \ref{a3}.}}\label{exfactor}
In this example, we will make use of the notion of factor map (see Section \ref{ss:fac}).  
Let $(X, \sigma_X)$, $(Y,\sigma_Y)$ be subshifts on countable alphabets, and $\pi:X\rightarrow Y$ be a one-block factor map such that $\vert \pi^{-1}(i)\vert<\infty$, for every $i\in \N$. Define $\phi_n:Y\rightarrow \R$ by $\phi_n(y)=\log \vert \pi^{-1}(y_1\dots y_n)\vert$ and $\Phi=\{\log \phi_n\}_{n=1}^{\infty}$. Then $\Phi$ is a Bowen sequence. In the next lemma, we prove that under suitable assumptions on $X$ and $Y$ the sequence $\Phi$ 
satisfies \ref{a0}, \ref{a1} and \ref{a3}. Let $\varepsilon_X$ and $\varepsilon_Y$ be the empty words of $X$ and $Y$ respectively. By convention, let
$\pi(\varepsilon_X)=\varepsilon_Y$.

\begin{lema}\label{imp}
Let $(X, \sigma_X)$ be a countable Markov shift, $(Y, \sigma_Y)$ a subshift on a countable alphabet, and $\pi:X\rightarrow Y$ a one-block factor map such that $\vert \pi^{-1}(i)\vert<\infty$ for each $i\in \N$. 
If $X$ is finitely irreducible, then $\Phi=\{\log \phi_n\}_{n=1}^{\infty}$ is a Bowen sequence on $Y$ satisfying \ref{a0},\ref{a1} and \ref{a3}. If $W_1$ is a finite set from Definition \ref{fi}, then let $\pi(W_1)=\{\pi(w): w \in W_1\}$.
For any $u\in B_n(Y), v\in B_m(Y), n, m\in\N$, there exists $w'\in \pi(W_1)$ such that 
$\vert \pi^{-1}(uw'v)\vert \geq  (1/{\vert W_1\vert})\vert \pi^{-1}(u)\vert \vert \pi^{-1}(v)\vert.$
\end{lema}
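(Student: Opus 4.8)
The plan is to argue throughout with the integer-valued functions $\phi_n(y)=\vert\pi^{-1}(y_1\dots y_n)\vert$ themselves rather than their logarithms. First I would record that $\phi_n$ is well defined, finite and $\geq 1$: since $Y=\pi(X)$, every $u\in B_n(Y)$ is the $\pi$-image of the first $n$ symbols of some allowable word of $X$, so $\pi^{-1}(u)\neq\emptyset$, and $\vert\pi^{-1}(u)\vert<\infty$ by the standing assumption $\vert\pi^{-1}(i)\vert<\infty$. Because $\phi_n$ depends only on the first $n$ coordinates, it is constant on every cylinder of length $n$, so $M_n=1$ for all $n$ and $\Phi$ is a Bowen sequence with $M=1$. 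For \ref{a0} I would check that $\Phi$ is in fact sub-additive: restricting an allowable word $z\in\pi^{-1}(y_1\dots y_{n+m})$ of $X$ to its first $n$ symbols $z_1\dots z_n$ and its last $m$ symbols $z_{n+1}\dots z_{n+m}$ gives, because $\pi$ is one-block, a pair in $\pi^{-1}(y_1\dots y_n)\times\pi^{-1}(y_{n+1}\dots y_{n+m})$, and $z$ is recovered from this pair by concatenation; hence $\phi_{n+m}(y)\leq\phi_n(y)\,\phi_m(\sigma^n y)$, which is (\ref{A2}) with $C=0$.

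The core of the proof is the quantitative specification estimate in the last sentence of the statement, from which \ref{a1} and \ref{a3} follow at once. Fix $u\in B_n(Y)$ and $v\in B_m(Y)$, and let $p\in\N$ and the finite set $W_1\subset\bigcup_{i=0}^{p}B_i(X)$ be as in the finite irreducibility of $X$. The sets $\pi^{-1}(u),\pi^{-1}(v)$ of allowable words of $X$ are nonempty and finite. For each pair $(a,b)\in\pi^{-1}(u)\times\pi^{-1}(v)$ use finite irreducibility of $X$ to select one word $g(a,b)\in W_1$ with $a\,g(a,b)\,b\in B(X)$; this partitions $\pi^{-1}(u)\times\pi^{-1}(v)$ into the fibers $g^{-1}(\tilde w)$, $\tilde w\in W_1$, so by pigeonhole some $\tilde w^{*}\in W_1$ satisfies $\vert g^{-1}(\tilde w^{*})\vert\geq\vert\pi^{-1}(u)\vert\,\vert\pi^{-1}(v)\vert/\vert W_1\vert$. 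Set $w'=\pi(\tilde w^{*})\in\pi(W_1)$, a word of length $\vert\tilde w^{*}\vert\leq p$. For every $(a,b)\in g^{-1}(\tilde w^{*})$ the string $a\tilde w^{*}b$ is an allowable word of $X$ and, $\pi$ being one-block, $\pi(a\tilde w^{*}b)=u\,w'\,v$; in particular $uw'v\in B(Y)$ and $a\tilde w^{*}b\in\pi^{-1}(uw'v)$. The map $(a,b)\mapsto a\tilde w^{*}b$ on $g^{-1}(\tilde w^{*})$ is injective, since $a$ and $b$ are read off as the first $n$ and the last $m$ symbols of $a\tilde w^{*}b$; hence $\vert\pi^{-1}(uw'v)\vert\geq\vert g^{-1}(\tilde w^{*})\vert\geq\vert\pi^{-1}(u)\vert\,\vert\pi^{-1}(v)\vert/\vert W_1\vert$, which is the asserted inequality.

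To finish, I would translate this back using that $\phi_k$ is constant on cylinders of length $k$: writing $k=\vert w'\vert\leq p$, the inequality just obtained reads $\sup\{\phi_{n+m+k}(x):x\in[uw'v]\}\geq(1/\vert W_1\vert)\sup\{\phi_n(x):x\in[u]\}\sup\{\phi_m(x):x\in[v]\}$, which is precisely \ref{a1} with $D=1/\vert W_1\vert$ and $p$ the finite irreducibility constant; and since $w'$ always lies in the finite set $W:=\pi(W_1)\subset\bigcup_{k=0}^{p}B_k(Y)$, condition \ref{a3} holds with this $W$. The step carrying the real content is the pigeonhole over $W_1$: it is what upgrades the qualitative statement ``every pair of preimage words can be bridged by some word of $W_1$'' to the quantitative ``one fixed bridging word serves a $1/\vert W_1\vert$ fraction of all pairs'', and hence lifts mere irreducibility of $Y$ to the multiplicative bound \ref{a1}. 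The only genuine nuisance I anticipate is the bookkeeping around the empty word — when $\tilde w^{*}=\varepsilon_X$ one invokes the convention $\pi(\varepsilon_X)=\varepsilon_Y$, reads $a\tilde w^{*}b$ as $ab$ and $uw'v$ as $uv$, and the argument is unchanged — together with being consistent in the fact that, $X$ being a countable Markov shift, every transition-admissible finite string is an allowable word, so that the restrictions and concatenations of preimage words used above are again preimage words.
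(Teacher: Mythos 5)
Your proof is correct and follows the same route the paper takes: the paper simply cites \cite[Lemma 5.7]{Fe4} (the finite-alphabet version), and the pigeonhole argument over $W_1$ you spell out is exactly the content of that lemma, correctly adapted to the countable, finitely irreducible setting. Your bookkeeping — injectivity of $(a,b)\mapsto a\tilde w^{*}b$, the sub-additivity of $\Phi$ via restriction of preimage words, and the empty-word convention — is all sound, so your write-up is in fact a complete self-contained proof where the paper gives only a reference.
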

\begin{proof}
See  \cite[Lemma 5.7]{Fe4} in which the above result was studied for the case when $X$ is an irreducible subshift on finite alphabets. 
This implies the result.
\end{proof}


\begin{rem}
The case when $X$ is not finitely irreducible is studied in Example \ref{nofinite} in which $\Phi$ on $Y$ does not satisfy \ref{a3}.
We also remark that in general $\Phi$ is not almost-additive (see \cite{Y1, Y}).\end{rem}

\end{eje}

\subsection{Examples of sequences on irreducible countable sofic shifts }\label{examplespotentilas}
We provide a wide range of examples of sequences of functions satisfying (or not) different additivity properties. Some of these examples can only occur in non-compact settings and show some of the new phenomena that have to be considered in the countable alphabet situation. The examples  in this section come from a construction in the theory of factor maps and  will also appear in the following sections when we study the variational principle.   Let 
$\Phi$ be the sequence of functions as in Example \ref{exfactor}. 


%
%

\begin{eje}\textnormal{\textbf{A sequence on a finitely irreducible countable Markov shift satisfying \ref{a0}, \ref{a1} and \ref{a3}.}} \label{e1}
In this example, we construct a sequence of functions  which satisfies \ref{a0},\ref{a1} and \ref{a3}, but fails to be almost-additive. Let $(X, \sigma)$ be a countable Markov shift  determined by the transitions given by Figure \ref{fig1}. 

Let $\pi: \N\rightarrow \N$ be the function defined by $\pi(-i+n(n+1)/2)=n$, $i=0, \dots, n-1$ for $n \in \N$ and  $\Sigma$ be the full shift on a countable alphabet. Define $\pi: X\rightarrow \Sigma$ by 
$(\pi(x))_i=\pi(x_i)$ for all $i\in\N$ and 
denote $\pi(X)$ by $Y$. Then the map $\pi:X \rightarrow Y$ is a one-block factor map. 
Note that since $\vert \pi^{-1}(i)\vert=i$ for $i\in \N$ we have that $\vert \pi^{-1}(i)\vert$ is not uniformly  bounded. We stress that this property cannot occur  when $X$ is a finite state Markov shift.  $X$ has a strong specification number equal to $2$, just by considering  $W=\{12, 22\}$. Thus, the countable Markov shift $Y$ also has a strong specification number $2$.  

We first observe that $\Phi$ is not almost-additive on $Y$. Let $A$ be the transition matrix for $X$. It was shown in \cite [Example 5.6]{Y1} that $\Phi$ is not almost-additive on 
$\pi (X_{A\vert_{\{1,2,3\} \times \{1,2,3\}}})$.  
Let $k\geq 3$ be fixed and define 
\begin{equation*}
\psi_n(y)= \frac{\phi_n(y)}{(\vert \pi^{-1}(y_1)\vert \cdots \vert \pi^{-1}(y_n)\vert)^{k}},
\end{equation*}
and  $\Psi =\{\log \psi_n\}_{n=1}^{\infty}$. The sequence $\Psi$ is not almost-additive but it is sub-additive. 
By Lemma \ref{imp}, condition \ref{a1} holds with $p=2$. For $u\in B_n(Y)$ and $v\in B_m(Y)$, there exists a word $w \in \{\pi(12), \pi(22)\}$ of length $2$ such that 
$$\sup \{\psi_{n+m+2}(y): y\in [uwv]\}\geq \frac{1}{2^{2k+1}}\cdot \sup \{\psi_{n}(y): y\in [u]\}\sup \{\psi_{m}(y): y\in [v]\}.$$
Hence $\Psi$ is a Bowen sequence on $Y$ satisfying  \ref{a0}, \ref{a1} and \ref{a3}.

\begin{tikzpicture}[shorten >=1pt,node distance=1.5cm,on grid,auto]
   \node[state] (0) {$1$};
   \node[state] (1) [right=of 0] {$2$};
   \node[state] (2) [right=of 1] {$3$};
   \node[state] (3) [right=of 2] {$4$};
   \node[state] (4) [right=of 3] {$5$} ;
   \node  (5) [right=of 4] {$\cdots$} ;

   \path[->]
    (0) edge     (1)
    (1) edge                 (0)
    (1) edge   (2)
    (1)  edge [bend left]    (3)
    (1)  edge [bend left]    (4)
    (1) edge [loop below]   (1)
    (1) edge  (0)
    (1) edge [bend left]  (5)
    (2) edge [loop below]    (2)
    (2) edge       [bend right]          (0)
    (3) edge [bend right]    (1)
    (4) edge [bend right]   (1)
    (5) edge [bend right]   (1) ;
\end{tikzpicture}
\begin{figure}[h]
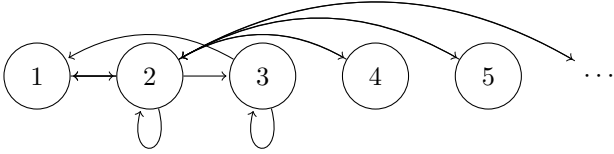

{} \caption{The graph defining $X$  in Example \ref{e1}}
\label{fig1}
\end{figure}
\end{eje}

\begin{eje}\textnormal{\textbf{A sequence on a finitely irreducible countable sofic shift satisfying \ref{a0}, \ref{a1} and \ref{a3}.}}\label{e11}
We study a general case of Example \ref{e1}. 
Let $(X, \sigma_X)$ be a finitely irreducible countable Markov shift, $(Y, \sigma_Y)$ a subshift, and $\pi: X\rightarrow Y$ be a one-block factor map.
Thus, $(Y, \sigma_Y)$ is a finitely irreducible countable sofic shift. 
Suppose there exist $C_1, C_2>0, k\geq 1 $ such that  for every $i \in \N$ we have
\begin{equation*} 
C_1i^k\leq \vert \pi^{-1}(i)\vert \leq C_2i^k.
\end{equation*}
For $y\in [y_1\dots y_n]$,  define
\begin{equation*}
\psi_n(y):= \frac{\phi_n(y)}{(\vert \pi^{-1}(y_1)\vert \cdots \vert \pi^{-1}(y_n)\vert)^{k+2}}
\end{equation*}
and $\Psi =\{\log \psi_n\}_{n=1}^{\infty}$. By Lemmas  \ref{variation} and \ref{imp}, the sequence  $\Psi$ is a sub-additive Bowen sequence on $Y$ satisfying   \ref{a0}, \ref{a1} and \ref{a3}. 
\end{eje}

\begin{eje}  \textnormal{\textbf {A sequence on a finitely irreducible countable sofic shift satisfying \ref{a0}, \ref{a1} and \ref{a3}.}}\label{exgeneral}
Let $(X, \sigma_X)$ be a finitely irreducible countable Markov shift, $(Y, \sigma_Y)$ a subshift, and $\pi: X\rightarrow Y$ be a one-block factor map such that $\vert  \pi^{-1}(i) \vert <\infty$ for any $i\in\N$. 
Thus, $(Y, \sigma_Y)$ is a finitely irreducible countable sofic shift. 
Let $K>0$. For $y\in [y_1\dots y_n]$, we define
$$\psi_n(y)= \frac{\phi_n(y)}{K^{y_1+\dots +y_n}}$$
and let $\Psi =\{\log \psi_n\}_{n=1}^{\infty}$. Then $\Psi$ is a sub-additive Bowen sequence on $Y$ satisfying   \ref{a0}, \ref{a1} and \ref{a3}. 
\end{eje}

\begin{eje}  \textnormal{\textbf {A sequence on a finitely irreducible countable sofic shift satisfying \ref{a0}, \ref{a1} and \ref{a3}.}}\label{exhiddeng}
Let $\G$ be defined as in Theorem \ref{hgibbs} in Section \ref{hiddeng}. Then $\G$ is a Bowen sequence defined on a finitely irreducible countable sofic shift satisfying \ref{a0},  \ref{a1} and \ref{a3}.
\end{eje}

\begin{eje} \textnormal{\textbf {A sequence satisfying \ref{a0} and \ref{a1} but not  \ref{a3}.}}\label{nofinite}
 Here we present an example of a sequence which satisfies \ref{a0} and \ref{a1}, but fails to be almost-additive and for which the finiteness condition  \ref{a3} does not hold.
We consider  a factor map defined on 
a countable Markov shift which is not finitely irreducible.  Let $(X, \sigma)$ be the countable Markov shift determined by the transitions given by Figure \ref{fig2}. We partition the alphabet defining $X$ in the following way:
$F_1=\{1\}, F_2=\{2, 3\}, F_3=\{4, 5, 6\},\dots$, in general $F_n$ consists of $n$ symbols, such that the subshift of $X$ restricted to the  symbols of $F_n$ is the full shift on $n$ symbols.
Let $\pi: \N\rightarrow \N$ be the function defined by $\pi(a)=n$ if $a\in F_{n}, n\in \N$ and let $\Sigma$ be the full shift on a countable alphabet. Define
$\pi: X\rightarrow \Sigma$  by $(\pi(x))_i=\pi(x_i)$ for all $i\in\N$. Let $Y=\pi(X)$. Then $Y$ is a countable Markov shift and
$\pi: X\rightarrow Y$ is a one-block factor map.
Note that $X$ is not finitely irreducible and that  $3$ is a specification number for $X$. On the other hand,  $Y$ is finitely primitive with a specification number $1$. Noting that  $\vert \pi^{-1}(i)\vert=i$ and $\vert \pi^{-1}(i1)\vert=1$, $\vert \pi^{-1}(i1)\vert/(\vert \pi^{-1}(i)\vert \vert \pi^{-1}(1)\vert)$ 
is not bounded below by a constant. Therefore the sequence $\Phi= \{\log \phi_n \}_{n=1}^{\infty}$ is not almost-additive, however it is  sub-additive by construction. 
Let $u=u_1\dots u_n\in B_n(Y)$ and  $v=v_1\dots v_m\in B_m(Y).$ We claim that 
\begin{equation}\label{ineqsp}
\vert \pi^{-1}(uu_n1v_1v)\vert \geq \vert \pi^{-1}(u)\vert \vert \pi^{-1}(v)\vert
\end{equation}
and hence \ref{a1} is satisfied. To see this,  consider a preimage $\bar u={\bar u}_1\dots {\bar u}_n$ of $u$ and $\bar v= {\bar v}_1\dots {\bar v}_m$ of  $v$.  Then ${\bar u}_n\in F_s$ and  ${\bar v}_1\in F_t$ for some $s, t\in\N$. Assume $s\neq 1$ and $t\neq 1$. Define $a_s\in F_s$ and $a_t\in F_t$ such that 
 $1a_s1$ and $1a_t1$ are allowable words. Then  $\bar u a_s1a_t \bar v$   is an allowable word of $X$ and $\pi(\bar u a_s
 1a_t \bar v)=uu_n1v_1v$. Similar arguments when $s=1$ or $t=1$ yield the same result. The claim now follows, indeed  
 $\Phi$ is a sub-additive Bowen sequence on $Y$ satisfying \ref{a1} with the strong specification. However, \ref{a3} is not satisfied. If we let  $W$ be the set consisting of all possible $u_n1v_1$ in (\ref{ineqsp}), then $W =\left\{ i1j: i, j \in \N \right\}$.  
 
 We observe that for any $p\in \N$ \ref{a3} is not satisfied.
 Suppose $\F$ satisfies \ref{a1} and \ref{a3} and let  $W$ be a finite set as in \ref{a3}. Clearly $W\neq \{\varepsilon\}$. 
 Observe that such a finite set $W$ consists of allowable words $w$ of the following four types. 
 If $w=w_1 \dots w_k$, for $1\leq k\leq p$, then $w_1=1$ and $w_k\neq 1$ (which we call Type 1), $w_1\neq 1$ and $w_k= 1$(Type 2), 
  $w_1=1$ and $w_k= 1$ (Type 3), or  $w_1\neq 1$ and $w_k\neq 1$ (Type 4).   
  Let $w$ be an allowable word of Type 1. Then for any allowable  words $u, v$ in $Y$ such that $uwv$ is allowable, we obtain
 $\vert \pi^{-1}(uwv)\vert\leq \vert \pi^{-1}(u1)\vert\vert \pi^{-1}(w)\vert \vert \pi^{-1}(v)\vert.$
 Let $i\in\N$. If we take $u=i$ then  $\vert \pi^{-1}(i)\vert=i$ and 
 $\vert \pi^{-1}(i1)\vert=1$. Therefore,  \ref{a1}  implies that $\vert \pi^{-1}(w)\vert /i\geq D$.  
  Hence
 there exist $N_1 \in \N$  such that if $i\geq N_1$ then  for any pair $i, v$, \ref{a1} does not holds with $iwv$ where $w$ is of Type 1. By making similar arguments for $w$ of Type 2, 3 and 4, there exists a pair $i, j\in \N$ such that \ref{a1} does hold by using a $w$ from a finite set $W$.

. \begin{tikzpicture}[shorten >=1pt,node distance=1.5cm,on grid,auto]
   \node[state] (0) {$1$};
   \node[state] (1) [right=of 0] {$2$};
   \node[state] (2) [right=of 1] {$3$};
   \node[state] (3) [right=of 2] {$4$};
   \node[state] (4) [right=of 3] {$5$} ;
  \node[state] (5) [right=of 4] {$6$} ;
 \node (6) [right=of 5] {$\cdots$} ;

   \path[->]
    (0) edge     (1) 
    (0) edge      [loop below]  
    (0) edge  [bend left]   (3) 
    (0) edge  [bend left]   (6)
    (1) edge                 (0)
    (1) edge   (2)
    (1) edge [loop below]    (1)
    (2) edge [loop below]    (2)
    (2)  edge    (1)
    (3)  edge    [bend right]     (0)
     (3) edge [loop below]    (3)
     (3)  edge                 (4)
     (3) edge [bend left]    (5)
     (4) edge    (3)
      (4) edge    (5)
     (4) edge [loop below]    (4)
    (5) edge  (4)
    (5) edge [bend right]   (3) 
    (5) edge [loop below]  (5)
    (6) edge       [bend right]          (0)
     
 ;
\end{tikzpicture}
\begin{figure}[h]
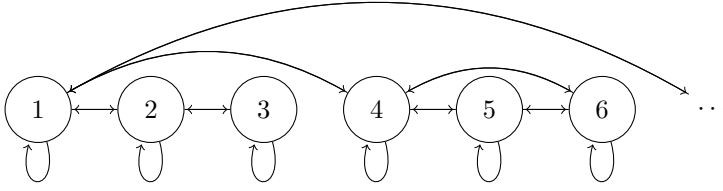

{} \caption{The graph defining $X$ in Example \ref{nofinite}}
\label{fig2}
\end{figure}
\end{eje}

\section{Variational Principle}\label{vpvp}
A fundamental result in thermodynamic formalism is the variational principle. It establishes a relation between the pressure (which is defined by means of the  topological structure of the system) and the sum of the metric entropy and the integral with respect to an invariant measure (which is defined by means of the Borel structure of the system). The relation in the variational principle for a sequence of functions $\F=\{\log f_n\}_{n=1}^{\infty}$ is the following
\begin{equation*}
P(\F)=\sup_{\mu\in M(X, \sigma)} \left\{h_{\mu}(\sigma)+\lim_{n\rightarrow  \infty}\frac{1}{n}\int \log f_n d\mu: \lim_{n\rightarrow  \infty}\frac{1}{n}\int \log f_n d\mu>-\infty \right\},
\end{equation*}
where $M(X, \sigma)$ denotes the space of $\sigma-$invariant Borel probability measures.  The goal of this section is to establish the variational principle for the types of sequences introduced in Section \ref{potentials}. 
\subsection{Variational principle for a countable Markov shift with the the weak specification property without the finiteness condition \ref{a3}}  \label{vp:spc}

The purpose of this section is to prove the variational principle for the Bowen sequences defined on 
countable Markov shifts  satisfying \ref{a0} and \ref{a1}. We do not assume the finiteness condition \ref{a3}.
Hence in the proof of the approximation property (Proposition \ref{approx1}), this  condition is not assumed.  However, we see in Lemma \ref{cha1} that if the pressure is finite, then the type of sequence we consider in this section is defined on a space with the finiteness condition. 

The following is an important technical remark (see \cite{mu2}). Since $X$ is an irreducible countable Markov shift,  by rearranging the set $\N$ of the symbols of $X$, there exists a transition matrix $A$ for $X$ and an increasing sequence $\{k_{n}\}_{n=1}^{\infty}$ such that the matrix $A\vert_{ \{1, \dots, k_{n}\}\times \{1, \dots,k_{n}\}}$ is irreducible.
Define $A_{k_n}:=A\vert_{ \{1, \dots, k_{n}\}\times \{1, \dots, k_{n}\}}$. We will assume the following property on the sequence of functions $\F=\{\log f_n\}_{n=1}^{\infty}$, where $f_{n}: X\to \R^{+}$ are continuous functions.
\smallbreak
\smallbreak
\begin{enumerate}[label=(P\arabic*)]  
\item \label{P1}   There exist  an increasing sequence $\{l_{n}\}_{n=1}^{\infty}$ and constants $D_1, p_1>0$ such that for each $l_n$ the matrix $A_{l_n}$ is irreducible and $\F\vert_{X_{ A_{l_n}}  }$ satisfies \ref{a1} with constants $D_{l_n}$ and  $p_{l_n}\in \N$ such that  $D_{l_n}\geq D_1$,  and  $p_{l_n}\leq p_1$ for every $n \in \N$.
\end{enumerate}

\begin{lema}\label{small}
If $\F=\{\log f_n\}_{n=1}^{\infty}$ is a Bowen sequence on an irreducible countable Markov shift $X$ satisfying \ref{P1}, then $\F$ satisfies \ref{a1} and  $X$ satisfies the weak specification property. 
\end{lema}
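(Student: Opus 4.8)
The plan is a local-to-global argument. Condition \ref{P1} supplies the quasi-multiplicativity \ref{a1}, with \emph{uniform} constants $D_1$ and $p_1$, on each of the compact subshifts $X_{A_{l_n}}$ that exhaust $X$; the task is to transfer it to $X$ itself. The only genuine issue is that the suprema occurring in \ref{a1} on $X$ are taken over $X$ rather than over the finite subsystems, and controlling the discrepancy is exactly what the Bowen property is for.

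First I would fix allowable words $u\in B_n(X)$ and $v\in B_m(X)$. Since $u$ and $v$ are finite words and the increasing sequence $\{l_n\}$ of \ref{P1} is unbounded, there is an index $N$ such that every symbol occurring in $u$ or in $v$ is at most $l_N$. As $A_{l_N}$ is the restriction of the transition matrix $A$ to $\{1,\dots,l_N\}$ and is irreducible, $u$ and $v$ are allowable for $X_{A_{l_N}}$, with $[u]\cap X_{A_{l_N}}\ne\emptyset$ and $[v]\cap X_{A_{l_N}}\ne\emptyset$. Applying \ref{P1} to $\F|_{X_{A_{l_N}}}$ produces $0\le k\le p_{l_N}\le p_1$ and a word $w\in B_k(X_{A_{l_N}})\subseteq B(X)$ with $uwv\in B(X_{A_{l_N}})\subseteq B(X)$ and
\begin{equation*}
\sup\{f_{n+m+k}(x):x\in[uwv]\cap X_{A_{l_N}}\}\ \ge\ D_{l_N}\,\sup\{f_n(x):x\in[u]\cap X_{A_{l_N}}\}\,\sup\{f_m(x):x\in[v]\cap X_{A_{l_N}}\}.
\end{equation*}

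The key step is then to replace each supremum over $X_{A_{l_N}}$ by the corresponding one over $X$. On the left this costs nothing, since $X_{A_{l_N}}\subseteq X$. On the right I would use the Bowen property: for $x\in[u]\cap X$ and any $y\in[u]\cap X_{A_{l_N}}$ one has $f_n(x)\le M_n f_n(y)\le M f_n(y)$, whence $\sup\{f_n(x):x\in[u]\}\le M\sup\{f_n(x):x\in[u]\cap X_{A_{l_N}}\}$, and similarly for $v$. Combining with $D_{l_N}\ge D_1$ gives
\begin{equation*}
\sup\{f_{n+m+k}(x):x\in[uwv]\}\ \ge\ \frac{D_1}{M^2}\,\sup\{f_n(x):x\in[u]\}\,\sup\{f_m(x):x\in[v]\},
\end{equation*}
which is exactly \ref{a1} with $p=p_1$ and $D=D_1/M^2$. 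The weak specification property of $X$, with specification number $p_1$, follows at once: the right-hand side above is strictly positive (each $f_j$ is positive), so $[uwv]\ne\emptyset$, i.e.\ $uwv\in B(X)$.

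The only point requiring care is the asymmetric treatment of the two sides of the inequality when passing from $X_{A_{l_N}}$ to $X$. This is precisely where a uniform bound $M$ on the $M_n$ is used instead of mere tempered variation, and it explains why under the weaker hypothesis of tempered variation the same reasoning would only yield condition \ref{a4}, with the resulting $D_{n,m}$ absorbing the factors $M_n$ and $M_m$.
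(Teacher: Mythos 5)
Your proof is correct and follows essentially the same route as the paper's: pass to a compact irreducible piece $X_{A_{l_N}}$ containing $u$ and $v$, apply \ref{P1} there, and use the Bowen bound $M$ to compare suprema over $X_{A_{l_N}}$ with suprema over $X$, arriving at the same constant $D_1/M^2$. The extra details you supply (why $u,v$ are allowable in $X_{A_{l_N}}$, and why the weaker tempered-variation hypothesis would only give \ref{a4}) are accurate but not needed beyond what the paper records.
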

\begin{proof}
Let $u\in B_n(X)$ and $v\in B_m(X)$ for $n, m\in\N$. Then there exists $N\in\N$ such that $u,v$ are allowable words of  $X_{A_{l_N}}$. Call  $Y:=X_{A_{l_N}}$. Then
the Bowen property and  \ref{P1} imply that there exists  $w\in B_{k}(Y)$, $0\leq k\leq p_{l_N}\leq p_1$   such that 
\begin{equation*}
\begin{split}
&\sup\{f_{n+m+k}(x):x\in [uwv]\}\geq \sup\{f_{n+m+k}\vert_{Y} (x):x\in [uwv]\}\\
&\geq D_{l_N}\sup\{f_{n}\vert_{ Y} (x):x\in [u]\}\sup\{f_{m}\vert_{Y }(x):x\in [v]\}
\geq \frac{D_{1}}{M^2}\sup\{f_{n} (x):x\in [u]\}\sup\{f_{m}(x):x\in [v]\}.
\end{split}
\end{equation*}
\end{proof}

In particular, a Bowen sequence on a finitely irreducible Markov shift satisfying \ref{a1} and \ref{a3}  is a sequence satisfying \ref{P1} (see Corollary \ref {cor:5}).
In the following propositions and lemmas, we continue to use the notation from \ref{P1} and Section \ref{potentials}.  Let $a\in \N$ be a symbol of a countable alphabet. 
For a compact $\sigma$-invariant subset $Y$ of $X$, define 
$Z_n(\mathcal{F}\vert_{Y}, a)=\sum_{y:\sigma^n(y)=y,y_1=a}f_n\vert_Y(y)$.

We first  show that with the assumption \ref{P1} the topological pressure $P(\F)$ and the Gurevich pressure $P_G(\F)$ takes $\infty $ when $Z_1(\F)=\infty$.
\begin{lema}\label{gdefinf}
Let $(X, \sigma)$ be an irreducible  countable Markov shift and
$\F=\{\log f_n\}_{n=1}^{\infty}$ a sequence on $X$ with tempered variation satisfying \ref{a0} and \ref{P1}.  If $Z_1(\F)=\infty$,  then $P(\F)=\infty$.  Thus, equation (\ref{firstdef}) holds.
\end{lema}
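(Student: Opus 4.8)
The plan is to exploit the hypothesis $Z_1(\F)=\infty$ to force $Z_n(\F)=+\infty$ for infinitely many $n$; since $\tfrac1n\log(+\infty)=+\infty$, this yields $P(\F)=\limsup_n\tfrac1n\log Z_n(\F)=+\infty$ immediately, and then the periodic-point computation already carried out in the proof of Proposition \ref{gdefi} transfers the same conclusion to $P_G(\F)$, so that $P(\F)=P_G(\F)=+\infty$, which is exactly \eqref{firstdef}.

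The heart of the argument is a doubling step. By Lemma \ref{small}, $\F$ satisfies \ref{a1} on all of $X$, say with constants $D>0$ and $p\in\N$. I claim that if $Z_{n_0}(\F)=\infty$ for some $n_0\in\N$, then $Z_\ell(\F)=\infty$ for some $\ell$ with $2n_0\le\ell\le 2n_0+p$. Indeed, for each ordered pair $u,v\in B_{n_0}(X)$, condition \ref{a1} provides $w=w(u,v)\in B_k(X)$ with $0\le k\le p$, $uwv\in B(X)$ and $\sup\{f_{|uwv|}(x):x\in[uwv]\}\ge D\,\sup\{f_{n_0}(x):x\in[u]\}\,\sup\{f_{n_0}(x):x\in[v]\}$. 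The word $uwv$ has length in $\{2n_0,\dots,2n_0+p\}$, and since $u$ is its prefix of length $n_0$ and $v$ its suffix of length $n_0$, the assignment $(u,v)\mapsto(uwv,|uwv|)$ is injective. Grouping the words $uwv$ by length and summing these inequalities,
\[
\sum_{\ell=2n_0}^{2n_0+p} Z_\ell(\F)\ \ge\ \sum_{u,v\in B_{n_0}(X)} D\,\sup\{f_{n_0}(x):x\in[u]\}\,\sup\{f_{n_0}(x):x\in[v]\}\ =\ D\,Z_{n_0}(\F)^2\ =\ \infty ,
\]
so one of the finitely many nonnegative summands on the left is infinite. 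Starting from $n_0=1$, where $Z_1(\F)=\infty$ by hypothesis, and iterating, I obtain a strictly increasing sequence $1=m_0<m_1<m_2<\cdots$ with $Z_{m_k}(\F)=\infty$ for all $k$; consequently $P(\F)=+\infty$.

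It remains to see that $P_G(\F)=+\infty$ as well. Fix $a\in\N$ and argue exactly as in the proof of Proposition \ref{gdefi}: gluing $aw_1c_nw_2a$ around an arbitrary $c_n\in B_n(X)$ via \ref{a1} and passing to the periodic point $(aw_1c_nw_2)^\infty$ gives, as in \eqref{ii1} (with the quantities $D_{n,m}$ there replaced by the uniform constant $D$, and up to a harmless bounded factor accounting for the multiplicity of the gluing), an estimate $\sum_{i=n+1}^{n+2p+1}Z_i(\F,a)\ge d_n\,Z_n(\F)$ with $d_n>0$; the only property of $\F$ used here is tempered variation, which makes $M_{n+2p+2}$, hence $d_n$, finite and positive. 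Therefore $Z_n(\F)=\infty$ forces $Z_i(\F,a)=\infty$ for some $i\in\{n+1,\dots,n+2p+1\}$, and applying this to $n=m_0,m_1,m_2,\dots$ produces infinitely many indices $i$ with $Z_i(\F,a)=\infty$; thus $\limsup_n\tfrac1n\log Z_n(\F,a)=+\infty$ for every $a\in\N$, so $P_G(\F)=+\infty$ and $P(\F)=P_G(\F)=+\infty$.

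The only point requiring genuine care is the bookkeeping in the doubling step, namely the injectivity of $(u,v)\mapsto(uwv,|uwv|)$, which is precisely what prevents over-counting when the pointwise inequalities are summed into a lower bound for $\sum_\ell Z_\ell(\F)$; an analogous control of the multiplicity is needed in the periodic-point construction, and everything else is routine.
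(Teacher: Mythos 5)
Your argument is correct in substance but takes a genuinely different route from the paper's. You work directly with the (possibly infinite) partition sums on $X$: a single gluing step doubles the index while preserving $Z_{n}(\F)=\infty$, so infinitely many terms in $\limsup_n \frac1n\log Z_n(\F)$ are $+\infty$ in the extended reals, and the periodic-point gluing then transfers the infinitude to $Z_i(\F,a)$ for every $a$. The paper never manipulates infinite quantities: it uses tempered variation to find, for each $L>0$, a compact irreducible piece $Y=X_{A_{l_N}}$ with $L<Z_1(\F\vert_Y)<\infty$, iterates the gluing $n$ times inside $Y$ to obtain the quantitative bound $Z_{n+i_n}(\F'\vert_Y)\geq (D'_1/(p_1+1))^{n-1}(Z_1(\F'\vert_Y))^{n}$, deduces $P(\F)\geq P(\F'\vert_Y)\geq d+\frac{1}{p_1+1}\log(Le^{C})$, and lets $L\to\infty$; the Gurevich statement then follows from Proposition \ref{gdefi} applied to the compact pieces together with the monotonicity $P_G(\F,a)\geq P_G(\F\vert_Y,a)$. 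Your version is more elementary and bypasses the compact approximation entirely, at the cost of the injectivity/multiplicity bookkeeping you rightly flag; the paper's version keeps every sum finite and reuses machinery (Proposition \ref{gdefi} and the approximation scheme of Proposition \ref{approx1}) that is needed elsewhere anyway.

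One caveat: you invoke Lemma \ref{small} to obtain \ref{a1} on all of $X$ with a uniform constant $D$, but Lemma \ref{small} assumes $\F$ is a Bowen sequence, whereas Lemma \ref{gdefinf} only assumes tempered variation. Under \ref{P1} and tempered variation the correct conclusion is \ref{a4} with an $n$-dependent constant of the form $D_1/(M_nM_m)$, and likewise the constants $D_{1,n}D_{1+p+n,1}$ in the proof of Proposition \ref{gdefi} cannot be replaced by a uniform $D$. This does not damage your argument, since in both the doubling step and the periodic-point step all you use is strict positivity of the constant, and a positive multiple of $\infty$ is still $\infty$; but the steps should be stated with the $n$-dependent constants.
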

 \begin{proof}
Let $f'_n(x)=e^Cf_n(x)$ for all $x\in X$ and $\mathcal{\F}'=\{\log f'_n\}_{n=1}^{\infty}$. Then the sequence  $\mathcal{F}'$ is sub-additive and $P(\F)=P(\F')$.
Note by Proposition \ref{gdefi} that we obtain $P_{G}(\F'\vert_{X_{l_n}}, a)=P(\F'\vert_{X_{l_n}})$ for each $l_n$. 
Since $g$ has tempered variation, if $Z_1(\F)=\infty$, then given $L>0$, there exists $N\in \N$ such that $Z_1(\F\vert_{X_{l_N}})>L$ and thus  $Z_1(\F'\vert_{X_{l_N}})>Le^C$.
Let $Y:=X_{l_{N}}$. Then \ref{P1} implies that for each $n\in \N$ there exists  $0\leq i_n\leq p_{1}(n-1)$ such that
 \begin{equation}\label{approxbelow}
Z_{n+i_n}(\F'\vert_Y)\geq (\frac{D'_1}{p_1+1})^{n-1}(Z_1(\F'\vert_Y))^n,
 \end{equation}
 where $D'_1=D_1/e^C$.
 Since we have  $P(\F')\geq  \limsup_{n\rightarrow \infty}(1/(n+i_n))\log Z_{n+i_n}(\F'\vert_Y)=P(\F'\vert_Y)$, we obtain  that 
 $P(\F')\geq P(\F'\vert_Y) \geq d +(1/(p_1+1))\log (Le^C)$ where $d=(1/(p_1+1))\log (D'_1/(p_1+1))$.  
 Letting $L\rightarrow\infty$, we obtain $P(\F)=P(\F')=\infty$.
 To see that (\ref{firstdef}) holds,  we apply  Proposition \ref{gdefi}. Since $P_{G}(\F\vert_Y, a)=P_{G}(\F'\vert_Y, a)=P(\F'\vert_{Y})$ and $P_{G}(\F, a)\geq P_{G}(\F\vert_{Y}, a)$, the result follows by letting $L\rightarrow\infty$. 
 \end{proof}

The next result provides an approximation property by compact invariant sets for the pressure, without the finiteness condition \ref{a3}. We prove this by using the Gurevich pressure.

\begin{prop}\label{approx1}
Let $(X,\sigma)$ be an irreducible 
countable Markov shift.
If $\F=\{\log f_n\}_{n=1}^{\infty}$ is a Bowen sequence on $X$ satisfying \ref{a0} and  \ref{P1}, then
\begin{equation*}
P(\F)=\sup_{l_n, n\in \N} \left\{P(\F\vert_{X_{A_{l_n}}}) \right\},
\end{equation*}
and $P(\F)\neq -\infty.$
\end{prop}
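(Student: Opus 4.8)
The plan is to prove the equality by establishing both inequalities, using the Gurevich pressure as the bridge. The inequality $P(\F)\geq \sup_{l_n}\{P(\F\vert_{X_{A_{l_n}}})\}$ is the easy direction: for each $l_n$, the subshift $X_{A_{l_n}}$ is a compact invariant subset of $X$, and since $B_k(X_{A_{l_n}})\subseteq B_k(X)$ while $\sup\{f_k(x):x\in[w]\cap X_{A_{l_n}}\}\leq \sup\{f_k(x):x\in[w]\cap X\}$ for every word $w$, we get $Z_k(\F\vert_{X_{A_{l_n}}})\leq Z_k(\F)$ for all $k$, hence $P(\F\vert_{X_{A_{l_n}}})\leq P(\F)$. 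Taking the supremum over $n$ gives the bound.

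For the reverse inequality I would split into two cases according to whether $Z_1(\F)$ is finite. If $Z_1(\F)=\infty$, then Lemma \ref{gdefinf} gives $P(\F)=\infty$; but the proof of that lemma already shows $P(\F\vert_{X_{A_{l_N}}})\to\infty$ as we let the truncation grow (via the lower bound \eqref{approxbelow}), so $\sup_{l_n}\{P(\F\vert_{X_{A_{l_n}}})\}=\infty=P(\F)$ as well, and the statement $P(\F)\neq-\infty$ is trivial. The substantive case is $Z_1(\F)<\infty$. Here, since $\F+C$ is sub-additive and $Z_1(\F)<\infty$, the limit $P(\F)=\lim_n (1/n)\log Z_n(\F)$ exists and is finite (bounded below because, exhausting by $X_{A_{l_n}}$ and using the key estimate \eqref{ii1}-type computation from Proposition \ref{gdefi}, $Z_n(\F)>0$ has a controlled lower bound, so $P(\F)>-\infty$). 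Then by Proposition \ref{gdefi} applied on $X$ (whose hypotheses \ref{a0} and a version of \ref{a4}, namely \ref{a1} via Lemma \ref{small}, hold — one checks \ref{a1} implies the needed \ref{a4}-type statement with constant sequences), we have $P(\F)=P_G(\F)=\limsup_n (1/n)\log Z_n(\F,a)$ for any symbol $a$. Fix such an $a$; then $a$ lies in some $X_{A_{l_N}}$, and for all $l_n\geq l_N$ we have $Z_n(\F\vert_{X_{A_{l_n}}},a)\nearrow Z_n(\F,a)$ as $l_n\to\infty$ by monotone convergence on the (countably many) periodic orbits through $[a]$. Consequently, for each $n$, $\sup_{l_m} Z_n(\F\vert_{X_{A_{l_m}}},a) = Z_n(\F,a)$, and by a standard diagonal/exhaustion argument $\sup_{l_m} P_G(\F\vert_{X_{A_{l_m}}}) \geq \limsup_n (1/n)\log Z_n(\F,a) = P_G(\F) = P(\F)$. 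Since each $X_{A_{l_m}}$ is a topologically mixing finite Markov shift, Proposition \ref{gdefi} (or the classical compact variational principle) gives $P_G(\F\vert_{X_{A_{l_m}}})=P(\F\vert_{X_{A_{l_m}}})$, completing the chain.

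The main obstacle I anticipate is the interchange of limits in the exhaustion argument: one must pass from $\sup_{l_m} Z_n(\F\vert_{X_{A_{l_m}}},a)=Z_n(\F,a)$ for each fixed $n$ to $\sup_{l_m}\limsup_n (1/n)\log Z_n(\F\vert_{X_{A_{l_m}}},a)\geq \limsup_n (1/n)\log Z_n(\F,a)$, which is not automatic since supremum and limsup do not commute in general. The way around this is to use that the truncations are nested and that on each compact piece the Gurevich pressure is a genuine limit (by the compact theory), together with the fact that for any $\varepsilon>0$ one can first choose $n$ with $(1/n)\log Z_n(\F,a) > P(\F)-\varepsilon$, and then, since $Z_n(\F,a)$ is a sum over the countably many length-$n$ periodic words through $[a]$, choose a truncation $l_m$ large enough that $Z_n(\F\vert_{X_{A_{l_m}}},a) \geq Z_n(\F,a)/2$; this yields $(1/n)\log Z_n(\F\vert_{X_{A_{l_m}}},a) \geq P(\F)-\varepsilon - (\log 2)/n$, and since on the compact shift $P(\F\vert_{X_{A_{l_m}}}) \geq \liminf_k (1/k)\log Z_k(\F\vert_{X_{A_{l_m}}},a)$ fails to immediately bound this, one instead invokes that for the topologically mixing compact shift the relevant pressure is $\limsup$ along a cofinal sequence (Proposition \ref{gdefi}) so that a single good $n$ suffices up to adjusting $\varepsilon$. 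Letting $\varepsilon\to 0$ and $n\to\infty$ along the chosen subsequence finishes the proof.
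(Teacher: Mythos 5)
Your overall strategy --- the easy inequality via $Z_k(\F\vert_{X_{A_{l_n}}})\leq Z_k(\F)$, the reduction to the case $Z_1(\F)<\infty$, the identification $P(\F)=P_G(\F)$ via Proposition \ref{gdefi}, and the monotone approximation $Z_q(\F',a)=\lim_m Z_q(\F'\vert_{X_{A_{l_m}}},a)$ --- is exactly the paper's, and you have correctly isolated the one genuinely delicate point. But your resolution of that point does not work as written. Having found a single time $q$ and a truncation $Y=X_{A_{l_m}}$ with $(1/q)\log Z_q(\F'\vert_Y,a)\geq P(\F)-\varepsilon$, you need to conclude $P(\F'\vert_Y)\geq P(\F)-\varepsilon'$; you appeal to the fact that on the compact piece the relevant pressure is a genuine limit, so that ``a single good $n$ suffices up to adjusting $\varepsilon$.'' Neither the equality $P=P_G$ on $Y$ nor the existence of the limit $\lim_k(1/k)\log Z_k(\F'\vert_Y,a)$ implies that this limit is bounded below by its value at one particular time $q$: the sequence $(1/k)\log Z_k(\F'\vert_Y,a)$ could a priori be large at $k=q$ and then decrease. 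What is actually needed --- and what the paper supplies as Lemma \ref{capro} --- is the almost-super-multiplicativity $Z_{n+m+i_{n,m}}(\F'\vert_Y,a)\geq \frac{D_1}{(p_Y+1)M}\,Z_n(\F'\vert_Y,a)Z_m(\F'\vert_Y,a)$ for some $0\leq i_{n,m}\leq p_Y$, which is where hypothesis \ref{P1} (uniform constants $D_1$ and $p_1$ across all truncations) enters in an essential way. Iterating this $k$ times as in (\ref{aviodinf}) and (\ref{key-}) propagates the single good time $q$ to all times $kq+i_1+\cdots+i_{k-1}$ and yields $(1/q)\log Z_q(\F'\vert_Y,a)\leq \epsilon+(1+\epsilon)P(\F'\vert_Y)$, which is precisely the inequality your chain needs. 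Without this lemma the argument breaks exactly at the step you flagged.

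The same omission affects your argument that $P(\F)\neq-\infty$: positivity of $Z_n(\F)$, or even a one-step estimate of the type (\ref{ii1}), does not prevent $Z_n(\F)$ from decaying super-exponentially. The paper again obtains the lower bound from the iterated super-multiplicativity (\ref{key-}) applied on an arbitrary compact truncation. So your proposal has the right skeleton (and the treatment of the case $Z_1(\F)=\infty$ via Lemma \ref{gdefinf} is correct), but it is missing its load-bearing lemma; once Lemma \ref{capro} and its Fekete-type iteration are inserted, the outline coincides with the paper's proof.
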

\begin{proof}
We use similar arguments used in  \cite[Proposition 3.1]{iy1}. 
First assume $Z_1(\F)<\infty$ and so $P(\F)<\infty$. Assume also that $-\infty<P(\F)$(we show that $P(\F)\neq -\infty$).
Define $f'_n(x)$ and $\mathcal{F}'$ as in the proof of Lemma \ref{gdefinf}.
Then the sequence  $\mathcal{F}'$ is a sub-additive Bowen sequence and $P(\F)=P(\mathcal{F}')$.  
Let $a\in \N$ be a symbol of the countable  alphabet of $X$.
Then $P(\mathcal{F}')=\lim_{n\rightarrow \infty}B_n$, where $B_n=\sup_{k\geq n}(1/k)\log Z_k( \mathcal{F}', a)$ and 
$B_n<\infty$ for all $n\in \N$. 
Let $\epsilon>0$ and fix $m\in \N$ such that 
\begin{equation}\label{cond}
\frac{p_1}{m}<\epsilon \quad \textnormal{ and }  \quad \frac{1}{m} \left\vert \log \frac{M(p_1+1)}{D_1}  \right\vert< \epsilon.
\end{equation}
Then there exists $q\in\N, q \geq m$ such that
\begin{equation*}
B_m-\epsilon<\frac{1}{q}\log Z_{q}( \mathcal{F}', a)\leq B_m.
\end{equation*}
Thus, 
\begin{equation}\label{eq0}
P(\mathcal{F}')\leq B_m <\frac{1}{q}\log Z_{q}( \mathcal{F}', a)+\epsilon.
\end{equation}
Since $q \geq m$, (\ref{cond}) holds by replacing  $m$ by $q$. 
Then $Z_{q}(\mathcal{F}', a)=\lim_{n\rightarrow \infty}Z_q(\mathcal{F}'\vert_{X_{A_{l_n}}}, a).$
Note that $\{Z_q(\mathcal{F}'\vert_{X_{A_{l_n}}}, a)\}_{n=1}^{\infty}$ increases to $Z_{q}(\mathcal{F}', a)$ monotonically.
Thus there exists $n_1 \in \N$ such that 
\begin{equation}\label{eq1}
\frac{1}{q}\log Z_{q}(\mathcal{F}', a)<\frac{1}{q}  \log Z_{q}(\mathcal{F}'\vert_{X_{A_{l_{n_1}}}}, a)+\epsilon.
\end{equation}
By \ref{P1}, $ \mathcal{F}'\vert_{X_{A_{l_{n_1}}}}$ satisfies \ref{a1}.
Let $Y=X_{A_{l_{n_1}}}$. Then  $Y$ has the weak  specification property with a specification number $p_Y\leq p_1$.

\begin{lema}\label{capro}
For $n, m \in \N$, there exists $0\leq i_{n,m}\leq p_Y$ such that 
\begin{equation}
\frac{(p_Y+1)M}{D_1}Z_{i_{n,m}+n+m}(\mathcal{F}'\vert_ Y, a)\geq Z_n(\mathcal{F}'\vert_Y, a)Z_m(\mathcal{F}'\vert_Y, a).
\end{equation}
\end{lema}

\begin{proof}
Let $n, m\in\N$ be fixed. Take $x, y \in Y$ such that $\sigma^{n}x=x$ and  $\sigma^{m}y=y$, where $x_1=y_1=a$.  Let
$x=(ax_2\dots x_{n-1})^{\infty}$ and $y=(ay_2\dots y_{m-1})^{\infty}$. By \ref{P1}, there exist $0\leq k\leq p_Y$ and an allowable word $b_1\dots b_k$ in $Y$  such that $ax_2\dots x_{n-1}b_1\dots b_kay_2\dots y_{m-1}$ is allowable in $Y$  satisfying \ref{a1}. Thus
$z=(ax_2\dots x_{n-1}b_1\dots b_kay_2\dots y_{m-1})^{\infty}\in Y$ and $\sigma^{n+m+k}z=z.$

\begin{equation*}
\begin{split}
Mf'_{n+m+k}\vert_Y(z) &\geq 
\sup\{f'_{n+m+k}\vert_Y(x) :x\in [ax_2\dots x_{n-1}b_1\dots b_kay_2\dots y_{m-1}]\}\\
&\geq 
D_1\sup\{f'_{n}\vert_Y(x) :x\in [ax_2\dots x_{n-1}]\}  \sup\{f'_{m}\vert_Y(x) :x\in [ay_2\dots y_{m-1}]\} \\
&\geq D_1f'_n\vert_Y(x)f'_m\vert_Y(y).
\end{split}
\end{equation*}
Therefore
\begin{equation*}
M\sum_{k=0}^{p_Y}Z_{n+m+k}(\mathcal{F}'\vert_Y,a) \geq D_1Z_{n}(\mathcal{F}'\vert_Y,a)Z_{m}(\mathcal{F}'\vert_Y,a). 
\end{equation*}
There exists $0\leq i_{n,m}\leq p_Y$ such that 
\begin{equation*}
\frac{(p_Y+1)M}{D_1}Z_{n+m+i_{n,m}}(\mathcal{F}'\vert_Y,a) \geq Z_{n}(\mathcal{F}'\vert_Y,a)Z_{m}(\mathcal{F}'\vert_Y,a). 
\end{equation*}
\end{proof}

Setting $m=n=q$ in Lemma \ref{capro}, there exists $0\leq  i_{1} \leq p_Y$ such that
$ ((p_Y+1)M/D_1)Z_{2q+i_{1}}(\mathcal{F}'\vert_Y,a) \geq (Z_{q}(\mathcal{F}'\vert_Y,a))^2.$
Applying the lemma $(k-1)$ times, there exist $0\leq i_{1},\dots, i_{k-1} \leq p_Y$ such that
\begin{equation} \label{aviodinf}
\left(\frac{(p_Y+1)M}{D_1} \right)^{k-1}Z_{kq+i_{1}+\cdots+i_{k-1}}(\mathcal{F}'\vert_Y,a) \geq (Z_{q}(\mathcal{F}'\vert_ Y,a))^k.
\end{equation}
Now let $a_q=\log Z_q( \mathcal{F}'\vert_Y,a)$.
Then 
\begin{equation}
\begin{split}
\frac{a_q}{q}=&\frac{\log (Z_q( \mathcal{F}'\vert_Y,a))^k}{kq}
\leq \frac{(k-1)\log  \left(\frac{(p_Y+1)M}{D_1} \right)+\log Z_{kq+i_{1}+\cdots+i_{k-1}}(\mathcal{F}'\vert_Y,a)}{kq}. \label{key-}
\end{split}
\end{equation}
Since $p_Y\leq p_1$, letting $k\rightarrow\infty$
\begin{equation*}
\begin{split}
\frac{a_q}{q}&\leq \frac{\log \left(\frac{(p_1+1)M}{D_1}\right)}{q}+\left(1+\frac{p_1}{q}\right)\limsup_{k\rightarrow \infty}\frac{1}{kq+i_{1}+\cdots+i_{k-1}}{\log Z_{kq+i_{1}+\cdots+i_{k-1}}(\mathcal{F}'\vert_Y,a)}\\
&\leq \epsilon +(1+\epsilon)P(\mathcal{F}'\vert_Y) \leq \epsilon(P(\mathcal{F}')+1)+P(\mathcal{F}'\vert_Y).
\end{split}
\end{equation*}
Hence, using (\ref{eq0}) and (\ref{eq1}),
we obtain
\begin{equation*}
P(\mathcal{F}')\leq 2\epsilon +\epsilon (P(\mathcal{F}')+1)+P(\mathcal{F}'\vert_Y).
\end{equation*}
Next assume that $Z_1(\F)=\infty$. Then  the proof of Lemma \ref{gdefinf} shows the result.
To show $P(\mathcal{F})\neq-\infty$, observe that (\ref{aviodinf}) and (\ref{key-})  are valid for any $m, l_n\in \N$ if we replace $q$ and $Y$ by $m$ and $X_{l_n}$ respectively. Hence  letting $k\rightarrow \infty$
in (\ref{key-}) implies that $P(\mathcal{F}')\neq-\infty$.
\end{proof}

\begin{prop}\label{cha}
Let $(X,\sigma)$ be a subshift on a countable alphabet.
If $\F$ a sequence on $X$ with tempered variation satisfying \ref{a0} and \ref{a4}, then $P(\F)<\infty$ if and only if $Z_1(\F)<\infty$. 
\end{prop}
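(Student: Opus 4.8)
The plan is to prove both implications, with the forward direction $P(\F)<\infty \Rightarrow Z_1(\F)<\infty$ being the only nontrivial one. The reverse direction is immediate: if $Z_1(\F)<\infty$, then sub-additivity of $\F+C$ gives $Z_{n+m}(\F) \le e^C Z_n(\F) Z_m(\F)$ (using the weak specification to see every concatenation stays allowable, together with the Bowen-type/tempered variation control), so $\{ \log(e^C Z_n(\F)) \}$ is sub-additive in $n$, whence $P(\F) = \lim_n (1/n)\log Z_n(\F) \le \log Z_1(\F) + C < \infty$. This is already remarked right after Definition \ref{defpressure}, so I would just cite that observation.

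For the forward direction I would argue by contraposition: assume $Z_1(\F) = \infty$ and show $P(\F) = \infty$. The idea is to exploit condition \ref{a4} to build, for each truncation of the alphabet, many long allowable words whose weights multiply, forcing $Z_n(\F)$ to grow super-exponentially along a subsequence. Concretely, fix $L>0$; since $Z_1(\F)=\infty$, choose a finite set of symbols $\{i_1,\dots,i_r\}$ with $\sum_{j=1}^r \sup\{f_1(x):x\in[i_j]\} > L$. Using \ref{a4}, for any two of these symbols (or words built from them) there is a connecting word $w$ of length at most $p$ with the multiplicativity estimate down to a factor $D_{n,m}$; iterating this $n-1$ times to concatenate $n$ of the chosen symbols, one obtains, for some $0 \le i_n \le p(n-1)$,
\begin{equation*}
Z_{n+i_n}(\F) \ge \left( \frac{\text{const}}{p+1} \right)^{n-1} \Big( \prod \text{(factors } D_{\cdot,\cdot}) \Big) \, L^n,
\end{equation*}
and the tempered-variation hypothesis on $D_{n,m}$ (namely $(1/n)\log D_{n,m}\to 0$ and monotonicity) ensures that the accumulated $D$-factors contribute subexponentially, so that $(1/(n+i_n))\log Z_{n+i_n}(\F) \ge \frac{1}{p+1}\log L - o(1)$. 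Since $n + i_n \le (p+1)n$, taking $\limsup$ gives $P(\F) \ge \frac{1}{p+1}\log L$, and letting $L \to \infty$ yields $P(\F) = \infty$. This is essentially the mechanism already used in the proof of Lemma \ref{gdefinf} and in \eqref{approxbelow}, so I would structure the argument to parallel that computation, restricted now to the finitely many chosen symbols so that all the words involved are genuinely allowable and the counting is legitimate.

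The step I expect to be the main obstacle is making the iterated concatenation rigorous while keeping track of the connecting words: each application of \ref{a4} inserts a word $w$ of variable length $k \le p$ that depends on the current pair of words, so after $n-1$ steps the total inserted length $i_n$ is only bounded by $p(n-1)$, not fixed, and the $D$-factors that appear are of the form $D_{a,b}$ with both indices growing. I would need the monotonicity assumptions $D_{n,m}\ge D_{n,m+1}$, $D_{n,m}\ge D_{n+1,m}$ to bound each factor below by $D_{1,N}$-type quantities for a common large $N$, and then the condition $\lim_m (1/m)\log D_{n,m}=0$ to absorb the product of these into a subexponential term. Care is also needed because the word lengths $n+i_n$ skip around, so I would pass to the $\limsup$ carefully (as in Proposition \ref{approx1}), noting $\limsup_n (1/(n+i_n))\log Z_{n+i_n}(\F) \le P(\F)$ directly from Definition \ref{defpressure}. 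Once that bookkeeping is set up, letting $L\to\infty$ finishes the proof.
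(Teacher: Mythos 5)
Your reverse implication is correct and is exactly the observation recorded after Definition \ref{defpressure}: sub-additivity of $\F+C$ gives $Z_{n+m}(\F)\le e^{C}Z_n(\F)Z_m(\F)$ directly (no specification is needed, since every allowable word of length $n+m$ splits as a concatenation of allowable words of lengths $n$ and $m$), hence $P(\F)\le \log Z_1(\F)+C$. The forward implication, however, has a genuine gap, located precisely at the step you flag as the main obstacle. You propose to concatenate $n$ heavy symbols by $n-1$ applications of \ref{a4} and to absorb the product of the resulting factors $D_{a_j,b_j}$ into a subexponential error. Under \ref{a4} alone this cannot be done: after $j$ gluings the accumulated word has length of order $j(p+1)$, so by the monotonicity of $D_{n,m}$ the only available lower bound for the $j$-th factor is of the form $D_{cj,1}$ (or $D_{cj,cj}$ in a balanced scheme), and the hypothesis only gives $\log D_{N,m}=o(N)$ for fixed $m$. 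The logarithm of the product is therefore bounded below only by a sum of $n$ terms each of size $o(j)$, which is $o(n^2)$ but need not be $O(n)$. Concretely, $D_{n,m}=e^{-\sqrt{nm}}$ satisfies every requirement of \ref{a4}, yet the sequential product is of order $e^{-c n^{3/2}}$, which swamps $L^{n}$ and destroys the conclusion $P(\F)\ge\frac{1}{p+1}\log L$. The mechanism you cite from Lemma \ref{gdefinf} and \eqref{approxbelow} works there only because hypothesis \ref{P1} supplies a single constant $D_1$ valid for all pairs of words; Proposition \ref{cha} assumes only \ref{a4}, where the constant degrades with word length, and an $n$-fold iteration is exactly the regime in which that degradation accumulates fatally.

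The paper's proof avoids the iteration entirely by applying \ref{a4} exactly once. From $P(\F)<\infty$ one first gets $Z_n(\F)<\infty$ for all $n\ge N$ (otherwise the $\limsup$ in the definition of $P(\F)$ would be $+\infty$). Fixing such an $N$ and gluing each $u\in B_N(X)$ to each $v\in B_1(X)$ yields
\begin{equation*}
\sum_{i=0}^{p}Z_{N+i+1}(\F)\ \ge\ D_{N,1}\,Z_N(\F)\,Z_1(\F),
\end{equation*}
and since the left-hand side is finite while $D_{N,1}>0$ and $Z_N(\F)>0$, this forces $Z_1(\F)<\infty$. If you prefer your contrapositive phrasing, the same single-gluing inequality delivers it; what must be abandoned is the $n$-fold concatenation.
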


\begin{proof}
We claim that $P(\F)<\infty$ if and only if $Z_n(\F)<\infty$ for all $n\in \N$. This gives the result by noting that $Z_1(\F)<\infty$ if and only if $Z_n(\F)<\infty$ for all $n\in \N$.
It is obvious that if $Z_n(\F)<\infty$ for all $n\in \N$, then 
$P(\F)<\infty$.
If $P(\F)<\infty$, then there exists $N\in \N$ such that 
$Z_n(\F)< \infty$ for all $n\geq N$. Let $u_{N}\in B_{N}(X)$ and $v_1\in B_{1}(X)$. Then by \ref{a4} there exist $p\in \N$ and $w \in B_{k}(X), 0\leq k\leq p$, such that 
$u_{N}wv_1$ is allowable and 
\begin{equation*}
\sup\{f_{N+k+1}(x) : x\in [u_{N}wv_1]\}\geq 
D_{N, 1}\sup\{f_{N}(x) :x\in [u_N]\}\sup\{f_{1}(x) :x\in [v_1]\}.
\end{equation*}
Hence 
\begin{equation*}
\sum_{i=0}^{p}Z_{N+i+1}(\F) \geq D_{N,1}Z_{N}(\F) Z_1(\F).
\end{equation*}
Since $Z_{N}(\F) $ is bounded below by a constant, 
we obtain that  $Z_1(\F) <\infty$ and hence $Z_n(\F)<\infty$ for all $n\in \N$. 
\end{proof} 
\begin{rem}
See \cite[Proposition 1.6]{mu2}  for a result related to Proposition \ref{cha}. 
\end{rem}

\begin{lema}\label{cha1}
Let $\F$ be a Bowen sequence on a subshift $X$ on a countable alphabet  satisfying \ref{a0} and \ref{a1}. If $\F$  fails to have \ref{a3}, then $P(\F)=\infty$.
\end{lema}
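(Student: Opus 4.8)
The plan is to argue by contradiction: assume $P(\F)<\infty$ and manufacture a finite set $W$ witnessing \ref{a3}. First I note that Proposition \ref{cha} is applicable here, since a Bowen sequence has, in particular, tempered variation, and condition \ref{a1} is the special case of \ref{a4} obtained by taking the sequence $\{D_{n,m}\}$ to be the constant $D$. Hence $P(\F)<\infty$ gives $Z_1(\F)<\infty$. Combining \ref{a0} with the Bowen property one gets the sub-multiplicativity estimate $Z_{n+m}(\F)\le e^{C}Z_n(\F)Z_m(\F)$, so $Z_n(\F)\le e^{(n-1)C}Z_1(\F)^n<\infty$ for every $n\in\N$.

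Next I would invoke Claim \ref{ourp}: because $\F$ is a Bowen sequence satisfying \ref{a0} and \ref{a1}, there is a constant $C'>0$ such that every allowable word $w$ of length $k$, $1\le k\le p$, that occurs as a connecting word in condition \ref{a1} satisfies $\sup\{f_k(x):x\in[w]\}\ge C'$. For each $k$ with $1\le k\le p$, let $\mathcal{V}_k\subseteq B_k(X)$ be the set of all such length-$k$ connecting words. If $\mathcal{V}_k$ were infinite, then
\begin{equation*}
Z_k(\F)\ \ge\ \sum_{w\in\mathcal{V}_k}\sup\{f_k(x):x\in[w]\}\ \ge\ \sum_{w\in\mathcal{V}_k}C'=\infty ,
\end{equation*}
contradicting the previous paragraph. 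Hence each $\mathcal{V}_k$ is finite, and therefore $W:=\{\varepsilon\}\cup\bigcup_{k=1}^{p}\mathcal{V}_k$ is a finite subset of $\bigcup_{k=0}^{p}B_k(X)$.

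Finally I would check that this $W$ witnesses \ref{a3}: given allowable words $u\in B_n(X)$, $v\in B_m(X)$, condition \ref{a1} supplies some $w\in B_k(X)$ with $0\le k\le p$ such that $uwv\in B(X)$ and the inequality in \ref{a1} holds; by construction $w\in W$. Thus $\F$ satisfies \ref{a3}, contradicting the hypothesis, and so $P(\F)=\infty$. The only step demanding care is the first paragraph, namely confirming that Proposition \ref{cha} genuinely applies and that finiteness of $Z_1(\F)$ propagates to all $Z_n(\F)$; once $Z_n(\F)<\infty$ is in hand, the \emph{uniform} lower bound $C'$ coming from Claim \ref{ourp} makes the counting argument immediate, and no further combinatorial structure of $X$ beyond what is encoded in \ref{a1} is required.
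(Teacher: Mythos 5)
Your proposal is correct and follows essentially the same route as the paper: both arguments rest on Claim \ref{ourp} (the uniform lower bound $C'$ on $\sup\{f_k(x):x\in[w]\}$ for connecting words) together with Proposition \ref{cha} (finiteness of $P(\F)$ forces finiteness of every $Z_n(\F)$), the only difference being that you phrase the contradiction as "finitely many connecting words, hence \ref{a3} holds" while the paper phrases it as "infinitely many connecting words, hence $\sum_{i=1}^{p}Z_i(\F)=\infty$". The extra details you supply (applicability of Proposition \ref{cha} via \ref{a4} with constant $D_{n,m}=D$, and sub-multiplicativity of $Z_n(\F)$) are accurate and merely make explicit what the paper leaves implicit.
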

\begin{proof}

Assume $P(\F)<\infty$. By Claim  \ref{ourp}, $\sum_{i=1}^{p} Z_i(\F)=\infty$. This is a contradiction to Proposition \ref{cha}. 
\end{proof}
\begin{rem}
Note that by Lemma \ref{cha1} if $P(\F)\neq \infty$ then a Bowen sequence $\F$ satisfying \ref{a0} and \ref{a1} is defined on a finitely irreducible subshift. This motivates us to study a Gibbs measure for $\F$ (see \cite{mu2, s3}). 
\end{rem}

The main goal of this section is to obtain the variational principle and the results of the rest of this section will also be applied in Section \ref{sofic}.

\begin{prop}\label{upper}
Let $(X,\sigma)$ be a subshift on a countable alphabet and 
$\mathcal{F}=\{\log f_n\}_{n=1}^{\infty}$ be a sequence of continuous functions on $X$ with tempered variation  satisfying \ref{a0} and \ref{a4}. If $P(\F)<\infty$, then for any  $\mu \in  M(X, \sigma)$ such that $\lim_{n\rightarrow  \infty}
(1/n)\int \log f_n d\mu>-\infty$ we have
\begin{equation}\label{finitecase}
h_{\mu}(\sigma)+\lim_{n\rightarrow  \infty}\frac{1}{n}\int \log f_n d\mu\leq P(\F).
\end{equation}
\end{prop}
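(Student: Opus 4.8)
The plan is to prove the ``easy half'' of the variational principle by localizing to the partition $\alpha=\{[i]:i\in B_1(X)\}$ of $X$ into $1$-cylinders, whose $n$-th refinement $\mathcal{P}_n:=\bigvee_{i=0}^{n-1}\sigma^{-i}\alpha$ is exactly the partition $\{[w]:w\in B_n(X)\}$ into $n$-cylinders; the engine is the Gibbs inequality $\sum_w p_w\log(a_w/p_w)\le\log\sum_w a_w$. Before using it I would record the finiteness reductions. By Proposition \ref{cha}, $P(\F)<\infty$ forces $Z_1(\F)<\infty$ and hence $Z_n(\F)<\infty$ for every $n$, and sub-additivity of $\F+C$ gives $P(\F)=\lim_{n\to\infty}(1/n)\log Z_n(\F)$. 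Since $\log^+t\le t$ for $t>0$, for every $n$
\[
\int(\log f_n)^+\,d\mu\ \le\ \sum_{w\in B_n(X)}\mu([w])\,\log^+\!\big(\sup\{f_n(x):x\in[w]\}\big)\ \le\ Z_n(\F)\ <\ \infty ,
\]
so $\int\log f_n\,d\mu$ is well defined in $[-\infty,\infty)$; combined with the sub-additivity of $n\mapsto\int\log f_n\,d\mu+C$ (from \ref{a0} and $\sigma$-invariance of $\mu$) and the hypothesis $\lim_n(1/n)\int\log f_n\,d\mu>-\infty$, this yields $\int\log f_n\,d\mu\in\R$ for every $n$ and the existence (and finiteness) of $\lim_n(1/n)\int\log f_n\,d\mu$.

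The core estimate is then, for each $n$, the chain
\[
H_\mu(\mathcal{P}_n)+\int\log f_n\,d\mu\ \le\ \sum_{w\in B_n(X)}\mu([w])\log\frac{\sup\{f_n(x):x\in[w]\}}{\mu([w])}\ \le\ \log Z_n(\F),
\]
where the first inequality uses $\int_{[w]}\log f_n\,d\mu\le\mu([w])\log\sup\{f_n(x):x\in[w]\}$ termwise and the second is the Gibbs inequality with $p_w=\mu([w])$ and $a_w=\sup\{f_n(x):x\in[w]\}$ (so $\sum_w a_w=Z_n(\F)<\infty$). Dividing by $n$ and letting $n\to\infty$ in the outer inequality $H_\mu(\mathcal{P}_n)+\int\log f_n\,d\mu\le\log Z_n(\F)$: the left term tends to $h_\mu(\sigma,\alpha)$ by sub-additivity of $n\mapsto H_\mu(\mathcal{P}_n)$, and the right term tends to $P(\F)$; hence $h_\mu(\sigma,\alpha)+\lim_n(1/n)\int\log f_n\,d\mu\le P(\F)$. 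Taking $n=1$ in the displayed chain moreover shows $H_\mu(\alpha)\le\log Z_1(\F)-\int\log f_1\,d\mu<\infty$, so $\alpha$ is a generating partition of finite entropy for the one-sided shift and the Kolmogorov--Sinai theorem gives $h_\mu(\sigma,\alpha)=h_\mu(\sigma)$, which is the assertion.

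The only genuinely delicate point I anticipate is the bookkeeping forced by non-compactness: $\log f_n$ need not be bounded and $\alpha$ could a priori have infinite static entropy, so one must verify at each step that no indeterminate $\infty-\infty$ arises. The argument above is arranged precisely so that $Z_n(\F)<\infty$ (from Proposition \ref{cha}) controls $\int(\log f_n)^+\,d\mu$, the hypothesis on $\lim_n(1/n)\int\log f_n\,d\mu$ upgrades each $\int\log f_n\,d\mu$ to a finite real, and the $n=1$ instance of the key estimate rules out $H_\mu(\alpha)=\infty$; with these in hand the Gibbs inequality for countable index sets, the rearrangement of the mixed-sign sum $\sum_w(-\mu([w])\log\mu([w])+\int_{[w]}\log f_n\,d\mu)$, and the identity $\lim_n(1/n)H_\mu(\mathcal{P}_n)=h_\mu(\sigma,\alpha)$ are all routine.
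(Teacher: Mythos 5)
Your proposal is correct and follows essentially the same route as the paper: reduce to $Z_n(\F)<\infty$ via Proposition \ref{cha}, verify that $\int\log f_n\,d\mu$ is finite and that the limit exists (the paper invokes the sub-additive ergodic theorem where you use Fekete-type sub-additivity of $n\mapsto\int\log f_n\,d\mu+C$, but both work), and then apply the inequality $H_\mu(\mathcal{P}_n)+\int\log f_n\,d\mu\le\log Z_n(\F)$ — which the paper derives from concavity of $h(x)=-x\log x$ and you cite as the Gibbs inequality, the same estimate — before concluding via finiteness of $H_\mu(\alpha)$ and the fact that the one-cylinder partition generates. No gaps.
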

\begin{rem}
Assumptions of Proposition \ref{upper} imply that  $\lim_{n\rightarrow  \infty}
(1/n)\int \log f_n d\mu$ exists, and possibly $-\infty$ (see the proof below). Note that \ref{a4} implies  that $P(\F)\neq -\infty$. 
\end{rem}


\begin{proof}[Proof of Proposition \ref{upper}]
We follow the proof of \cite[Theorem 1.4]{mu2}. We have to slightly modify  the proof in order to take into account of the sub-additive sequence $\F':=\{\log (e^Cf_n) \}_{n=1}^{\infty}$. 
Since $P(\F)<\infty$,  Proposition \ref{cha}  implies  $Z_1(\F)<\infty$ and thus $\sup f_1<\infty$. Hence we obtain that $\int (\log e^Cf_1)^{+}d\mu<\infty$.  Applying sub-additive ergodic theorem to $\F'$, we obtain that 
$\lim_{n\rightarrow  \infty}(1/n)\int \log f_n d\mu$ exists for any $\mu\in M(X,\sigma)$.
Note by Proposition \ref{cha} that   $0< Z_n(\F) <\infty$ for each $n\in \N$. 
Using the sub-additivity of $\mathcal{F}'$, it follows  that for every $n,m\in\N$
\begin{equation*}
\frac{1}{nm} \int \log f_{nm} d\mu \leq \frac{1}{n} \int \log f_{n} d\mu+\frac{C}{n}.
\end{equation*}
Thus
\begin{equation*}
-\infty <\limsup_{m\rightarrow\infty}\frac{1}{nm} \int \log f_{nm} d\mu \leq \frac{1}{n} \int \log f_{n} d\mu +\frac{C}{n},
\end{equation*}
and for each $n\in\N$
\begin{equation*}
\sum_{w\in B_n(X)}\mu([w])\log \left(\sup\{f_n(x):x\in [w]\} \right)\geq \int \log f_n d\mu >-\infty.
\end{equation*}

For $n\geq 1$, letting $h(x)=-x\log x$, we have
\begin{equation*}
\begin{split}
&-\sum_{w_n\in B_n(X)}\mu([w])\log \mu([w])+\int \log f_n d\mu\\
&\leq \sum_{w\in B_n(X)}\mu([w]) \left(\log (\sup \{f_n(x):x\in [w]\})-\log \mu[w] \right)\\
&= Z_n(\F)\sum_{w\in B_n(X)}\frac {\sup \{f_n(x):x\in [w]\}}{Z_n(\F)}  h \left (\frac{\mu([w])} {\sup\{ f_n(x):x\in [w]\}}\right)\\
&\leq Z_n(\F)h\left(\sum_{w\in B_n(X)}\frac{\mu([w])}{Z_n(\F)} \right)\leq Z_n(\F)h\left(Z_n(\F)^{-1}\right)=\log Z_n(\F), 
\end{split}
\end{equation*}
where in the third inequality we use the concavity of $h$.
Therefore, for every $n\geq 1$ we have that $-\sum_{w\in B_n(X)}\mu([w])\log \mu([w])<\infty$. In particular, if we let $\alpha=\{[u]: u \in B_1(X)\}$, then $\alpha$ is a generator  for $\sigma$.
Hence
\begin{equation*}
h_{\mu}(\sigma)+\lim_{n\rightarrow \infty} \frac{1}{n}\int \log f_n d\mu \leq \lim_{n\rightarrow \infty} \left(-\frac{1}{n} \sum_{w\in B_n(X)}\mu([w])\log \mu([w]) +\frac{1}{n} \int \log (e^Cf_n) d\mu \right)\leq P(\F).
\end{equation*}
\end{proof}
\begin{lema}\label{upper1}
Let $(X,\sigma)$ and $\mathcal{F}=\{\log f_n\}_{n=1}^{\infty}$ be defined as in Proposition \ref{upper}.
If $P(\F)=\infty$, then for any  $\mu \in  M(X, \sigma)$ such that  $\limsup_{n\rightarrow  \infty}
(1/n)\int \log f_n d\mu>-\infty$,
\begin{equation}\label{inftycase}
h_{\mu}(\sigma)+\limsup_{n\rightarrow  \infty}\frac{1}{n}\int \log f_n d\mu\leq P(\F).
\end{equation}
If $\sup f_1< \infty$, then $\limsup$ can be replaced by $\lim$.
\end{lema}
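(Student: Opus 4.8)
The plan is to reduce the infinite-pressure case to the finite-pressure case already handled in Proposition \ref{upper}, by a truncation/exhaustion argument on the alphabet. First I would fix $\mu\in M(X,\sigma)$ with $\limsup_{n\to\infty}(1/n)\int\log f_n\,d\mu>-\infty$; if $h_\mu(\sigma)=\infty$ or the $\limsup$ is $+\infty$ there is nothing to prove since $P(\F)=\infty$, so assume both are finite. The inequality \eqref{inftycase} is then trivially true because the right-hand side is $\infty$, provided the left-hand side is finite — so the only content is to check that the left-hand side is well defined and finite, i.e.\ that $h_\mu(\sigma)<\infty$ and that the $\limsup$ is not $+\infty$. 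Here I would separate two subcases. If $\sup f_1<\infty$, then $\int(\log e^C f_1)^+\,d\mu<\infty$, so the subadditive ergodic theorem applied to $\F'=\{\log(e^Cf_n)\}$ (which is subadditive by \ref{a0}) gives that $\lim_{n\to\infty}(1/n)\int\log f_n\,d\mu$ exists in $[-\infty,\infty)$, and in particular equals the $\limsup$, which is therefore finite; this justifies replacing $\limsup$ by $\lim$ in that case. Moreover the concavity estimate from the proof of Proposition \ref{upper} (the chain of inequalities with $h(x)=-x\log x$) shows $-\sum_{w\in B_n(X)}\mu([w])\log\mu([w])\le \log Z_n(\F)-\int\log f_n\,d\mu$ whenever $Z_n(\F)<\infty$, so finiteness of $h_\mu(\sigma)$ follows as soon as we know $Z_n(\F)<\infty$ for some (hence all) $n$.

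The remaining, genuinely infinite, case is $Z_1(\F)=\infty$. Then I cannot run the global concavity estimate directly because $Z_n(\F)=\infty$. Instead I would exhaust: let $\{S_k\}_{k\ge1}$ be an increasing sequence of finite subsets of $\N$ with $\bigcup_k S_k=\N$, and consider the subsystems $X_k$ obtained by restricting to words in $S_k$ (for a countable Markov shift these are the $X_{A_{k_n}}$ of the technical remark preceding \ref{P1}; in general one takes $X_k=\{x\in X: x_i\in S_k\ \forall i\}$, which is a compact subshift). On each $X_k$ we have $Z_n(\F\vert_{X_k})<\infty$, so the concavity estimate of Proposition \ref{upper} applies verbatim to the partition $\alpha_k=\{[u]: u\in B_1(X_k)\}$ and yields, for the conditional measure, control of the truncated entropy. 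The point is that $\sup f_1<\infty$ fails here, so one works instead with the tail: since $\limsup_n(1/n)\int\log f_n\,d\mu>-\infty$ and $\F'$ is subadditive, Fekete/subadditivity gives $(1/n)\int\log f_n\,d\mu\le (1/1)\int\log f_1\,d\mu + C$ whenever the right side makes sense, and more usefully one shows that $\int\log f_1\,d\mu$ cannot be $+\infty$ under the $\limsup$ hypothesis, so $\int(\log f_1)^+\,d\mu<\infty$ after all, and then the $\sup f_1<\infty$ argument of the previous paragraph adapts with $\sup f_1$ replaced by the integrable majorant $(\log f_1)^+$. Combining, $h_\mu(\sigma)+\limsup_n(1/n)\int\log f_n\,d\mu$ is finite, hence $\le\infty=P(\F)$.

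Concretely, the steps in order: (1) dispose of the cases where the left side is $\infty$; (2) in the case $\sup f_1<\infty$, invoke the subadditive ergodic theorem for $\F'$ to get existence of $\lim_n(1/n)\int\log f_n\,d\mu$ and replace $\limsup$ by $\lim$; (3) show $\int(\log f_1)^+\,d\mu<\infty$ in general from the $\limsup>-\infty$ hypothesis together with subadditivity of $\F'$; (4) run the concavity/Jensen estimate of Proposition \ref{upper} on each compact truncation $X_k$ to bound $-\sum_{w\in B_n(X_k)}\mu_k([w])\log\mu_k([w])$, and let $k\to\infty$ using that $\alpha=\{[u]:u\in B_1(X)\}$ is a generator, to conclude $h_\mu(\sigma)<\infty$; (5) assemble to get \eqref{inftycase}. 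The main obstacle I anticipate is step (3)–(4): without $\sup f_1<\infty$ one must be careful that the various integrals $\int\log f_n\,d\mu$ are not $+\infty$ and that the exhaustion $X_k\uparrow X$ passes to the limit in the entropy term (monotone convergence of $-\mu([w])\log\mu([w])$ over the growing index set $B_n(X_k)$, and the fact that $h_\mu(\sigma|_{X_k})\uparrow h_\mu(\sigma)$ requires the generator property, which holds since cylinders on $B_1(X)$ generate the Borel $\sigma$-algebra of $X$). Everything else is a routine reprise of the proof of Proposition \ref{upper}.
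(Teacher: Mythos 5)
Your first two sentences already contain the entire proof, and they coincide with what the paper does: its proof of this lemma is literally ``The result is obvious.'' Since $P(\F)=\infty$, the inequality \eqref{inftycase} holds for \emph{any} value of the left-hand side in $[-\infty,+\infty]$; and the left-hand side is automatically well defined because $h_{\mu}(\sigma)\geq 0$ and the hypothesis $\limsup_{n\to\infty}(1/n)\int\log f_n\,d\mu>-\infty$ rules out an $\infty-\infty$ ambiguity. There is therefore nothing to ``check'': finiteness of the left-hand side is not asserted by the statement and is not needed for the inequality. Your step (2) for the second assertion is also fine and is the intended argument: when $\sup f_1<\infty$ one has $\int(\log e^{C}f_1)^{+}\,d\mu<\infty$, so the sequence $a_n=\int\log(e^{C}f_n)\,d\mu$ is subadditive with values in $[-\infty,\infty)$ and $a_n/n$ converges, so $\limsup$ may be replaced by $\lim$. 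Everything from your third sentence onward (the truncation/exhaustion over compact subalphabets, the concavity estimates, steps (3)--(4)) is unnecessary, and I would warn you that step (3) as stated is actually false: the hypothesis $\limsup_{n\to\infty}(1/n)\int\log f_n\,d\mu>-\infty$ is a lower bound and does not prevent $\int(\log f_1)^{+}\,d\mu=+\infty$, so you cannot deduce integrability of $(\log f_1)^{+}$ from it. That incorrect claim is not load-bearing here only because the inequality is trivial once the right-hand side is $+\infty$; if you strip the proof down to your opening observation plus step (2), it is correct and matches the paper.
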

\begin{proof}
The result  is obvious.
\end {proof}

To show the variational principle, we need the following variational principle for sequences on subshifts on finite alphabets (see \cite{cao}).
\begin{teo}\label{vpcompact} \cite{cao} 
Let $(X,\sigma)$ be a subshift  on a finite alphabet. If
$\F=\{\log f_n\}_{n=1}^{\infty}$ is a sequence on $X$ with tempered variation satisfying \ref{a0}, then 
\begin{equation*}
P(\F)=\sup_{\mu\in M(X, \sigma)} \left\{h_{\mu}(\sigma)+\lim_{n\rightarrow  \infty}\frac{1}{n}\int \log f_n d\mu \right\},
\end{equation*}
where $P(\F)$ is defined in Definition \ref{defpressure}. Then $P(\F)=-\infty$ if and only if $\lim_{n\rightarrow  \infty}(1/n)\int \log f_n d\mu=-\infty$ for all 
$\mu\in M(X, \sigma)$.
\end{teo}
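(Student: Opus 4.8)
The plan is to deduce this statement from the variational principle for sub-additive sequences of continuous functions on general compact dynamical systems established by Cao, Feng and Huang \cite{cao}. By \ref{a0} the sequence $\{\log(e^Cf_n)\}_{n=1}^{\infty}$ is a sub-additive sequence of continuous functions, and a subshift $X$ on a finite alphabet is a compact metric space on which $\sigma$ acts continuously, so the hypotheses of \cite{cao} are satisfied. Two routine reductions then bridge the gap with the formulation above: first, replacing $\log f_n$ by $\log(e^Cf_n)=\log f_n+C$ alters $\tfrac1n\int\log f_n\,d\mu$ and $\tfrac1n\log Z_n(\F)$ only by $C/n\to0$, so neither side of the claimed identity changes; second, one must identify the topological pressure of Definition \ref{defpressure} with the $(n,\epsilon)$-separated-set pressure used in \cite{cao}. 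Moreover, since $X$ is compact we have $\sup f_1<\infty$, hence $(\log(e^Cf_1))^{+}\in L^1(\mu)$ and Kingman's sub-additive ergodic theorem applied to $\{\log(e^Cf_n)\}$ guarantees that $\lim_{n\to\infty}\tfrac1n\int\log f_n\,d\mu$ exists in $[-\infty,\infty)$ for every $\mu\in M(X,\sigma)$; in particular the supremum in the statement is well posed.

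For the pressure identification: with the cylinder metric, a set is $(n,\epsilon)$-separated precisely when its points lie in pairwise distinct cylinders of length $n+L-1$, where $L=\lceil\log_2(1/\epsilon)\rceil$, and maximising $\sum_{x\in E}e^{\log(e^Cf_n)(x)}$ over such sets $E$ yields $e^C\sum_{u\in B_{n+L-1}(X)}\sup\{f_n(x):x\in[u]\}$. Since each cylinder of length $n$ contains at most $|B_1(X)|^{L-1}$ cylinders of length $n+L-1$ and $\sup_{[u]}f_n\le\sup_{[w]}f_n$ whenever $[u]\subset[w]$, one gets $Z_n(\F)\le\sum_{u\in B_{n+L-1}(X)}\sup_{[u]}f_n\le|B_1(X)|^{L-1}Z_n(\F)$. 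As $L$ is fixed once $\epsilon$ is, taking $\tfrac1n\log(\cdot)$, then $\limsup_{n\to\infty}$, then $\epsilon\to0$ shows the separated-set pressure equals $\limsup_{n\to\infty}\tfrac1n\log Z_n(\F)=P(\F)$; by sub-additivity $Z_{n+m}(\F)\le e^CZ_n(\F)Z_m(\F)$, so in fact $P(\F)$ is a genuine limit and $P(\F)<\infty$. Combining this with the two reductions and \cite{cao} gives the first displayed equality.

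For the final sentence: if $P(\F)=-\infty$, then since $h_\mu(\sigma)\ge0$ and is finite on a finite-alphabet subshift, the equality forces $\lim_{n\to\infty}\tfrac1n\int\log f_n\,d\mu=-\infty$ for every $\mu\in M(X,\sigma)$; conversely, if this limit equals $-\infty$ for every $\mu$, then each quantity $h_\mu(\sigma)+\lim_{n\to\infty}\tfrac1n\int\log f_n\,d\mu$ equals $-\infty$, and since $M(X,\sigma)\neq\emptyset$ for a nonempty compact subshift the supremum equals $-\infty$, i.e. $P(\F)=-\infty$.

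The only genuinely technical point is the counting estimate identifying the cylinder-sum pressure of Definition \ref{defpressure} with the separated-set pressure of \cite{cao}; everything else is bookkeeping. Alternatively one can avoid citing \cite{cao} and argue directly: the upper bound $h_\mu(\sigma)+\lim_n\tfrac1n\int\log f_n\,d\mu\le P(\F)$ is exactly the concavity/generating-partition argument in the proof of Proposition \ref{upper}, which uses only \ref{a0} and the (here automatic) finiteness of $Z_n(\F)$ and $\sup f_1$; and the lower bound is obtained by forming $\nu_n=Z_n(\F)^{-1}\sum_{w\in B_n(X)}\sup_{[w]}f_n\cdot\delta_{x_w}$ with $x_w\in[w]$, setting $\mu_n=\tfrac1n\sum_{i=0}^{n-1}\nu_n\circ\sigma^{-i}$, passing to a weak-$*$ limit $\mu$ of a subsequence along which $\tfrac1n\log Z_n(\F)\to P(\F)$, and using concavity of entropy, sub-additivity of $\{\log(e^Cf_n)\}$, and upper semicontinuity of $h_{\bullet}(\sigma)$ on the (expansive) finite-alphabet subshift to conclude $h_\mu(\sigma)+\lim_q\tfrac1q\int\log f_q\,d\mu\ge P(\F)$.
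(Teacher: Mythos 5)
The paper offers no proof of this theorem beyond the citation to Cao--Feng--Huang, and your proposal takes exactly that route: reduce to their sub-additive variational principle on a compact system, with the constant shift $\F\mapsto\F+C$, the identification of the cylinder-sum pressure with the separated-set pressure, and Kingman's theorem supplied as the (correct) bookkeeping. Your counting estimate relating $Z_n(\F)$ to sums over length-$(n+L-1)$ cylinders and the deduction of the final equivalence from $0\le h_\mu(\sigma)<\infty$ are both sound, so the proposal is correct and matches the paper's (implicit) argument.
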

In Theorem \ref{vpcompact} an equilibrium measure for $\F$ (see Definition \ref{dgibbs}) always exists.
\begin{teo}\label{vp1}
Let $(X,\sigma)$ be  an irreducible
countable Markov shift and 
$\F=\{\log f_n\}_{n=1}^{\infty}$ be a Bowen sequence on $X$ satisfying \ref{a0} and \ref{P1}.
If $P(\F)<\infty$, then
\begin{equation}\label{firstvp}
P_{G}(\F)=P(\F)=\sup_{\mu\in M(X, \sigma)} \left\{h_{\mu}(\sigma)+\lim_{n\rightarrow  \infty}\frac{1}{n}\int \log f_n d\mu: \lim_{n\rightarrow  \infty}\frac{1}{n}\int \log f_n d\mu>-\infty \right\}.
\end{equation}
In particular, if
$\F$  satisfies  \ref{a1}  with the strong specification, then $\limsup$ in  (\ref{gurev}) of the definition  $P_{G}(\F)$ can be replaced by lim. 
If $P(\F)=\infty$, then 
\begin{equation}\label{vpinf}
P_{G}(\F)=P(\F)=\sup_{\mu\in M(X, \sigma)} \left\{h_{\mu}(\sigma)+\limsup_{n \rightarrow  \infty}\frac{1}{n}\int \log f_n d\mu: \limsup_{n\rightarrow  \infty}\frac{1}{n}\int \log f_n d\mu>-\infty \right\}.
\end{equation}
In particular, if  $\sup f_1<\infty$,  equation (\ref{firstvp}) holds for the case when $P(\F)=\infty$.
\end{teo}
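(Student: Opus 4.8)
The plan is to assemble the statement from results already in place. First I would note that, since $\F$ is a Bowen sequence on the irreducible countable Markov shift $X$ satisfying \ref{P1}, Lemma \ref{small} gives that $\F$ satisfies \ref{a1} on $X$, hence also \ref{a4} (take $D_{n,m}$ equal to the constant $D$). In particular the hypotheses of Propositions \ref{gdefi}, \ref{cha}, \ref{upper} and of Lemma \ref{upper1} are all met for $\F$ on $X$. The equality $P_{G}(\F)=P(\F)$ then follows at once: when $P(\F)<\infty$ it is Proposition \ref{gdefi}, and when $P(\F)=\infty$ — which by Proposition \ref{cha} is exactly the case $Z_1(\F)=\infty$ — it is Lemma \ref{gdefinf}; the assertion that $\limsup$ may be replaced by $\lim$ in (\ref{gurev}) when \ref{a1} holds with the strong specification is the last part of Proposition \ref{gdefi}.

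Next, the inequality $\leq$ in (\ref{firstvp}), respectively in (\ref{vpinf}), is immediate: when $P(\F)<\infty$ it is Proposition \ref{upper}, and when $P(\F)=\infty$ it is Lemma \ref{upper1}, which also gives that $\limsup$ can be upgraded to $\lim$ as soon as $\sup f_1<\infty$.

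The core of the proof is the reverse inequality, for which I would use the approximation by compact subsystems. By Proposition \ref{approx1}, $P(\F)=\sup_{n\in\N}P(\F\vert_{X_{A_{l_n}}})$ and $P(\F)\neq -\infty$; moreover each $P(\F\vert_{X_{A_{l_n}}})$ is finite, since $f_1$ is continuous on the compact set $X_{A_{l_n}}$, so $Z_1(\F\vert_{X_{A_{l_n}}})$ is a finite sum of finite terms and sub-additivity of $\F+C$ forces $P(\F\vert_{X_{A_{l_n}}})<\infty$. Fix $\epsilon>0$, and in the case $P(\F)=\infty$ fix in addition an arbitrarily large $L>0$; choose $n$ with $P(\F\vert_{X_{A_{l_n}}})>P(\F)-\epsilon$ (resp.\ $>L$). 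As $X_{A_{l_n}}$ is a subshift on a finite alphabet and $\F\vert_{X_{A_{l_n}}}$ satisfies \ref{a0} and has tempered variation, Theorem \ref{vpcompact} produces $\mu\in M(X_{A_{l_n}},\sigma)$, which we regard as an element of $M(X,\sigma)$, with
\[
h_{\mu}(\sigma)+\lim_{m\to\infty}\frac{1}{m}\int\log f_m\,d\mu>P(\F\vert_{X_{A_{l_n}}})-\epsilon .
\]
Here the limit exists and is finite: compactness gives $\sup\{f_1(x):x\in X_{A_{l_n}}\}<\infty$, hence $\int(\log(e^{C}f_1))^{+}\,d\mu<\infty$, so the sub-additive ergodic theorem applied to $\F+C$ shows the limit exists in $[-\infty,\infty)$, and it cannot be $-\infty$ because then the left-hand side would be $-\infty$ while $P(\F\vert_{X_{A_{l_n}}})-\epsilon>-\infty$. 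Thus $\mu$ is an admissible competitor in the constrained supremum of (\ref{firstvp}) (resp.\ of (\ref{vpinf}), where $\limsup$ and $\lim$ coincide for this $\mu$), and it shows that this supremum exceeds $P(\F)-2\epsilon$ (resp.\ $L-\epsilon$). Letting $\epsilon\to 0$ (resp.\ $L\to\infty$) yields the reverse inequality, and hence all the displayed equalities. Finally, when $P(\F)=\infty$ and $\sup f_1<\infty$, both the upper bound of Lemma \ref{upper1} and the approximating measures just constructed may be phrased with $\lim$, so (\ref{firstvp}) holds in that case too.

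I expect the only delicate point to be the bookkeeping in this last step: verifying that each restricted sequence falls under the hypotheses of Theorem \ref{vpcompact}, confirming that the measures supplied by the compact variational principle have $\lim_{m}(1/m)\int\log f_m\,d\mu$ finite (so that they are genuine competitors in the constrained supremum rather than being excluded by the $>-\infty$ condition), and checking that the divergent case $P(\F)=\infty$ is handled uniformly by running the same scheme with $L\to\infty$ in place of $\epsilon\to 0$.
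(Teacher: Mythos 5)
Your proposal is correct and follows essentially the same route as the paper: upper bound from Proposition \ref{upper} (resp.\ Lemma \ref{upper1}), lower bound via the compact approximation of Proposition \ref{approx1} combined with the finite-alphabet variational principle of Theorem \ref{vpcompact}, and the identification $P_G(\F)=P(\F)$ from Proposition \ref{gdefi} and Lemma \ref{gdefinf}. The only cosmetic difference is that the paper takes an exact equilibrium measure for $\F\vert_{Y}$ on the compact approximating shift rather than an $\epsilon$-optimal one, which changes nothing.
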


\begin{proof}
First assume that $P(\F)< \infty$. Let $\epsilon>0$. Applying Proposition \ref{approx1}, there exists a finite state Markov shift $Y$ such that $P(\F)-P(\F\vert_Y)<\epsilon$.
Let $m$ be an equilibrium measure for $\F\vert_Y$. 
Since $m\in M(X, \sigma)$ and $\lim_{n\rightarrow  \infty}(1/n)\int \log f_n dm >-\infty$,  we obtain
\begin{align*}
&h_{m}(\sigma_{Y})+\lim_{n\rightarrow  \infty}\frac{1}{n}\int \log f_n dm \\
&\leq 
\sup_{\mu\in M(X, \sigma)} \left\{h_{\mu}(\sigma)+\lim_{n\rightarrow  \infty}\frac{1}{n}\int \log f_n d\mu: \lim_{n\rightarrow  \infty}\frac{1}{n}\int \log f_n d\mu>-\infty \right\}.
\end{align*}
Thus
\begin{equation*}
\begin{split}
P(\F)-\epsilon \leq P(\F\vert_{Y})
&\leq 
\sup_{\mu\in M(X, \sigma)} \left\{h_{\mu}(\sigma)+\lim_{n\rightarrow  \infty}\frac{1}{n}\int \log f_n d\mu: \lim_{n\rightarrow  \infty} \frac{1}{n}\int \log f_n d\mu>-\infty \right\} 
\leq P(\F).
\end{split}
\end{equation*}
Hence  we obtain the result.
Equation (\ref{vpinf}) holds for $P(\F)=\infty$ by similar arguments using Lemma \ref{upper1}. The last statement is obvious.
\end{proof}
\begin{coro} \label{cor:5}
Let $(X,\sigma)$ be a 
countable Markov shift. If $\F$ a Bowen sequence on $X$ satisfying \ref{a0}, \ref{a1} and \ref{a3}, then Propositions \ref{approx1} and \ref{upper}, Lemma \ref{upper1}
and Theorem \ref{vp1} hold. 
\end{coro}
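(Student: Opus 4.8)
The plan is to show that a Bowen sequence $\F$ satisfying \ref{a0}, \ref{a1} and \ref{a3} on a countable Markov shift is automatically a sequence satisfying \ref{P1}, so that all the cited results (Propositions \ref{approx1} and \ref{upper}, Lemma \ref{upper1}, Theorem \ref{vp1}) apply verbatim. Recall from the technical remark at the beginning of Section \ref{vp:spc} that, since $X$ is irreducible, after rearranging $\N$ there is a transition matrix $A$ and an increasing sequence $\{k_n\}_{n=1}^{\infty}$ with each $A_{k_n} := A\vert_{\{1,\dots,k_n\}\times\{1,\dots,k_n\}}$ irreducible. (If $X$ is not irreducible to begin with, one first notes that \ref{a1} forces $X$ to have the weak specification property, hence to be irreducible, by the Remark following \ref{a5}; and \ref{a3} forces $X$ finitely irreducible.) The task is thus to produce a single pair of uniform constants $D_1, p_1 > 0$ controlling the restrictions $\F\vert_{X_{A_{l_n}}}$.

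First I would use \ref{a3}: let $W \subset \bigcup_{k=0}^{p} B_k(X)$ be the finite set of words witnessing \ref{a1}, with associated constant $D > 0$ and specification bound $p \in \N$. Since $W$ is finite, every $w \in W$ uses only finitely many symbols of the alphabet; hence there is an index $N_0$ such that $W \subset B(X_{A_{k_{N_0}}})$, i.e. every connecting word $w$ already lies in the subshift $X_{A_{k_{N_0}}}$. Now set $l_n := k_{N_0 + n - 1}$ for $n \in \N$, so that $\{l_n\}$ is an increasing subsequence of $\{k_n\}$, each $A_{l_n}$ is irreducible, and $W \subset B(X_{A_{l_n}})$ for every $n$.

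Next I would verify that $\F\vert_{X_{A_{l_n}}}$ satisfies \ref{a1} with constants independent of $n$. Given $u \in B_a(X_{A_{l_n}})$, $v \in B_b(X_{A_{l_n}})$, these are in particular allowable words of $X$, so \ref{a1} yields $w \in W$ with $0 \leq |w| \leq p$ and
\begin{equation*}
\sup\{f_{a+b+|w|}(x): x \in [uwv]\} \geq D \sup\{f_a(x): x \in [u]\}\sup\{f_b(x): x \in [v]\},
\end{equation*}
where all suprema are over $X$. Since $w \in W \subset B(X_{A_{l_n}})$, the word $uwv$ is allowable in $X_{A_{l_n}}$, and because $\F$ is a Bowen sequence with constant $M$, replacing each $X$-supremum over a cylinder $[z]$ of length $|z|$ by $\sup\{f_{|z|}\vert_{X_{A_{l_n}}}(x): x \in [z]\}$ changes it by a factor in $[M^{-1}, M]$. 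Hence \ref{a1} holds for $\F\vert_{X_{A_{l_n}}}$ with constant $D_1 := D/M^{2}$ and specification number $p_1 := p$, uniformly in $n$. This is exactly condition \ref{P1}, already noted (without proof) in the sentence preceding Lemma \ref{capro}.

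The only mild obstacle is the bookkeeping with the Bowen constant when passing suprema between $X$ and $X_{A_{l_n}}$, and making sure the uniform choice $p_1 = p$, $D_1 = D/M^2$ is legitimate for all $l_n$ at once — but this is immediate once $W$ has been absorbed into a fixed $X_{A_{k_{N_0}}}$. With \ref{P1} in hand, Propositions \ref{approx1} and \ref{upper}, Lemma \ref{upper1} and Theorem \ref{vp1} apply directly to $\F$, which is the assertion of the corollary.
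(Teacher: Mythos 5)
Your proposal is correct and follows essentially the same route as the paper: the paper's proof likewise reduces the corollary to verifying \ref{P1} by choosing $l_q$ large enough that $\{1,\dots,l_q\}$ contains all symbols appearing in $W\setminus\{\varepsilon\}$, after which $\F\vert_{X_{A_{l_n}}}$ satisfies \ref{a1} with a uniform constant (the paper records $D/M$ where you write $D/M^2$; only one Bowen factor is actually needed since the suprema over $[u]$ and $[v]$ are only being decreased when restricted to $X_{A_{l_n}}$, but either constant suffices for \ref{P1}). Your additional remark that \ref{a1} forces irreducibility is a point the paper leaves implicit.
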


\begin{proof}
It suffices to show that  \ref{P1} is satisfied.  Let $W$ be a finite set from \ref{a3}. 
Since $X$ is an irreducible countable Markov shift,
let $A_{l_n}$ be defined as in the beginning of this section.
Take $l_q$ large enough 
so that $\{1, \dots, l_{q}\}$ contains all the symbols that appear in $W-\{\varepsilon\}$.
Then, for  $n\geq q$, $\F\vert_{X_{A_{l_n}}}$ satisfies \ref {a1} replacing $D$ by $D/M$. 
\end{proof}

\begin{rem}
$(X,\sigma)$ in Corollary \ref{cor:5} is finitely irreducible by \ref{a1} and  \ref{a3}. The case when $X$ is the full shift on a countable alphabet has been studied by \cite{KR}.
\end{rem}

\begin{eje}\label{exampleinfty}
In Example \ref{nofinite},
 the sequence $\Phi$ defined on the countable Markov shift $Y$ satisfies  \ref{a0}.  Here we show that \ref{P1} holds. 
Let $X_n$ be the subshift  of $X$ on the symbols $\{F_1, \dots, F_n\}$. Let $Y_n=\pi (X_n)$.  Then $Y_n$ is an irreducible finite Markov shift on $\{1, \dots, n\}$.
For $n\geq 3$, each $\Phi\vert_{Y_{n}}$  satisfies \ref{a1} with $p=3$ and $D=1$. Hence \ref{P1} is satisfied. Thus Proposition \ref {approx1} and Theorem \ref{vp1} hold. 
Since \ref{a3} is not satisfied, by Lemma \ref{cha1}, $P(\Phi)=P_G(\Phi)=\infty$ and equation (\ref{vpinf}) holds.
\end{eje}

\begin{eje}\label{fie1}
In Example  \ref{e1}, the sequence 
$\Psi =\{\log \psi_n\}_{n=1}^{\infty}$ defined on the countable Markov shift $Y$ satisfies \ref{a0}, \ref {a1} and \ref {a3}. 
 Hence Proposition \ref {approx1} and Theorem \ref{vp1} hold.  Since $\Psi$ satisfies  \ref{a1}  with the strong specification, $P(\Psi)=P_G(\Psi)=\lim_{n\rightarrow \infty}(1/n)\log Z_n(\Psi, a)$
for all $a \in \N$.
Since $k\geq 3$, we obtain $Z_1(\Psi)=\sum_{i\in\N}(1/\vert \pi^{-1}(i)\vert^{k-1})\leq \sum_{i\in\N}(1/i^{k-1})<\infty$. Therefore, $P(\Psi)<\infty$ and 
equation (\ref{firstvp}) holds.
\end{eje}

\subsection{Variational principle for finitely irreducible countable sofic shifts} \label{sofic}
In this section, we prove the variational principle for sequences $\F$ with tempered variation (see Definition \ref{bowen}) on finitely irreducible countable sofic shifts (see Definition \ref{def:sofic}). Therefore  the space $X$ 
is not a Markov shift and it has the finiteness property. The regularity condition on $\F$ is weaker than what was assumed in Section \ref{vp:spc}.  Our approach here is based on the proof of  \cite[Theorem 1.2] {mu2}.  


Let  $(X, \sigma)$ be an irreducible countable sofic shift. Then by Definition \ref{def:sofic} there exist an irreducible countable Markov shift  $(\bar{X}, \sigma_{\bar{X}})$ and a one-block factor map $\pi:\bar{X} \rightarrow X$ such that $\vert \pi^{-1}(i)\vert<\infty$ for each $i\in \N$. Rearranging the set $\N$, there is a transition matrix $A$ for $\bar{X}$ and an increasing sequence 
$\{l_{n}\}_{n=1}^{\infty}$ such that the matrix $A_{l_n}=A\vert_{\{1, \dots, l_{n}\}\times \{1, \dots, l_{n}\}}$ is irreducible.
For each $n\in\N$, let  $S_{l_n}=\{\pi(i): 1\leq i \leq \l_n\}$.
Then $(\pi(\bar{X}_{A_{l_n}}), \sigma_{\pi(\bar{X}_{A_{l_n}})})$ is a sofic shift on the set $S_{l_n}$ of finitely many symbols.
Clearly,  $\pi(\bar{X}_{A_{l_n}})\subseteq \pi(\bar{X}_{A_{l_{n+1}}}) \subset X$ and $\N=\cup_{n\in\N} S_{l_n}$.
We note that we can extract a subsequence $\{l_{n_j}\}_{j=1}^{\infty}$ such that $\pi(\bar{X}_{A_{l_{n_j}}})\subset \pi(\bar{X}_{A_{l_{n_{j+1}}}}) \subset X$ for all $n_j, j\in \N$.

We continue to use the notation above throughout this section. The following lemma is important and will be also applied in Section \ref{sectiongibbs}.
\begin{lema}\label{forsofic}
Let  $(X, \sigma)$ be an  irreducible countable sofic shift and  $\F=\{\log f_n\}_{n=1}^{\infty}$  a sequence on $X$ with tempered variation  satisfying \ref{a4} and \ref{a5}. 
Let $p$ be defined as in \ref {a4} and $W$ be defined as in \ref{a5}.
  Then there exists $q\in \N$ such that for each $k\geq q$ there exists an irreducible subshift $(X_{l_k}, \sigma_{X_{l_k}})$ on the set $S_{l_k}$ of finitely many symbols such that  $\pi(\bar{X}_{A_{l_k}})\subseteq X_{l_k} \subset X$.  Moreover, for any $n, m\in\N, k\geq q, u\in B_{n}(X_{l_k}), v\in B_{m}(X_{l_k})$, there exists $w\in W$ such that $uwv$ is an allowable word of $X_{l_k}$ and 
\begin{equation}\label{keyforsofic}
\sup\{f_{n+m+\vert w\vert }\vert_{X_{l_k}}(x) :x\in [uwv]\} \geq \frac{D_{n,m}}{M_{n+m+p}}\sup\{f_{n}\vert_{X_{l_k}}(x) :x\in [u]\}\sup\{f_{m}\vert _{X_{l_k}}(x) :x\in [v]\},
\end{equation}
where $M_n$ is defined as in Definition \ref{bowen}.
\end{lema}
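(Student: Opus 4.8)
The plan is to exploit the fact that $W$ is a \emph{finite} set of allowable words in $X$, so that all the symbols appearing in words of $W$ belong to some finite subalphabet, and then to fold these symbols into the exhausting sequence of finite-alphabet subshifts coming from the countable Markov cover $\bar X$. First I would fix $p$ and $W$ as in \ref{a4} and \ref{a5}. Since $W \subset \bigcup_{k=0}^p B_k(X)$ is finite, only finitely many symbols of $X$ occur in the words of $W$; because $\N = \bigcup_{n} S_{l_n}$ and the sets $S_{l_n}$ are increasing, there is some index $q \in \N$ such that every symbol occurring in $W$ lies in $S_{l_q}$ — and hence in $S_{l_k}$ for all $k \geq q$. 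This choice of $q$ is the crux of the construction.

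Next, for each $k \geq q$ I would define $X_{l_k}$ to be the (one-sided) subshift of $X$ on the finite symbol set $S_{l_k}$ generated by \emph{all} allowable words of $X$ using only these symbols — equivalently, $X_{l_k} = \{ x \in X : x_i \in S_{l_k} \text{ for all } i \}$. By construction $X_{l_k}$ is a subshift on the finite alphabet $S_{l_k}$, it contains $\pi(\bar X_{A_{l_k}})$ (since $\pi(\bar X_{A_{l_k}})$ uses only symbols in $S_{l_k}$ and is $\sigma$-invariant inside $X$), and $X_{l_k} \subset X$. Irreducibility of $X_{l_k}$ would follow from \ref{a4}/\ref{a5}: given $u,v \in B(X_{l_k})$, condition \ref{a4} with the set $W$ of \ref{a5} produces $w \in W$ with $uwv \in B(X)$; since $u,v$ use symbols from $S_{l_k}$ and $w$ uses symbols from $S_{l_q} \subseteq S_{l_k}$ (by the choice of $q$), the word $uwv$ uses only symbols of $S_{l_k}$, hence $uwv \in B(X_{l_k})$. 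That gives irreducibility of $X_{l_k}$ and simultaneously sets up the specification-type inequality on $X_{l_k}$.

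For the displayed inequality \eqref{keyforsofic}, I would take $k \geq q$, $u \in B_n(X_{l_k})$, $v \in B_m(X_{l_k})$, and apply \ref{a4} to the pair $u,v$ viewed as words of $X$: this yields $w \in W$ with $0 \le |w| \le p$, $uwv \in B(X)$, and
\[
\sup\{f_{n+m+|w|}(x): x \in [uwv]_X\} \geq D_{n,m}\, \sup\{f_n(x): x \in [u]_X\}\, \sup\{f_m(x): x \in [v]_X\}.
\]
By the argument above $uwv \in B(X_{l_k})$, so the cylinders in $X_{l_k}$ are nonempty. The passage from suprema over cylinders in $X$ to suprema over cylinders in $X_{l_k}$ loses at most a factor controlled by the tempered-variation constants $M_j$: for any word $z$ of length $j$, $\sup\{f_j|_{X_{l_k}}(x): x \in [z]\} \le \sup\{f_j(x): x \in [z]_X\} \le M_j \sup\{f_j|_{X_{l_k}}(x): x \in [z]\}$. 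Applying the upper bound on the left-hand side with $j = n+m+|w| \le n+m+p$ and $M_j \le M_{n+m+p}$ (using monotonicity $M_j \le M_{j+1}$), and the trivial inequality $\sup\{f_n|_{X_{l_k}}(x):x\in[u]\} \le \sup\{f_n(x):x\in[u]_X\}$ on the right, produces exactly \eqref{keyforsofic}.

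The main obstacle I anticipate is purely bookkeeping rather than conceptual: one must be careful that the subshifts $X_{l_k}$ are genuinely irreducible (not merely that $uwv$ is allowable but that it stays inside the finite alphabet), which is precisely why the index $q$ must be chosen large enough to absorb all symbols of $W$; and one must track which constant ($D_{n,m}$ versus $M_{n+m+p}$) degrades in which direction when restricting from $X$ to $X_{l_k}$. No delicate estimate is needed beyond the two-sided comparison of suprema furnished by tempered variation, which is already available from Definition \ref{bowen}.
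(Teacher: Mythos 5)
Your overall strategy is the same as the paper's: fix $q$ so that the finitely many symbols occurring in $W$ are absorbed into $S_{l_q}$, take the restriction of $X$ to the finite alphabet $S_{l_k}$ for $k\geq q$, and pass from \ref{a4} on $X$ to the restricted shift at the cost of the factor $M_{n+m+p}$ coming from tempered variation. The set you define, $X_{l_k}=\{x\in X: x_i\in S_{l_k} \text{ for all } i\}$, coincides with the one the paper constructs (the paper builds it as $\pi(\bar Y_P)$ with $P=\pi^{-1}(S_{l_k})$, which is the same set), and your derivation of the inequality \eqref{keyforsofic} from \ref{a4} together with the two-sided comparison of suprema over cylinders in $X$ and in $X_{l_k}$ is exactly the paper's computation.

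The gap is in the step ``$uwv\in B(X)$ and all symbols of $uwv$ lie in $S_{l_k}$, hence $uwv\in B(X_{l_k})$.'' By the paper's definition, $uwv\in B(X_{l_k})$ means $[uwv]\cap X_{l_k}\neq\emptyset$, i.e.\ $uwv$ must admit an \emph{infinite continuation inside $X$ using only symbols of $S_{l_k}$}. For a general (non-Markov) subshift this does not follow from $uwv\in B(X)$ plus the symbols of $uwv$ lying in $S_{l_k}$: every point of $X$ in the cylinder $[uwv]$ could be forced to leave the subalphabet eventually, even though $u$ and $v$ separately extend within $S_{l_k}$ (one cannot simply splice the tail of a point of $[v]\cap X_{l_k}$ onto $uw$, precisely because $X$ is not Markov). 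This is the one place where the sofic structure and the hypothesis $\vert\pi^{-1}(i)\vert<\infty$ must enter, and your argument uses neither. The paper closes this by lifting: it takes a point of $[uwv]\cap X$, lifts it to the Markov cover $\bar X$, notes that every symbol of the lifted word lies in the finite set $P=\pi^{-1}(S_{l_k})$ (finiteness of the fibers is used here), concludes that the lifted word belongs to the finite-state Markov shift $\bar Y_P$ on the alphabet $P$, and projects back to get $uwv\in B(\pi(\bar Y_P))=B(X_{l_k})$. You need to reproduce this lifting argument (or otherwise justify the extension of $uwv$ within $S_{l_k}$); without it, the irreducibility of $X_{l_k}$ and the non-vacuousness of the supremum on the left-hand side of \eqref{keyforsofic} are both unproved.
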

\begin{rem}\label{casebowen} If $\F$ is a Bowen sequence, (\ref{keyforsofic}) implies that  \ref{a1} holds for $\F\vert_{X_{l_k}}, k\geq q$, replacing $D$ in \ref{a1} by $D/M$.
\end{rem}
\begin{proof}
Since $(X, \sigma)$ is an irreducible countable sofic shift, there exist an irreducible countable Markov shift  $(\bar{X}, \sigma_{\bar{X}})$ and a one-block factor map $\pi:\bar{X} \rightarrow X$ such that $\vert \pi^{-1}(i)\vert<\infty$ for each $i\in \N$. Since $W$ is a finite set, only finitely many  symbols appear in  $W$. We first consider the case when $W$ contains a nonempty allowable word.  
Call $S_W$ the set of symbols that appear in $W-\{\varepsilon\}$. 
Let $\pi^{-1}(S_W)$ be the set of preimages of the symbols of $S_W$ in $\bar{X}$. Then  $\pi^{-1}(S_W)$ is a finite set because $\vert \pi^{-1}(i)\vert<\infty$ for each $i\in \N$.

Now consider  a transition matrix $A$ for $\bar{X}$ and an increasing sequence 
$\{l_{k}\}_{k=1}^{\infty}$ such that the matrix $A_{l_k}=A\vert_{\{1, \dots, l_{k}\}\times \{1, \dots, l_{k}\}}$ is irreducible for each $l_k$. Then there exists $q\in\N$ such that  $\pi^{-1}(S_W)\subset \{1, \dots, l_{k}\}$ for all $k\geq q$. Thus, for $k\geq q$ the subshift $(\pi(\bar{X}_{A_{l_k}}), \sigma_{\pi(\bar{X}_{A_{l_k}})})$ is a sofic shift on the set $S_{l_k}$ of finitely many symbols that contains  $S_W$.
For a fixed $k\geq q$, consider  the set $\pi^{-1}(S_{l_k})$ of the preimages of the set $S_{l_k}$ and call it $P$. Then $P$ contains $\{1, \dots, l_{k}\}$ and it is a finite set.   Let $\bar{Y}_P\subset X$ be the finite state Markov shift on the symbols of $P$ and
define $Y=\pi (\bar{Y}_P)$. Then $Y$ is a subshift on the set of $S_{l_k}$ of finitely many symbols which contains $S_W$. Observe that $ \pi(\bar{X}_{A_{l_k}})  \subseteq Y \subset X$.

We observe that $Y$ is irreducible. Fix $n, m\in \N$.  Let $u=u_{1}\dots u_{n}\in B_{n}(Y)$ and $v=v_1\dots v_{m}\in B_{m}(Y)$.
Since these are allowable words of $X$, there exists $w =w_1\dots w_l\in W$, $0\leq l\leq p$, such that $uwv$ is allowable in $X$ and  \ref{a4} holds.
Since $uwv$ is allowable  in $X$, there exists $\bar{u}_1\dots \bar{u}_{n}\bar{w}_1\dots \bar{w}_l\bar{v}_1\dots \bar{v}_{m}\in B_{n+m+l}(\bar{X})$ such that
$\pi ( \bar{u}_1\dots \bar{u}_{n}\bar{w}_1\dots \bar{w}_l\bar{v}_1\dots \bar{v}_{m})=uwv$. Since all the symbols that appear in the preimages of $u, v, w$ are in the set $P$, we obtain that $ \bar{u}_1\dots \bar{u}_{n}\bar{w}_1\dots \bar{w}_l\bar{v}_1\dots \bar{v}_{m} \in B_{n+m+l}(\bar {Y}_P)$. Therefore, $uwv$ is allowable in $Y$ and $Y$ is irreducible. Using the property of tempered variation, 
\begin{align*}
\sup\{f_{n+m+\vert w\vert}\vert_{Y}(y) :y\in [uwv]\} 
&\geq \frac{1}{M_{n+m+p}}\sup\{f_{n+m+\vert w\vert}(x) :x\in [uwv]\} \\
&\geq \frac{D_{n,m}}{M_{n+m+p}} \sup\{f_{n}(x) :x\in [u]\}\sup\{f_{m}(x) :x\in [v]\}\\
&\geq  
 \frac{D_{n,m}}{M_{n+m+p}} \sup\{f_{n}\vert_{Y}(y) :y\in [u]\}\sup\{f_{m}\vert _{Y}(y) :y\in [v]\}.
\end{align*}
For each $k\geq q$, we can construct a such $Y$. 
Setting $Y=X_{l_k}$, we obtain the results. 
If $W=\{\varepsilon\}$, we make a similar argument.  

\end{proof}
Under the setting of Lemma \ref{forsofic} , we define the topological pressure $P(\F)$ as in Definition \ref{defpressure}. By Proposition \ref{cha} we have $Z_1(\F)<\infty$ if and only if $P(\F)<\infty $. We note that if $Z_1(\F)=\infty$, then $P(\F)=\infty$ and the proof is given in that of Theorem \ref{thmsofic}.

\begin{teo}\label{thmsofic}
Let  $(X, \sigma)$ be an irreducible countable sofic shift. If $\F=\{\log f_n\}_{n=1}^{\infty}$ is a sequence on $X$ with tempered variation satisfying \ref{a0}, \ref{a4} and \ref{a5},
then 
\begin{align}\label{aprox2}
P(\F)&=\sup_{\substack{l_n\\ n\geq q}} \{P(\F\vert_{X_{l_n}})\}\\
&=\sup\{P(\F\vert_{Y}): Y\subset X \text{ is an irreducible sofic shift on a finite alphabet} \}, \label{approsofic}
\end{align}
where $X_{l_n}, q$ are defined as in Lemma \ref{forsofic}, and $P(\F)\neq -\infty.$
The variational principle holds.
If $P(\F)<\infty$, then
\begin{equation} \label{vp:sofic}
P(\F)=\sup_{\mu\in M(X, \sigma)} \left\{h_{\mu}(\sigma)+\lim_{n\rightarrow  \infty}\frac{1}{n}\int \log f_n d\mu: \lim_{n\rightarrow  \infty}\frac{1}{n}\int \log f_n d\mu>-\infty \right\}.
\end{equation}
If $P(\F)=\infty$, then 
\begin{equation}\label{vp:soficinf}
P(\F)=\sup_{\mu\in M(X, \sigma)} \left\{h_{\mu}(\sigma)+\limsup_{n \rightarrow  \infty}\frac{1}{n}\int \log f_n d\mu: \limsup_{n\rightarrow  \infty}\frac{1}{n}\int \log f_n d\mu>-\infty \right\}.
\end{equation}
 \end{teo}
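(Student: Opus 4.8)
The plan is to mirror the proof of Theorem \ref{vp1}, with the uniform connection estimate (\ref{keyforsofic}) of Lemma \ref{forsofic} playing the role of \ref{P1}/Proposition \ref{approx1}, and with the finite‑alphabet sofic subshifts $X_{l_n}$ produced by that lemma playing the role of the finite Markov approximants. The decisive structural remark is that each $X_{l_n}$ is a subshift on a \emph{finite} alphabet, hence \emph{compact}; since $\F\vert_{X_{l_n}}$ has tempered variation and satisfies \ref{a0}, the compact variational principle (Theorem \ref{vpcompact}) applies to it and yields both
\[
P(\F\vert_{X_{l_n}})=\sup_{\mu\in M(X_{l_n},\sigma)}\Bigl\{h_\mu(\sigma)+\lim_{m\to\infty}\tfrac1m\textstyle\int\log f_m\,d\mu\Bigr\}
\]
and the existence of an equilibrium measure for $\F\vert_{X_{l_n}}$. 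Thus the whole theorem reduces to establishing the approximation identities (\ref{aprox2})--(\ref{approsofic}) together with $P(\F)\neq-\infty$.

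First I would prove that approximation. The inequalities $P(\F\vert_{X_{l_n}})\leq P(\F\vert_Y)\leq P(\F)$ for $X_{l_n}\subseteq Y\subseteq X$ are immediate: $B_m(X_{l_n})\subseteq B_m(Y)\subseteq B_m(X)$ gives $Z_m(\F\vert_{X_{l_n}})\leq Z_m(\F\vert_Y)\leq Z_m(\F)$ for every $m$, and one passes to the limit in $m$. For the reverse, put $f'_m=e^Cf_m$, so that $\F'=\{\log f'_m\}$ is sub-additive, $Z_m(\F')$ is submultiplicative, $P(\F)=P(\F')$, and $Z_m(\F'\vert_{X_{l_n}})\nearrow Z_m(\F')$ as $n\to\infty$ by monotone convergence, since $\bigcup_n B_m(X_{l_n})=B_m(X)$ (any allowable word of $X$ has a preimage in $\bar X$ using finitely many symbols, hence lies in some $\pi(\bar X_{A_{l_n}})\subseteq X_{l_n}$) and $\sup\{f'_m(x):x\in[w]\cap X_{l_n}\}\nearrow\sup\{f'_m(x):x\in[w]\}$. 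Then, following the proof of Proposition \ref{approx1}, I fix $\epsilon>0$, choose $m$ and $k\geq q$ so that $\tfrac1m\log Z_m(\F'\vert_{X_{l_k}})$ is as close to $P(\F')$ as desired, and iterate (\ref{keyforsofic}) inside the compact irreducible shift $X_{l_k}$: for $u,v$ of length $m$ there is $w\in W$, $|w|\leq p$, with $uwv$ allowable in $X_{l_k}$ and $\sup_{[uwv]}f'_{2m+|w|}\vert_{X_{l_k}}\geq\bigl(D_{m,m}e^{-C}/M_{2m+p}\bigr)\sup_{[u]}f'_m\vert_{X_{l_k}}\sup_{[v]}f'_m\vert_{X_{l_k}}$; summing over pairs and iterating produces, along suitable lengths $N_r\to\infty$, a bound $Z_{N_r}(\F'\vert_{X_{l_k}})\geq c_r\bigl(\textstyle\prod_j D_{\cdot,\cdot}e^{-C}/M_\cdot\bigr)Z_m(\F'\vert_{X_{l_k}})^{r}$ with $c_r$ polynomial in $r$. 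Dividing by $N_r$ and letting $r\to\infty$, the point — which must be checked with care — is that \ref{a4} forces $\tfrac1{N_r}\log\bigl(\prod_jD_{\cdot,\cdot}\bigr)\to0$ and tempered variation forces $\tfrac1{N_r}\sum_j\log M_\cdot\to0$, so these factors disappear in the limit and $P(\F\vert_{X_{l_k}})\geq\tfrac1{m+p}\log Z_m(\F'\vert_{X_{l_k}})-\delta_m$ with $\delta_m\to0$. Taking $m,k$ large gives $\sup_{k\geq q}P(\F\vert_{X_{l_k}})\geq P(\F)$, and the same estimate shows $P(\F)\neq-\infty$. When $Z_1(\F)=\infty$ an analogous iteration (with $Z_1(\F\vert_{X_{l_k}})$ taken arbitrarily large, as in Lemma \ref{gdefinf}) gives $P(\F\vert_{X_{l_k}})\to\infty$, so $P(\F)=\infty$. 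Finally (\ref{approsofic}) follows by squeezing, since each $X_{l_n}$ is itself an irreducible finite sofic subshift of $X$.

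With the approximation in hand, the variational principle follows as in Theorem \ref{vp1}. The inequality $\leq$ in (\ref{vp:sofic}) (resp. (\ref{vp:soficinf})) is Proposition \ref{upper} (resp. Lemma \ref{upper1}), applicable because \ref{a0} and \ref{a4} hold. For $\geq$: given $\epsilon>0$, pick $k\geq q$ with $P(\F\vert_{X_{l_k}})>P(\F)-\epsilon$ (when $P(\F)=\infty$, with $P(\F\vert_{X_{l_k}})>1/\epsilon$), and let $m_k$ be an equilibrium measure for $\F\vert_{X_{l_k}}$ on the compact shift $X_{l_k}$. Then $m_k\in M(X,\sigma)$; since $X_{l_k}$ has finite alphabet, $h_{m_k}(\sigma)<\infty$, so from $P(\F\vert_{X_{l_k}})=h_{m_k}(\sigma)+\lim_m\tfrac1m\int\log f_m\,dm_k\neq-\infty$ we get $\lim_m\tfrac1m\int\log f_m\,dm_k>-\infty$; hence $m_k$ is admissible in the supremum on the right of (\ref{vp:sofic})--(\ref{vp:soficinf}) and contributes $P(\F\vert_{X_{l_k}})>P(\F)-\epsilon$. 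Letting $\epsilon\to0$ gives $\geq$, and the two bounds yield the stated equalities; as in Proposition \ref{gdefi} and Theorem \ref{vp1}, if $\F$ satisfies \ref{a4} with the strong specification the intermediate lengths can be taken constant and the $\limsup$ becomes a $\lim$.

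The step I expect to be the main obstacle is the approximation, and precisely the control of the accumulated multiplicative constants in the iteration of (\ref{keyforsofic}) when the specification constants $D_{n,m}$ are only sub‑exponential rather than uniform — the uniform case being exactly Remark \ref{casebowen}. One must organise the iteration (how the intermediate word lengths grow and how many times one iterates relative to $m$) so that $\tfrac1{N}\log\bigl(\prod_jD_{\cdot,\cdot}\bigr)\to0$ and $\tfrac1{N}\sum_j\log M_\cdot\to0$ along the constructed words. Everything else — the monotone exhaustion of $X$ by the $X_{l_n}$, the compact variational principle on each piece, Propositions \ref{cha} and \ref{upper}, and Lemma \ref{upper1} — is already available.
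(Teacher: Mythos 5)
Your proposal follows the paper's route essentially step for step: exhaust $X$ by the finite-alphabet irreducible sofic approximants $X_{l_n}$ of Lemma \ref{forsofic}, prove the approximation property \eqref{aprox2} by iterating \eqref{keyforsofic}, handle $Z_1(\F)=\infty$ as in Lemma \ref{gdefinf}, and then combine Theorem \ref{vpcompact} on the compact pieces with Proposition \ref{upper} and Lemma \ref{upper1} exactly as in Theorem \ref{vp1}. The one step you leave open --- how to organise the concatenation so that $\frac{1}{N}\log\prod_j D_{\cdot,\cdot}\to 0$ and the tempered-variation factors vanish --- is precisely where the paper makes a specific choice, and it is worth recording: one does \emph{not} let the block lengths grow. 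Instead one fixes a single length $m$ in advance with $\frac{1}{m}\log M_m<\epsilon$ and $\frac{1}{m+p}\bigl|\log(D_{m,m}/e^{C})\bigr|<\epsilon$, and concatenates $n$ blocks $u_1,\dots,u_n\in B_m(X_{l_{k_1}})$ with junction words from the finite set $W$ of \ref{a5}; every junction then contributes the \emph{same} factor $D_{m,m}/e^{C}$, so the accumulated constant is $(D_{m,m}/e^{C})^{n-1}$ with per-symbol cost at most $\epsilon$ uniformly in $n$, and only one tempered-variation factor $M_{nm+p(n-1)}$ is needed (to pass from sups over cylinders of $X$ to sups over cylinders of $X_{l_{k_1}}$), whose normalised logarithm tends to $0$ as $n\to\infty$. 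With that organisation the bound $P(\F\vert_{X_{l_{k_1}}})\geq \frac{1}{m}\log Z_m(\F'\vert_{X_{l_{k_1}}})-O(\epsilon)\geq P(\F)-O(\epsilon)$ follows, and the rest of your argument goes through as written.
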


\begin{rem}
Condition \ref{a5} implies that $(X, \sigma)$ is a finitely irreducible countable sofic shift.  If $\sup f_1<\infty$, then 
(\ref{vp:sofic}) also holds for the case when $P(\F)=\infty$. 
\end{rem}

\begin{proof}
We first consider the case when $Z_1(\F)<\infty$. Then $P(\F)<\infty$ by Proposition \ref{cha}.
Note that there exist an irreducible countable Markov shift $(\bar {X}, \sigma_{\bar{X}})$ and a one-block factor map 
$\pi:\bar X \rightarrow X$ such that  $\vert \pi^{-1}(i)\vert<\infty$ for each $i\in \N$.
We show first (\ref{aprox2}) using a  modification of the proof of \cite[Theorem 1.2]{mu2}. As in the proof of Proposition \ref{approx1}
let $f'(x)= e^Cf_n(x)$ and $\F'=\{\log f'_n\}_{n=1}^{\infty}$. 
Then $\F'$ is sub-additive and $P( \F)=P( \F')$. Let $M_n$ be defined for $\F$ as in Definition \ref{bowen}.

Let $\epsilon>0.$ Fix $m\in \N$ such that  $(1/m)\log M_m< \epsilon$,  $(1/(m+p))\vert \log ({D_{m,m}}/{e^C})\vert <\epsilon $ and $1-\epsilon< (m/(m+p))$. Note that 
$Z_m(\F')<\infty$.

We apply Lemma \ref{forsofic} and consider $X_{l_k}$ where $k\geq q$. Then  for each $n\in\N$, we have 
\begin{equation}
Z_n(\F'\vert _{X_{l_k}})=\sum_{w\in B_n(X_{l_k})}\sup\{f'_n\vert_{X_{l_k}}(x):x\in [w]\}.
\end{equation}
Since $w\in B_m(X_{l_k})$ implies that $w\in B_m(X)$,
let $S_{l_k}(\F'):=\sum_{w\in B_m(X_{l_k})}\sup\{f'_m(x):x\in [w]\}$.
Noting that for each $x_1\dots x_m\in B_m(X)$, there exists $i\in\N$  such that 
$x_1\dots x_m \in B_m(X_{l_i})$, 
\begin{equation}
Z_m(\F')=\lim_{i\rightarrow \infty}S_{l_i}(\F') ,
\end{equation}
where $\{S_{l_i}(\F')\}_{i=1}^{\infty} $is monotone increasing.
Hence, for every  $\epsilon>0$, there exists $k_1>  q$ such that
\begin{equation}
\frac{1}{m}\log Z_m(\F')-\frac{1}{m}\log S_{l_{k_1}}(\F')  < \epsilon.
\end{equation}
Since $\F$ has tempered variation,  we have that $M_mZ_m(\F'\vert _{X_{l_k}})\geq S_{l_k}(\F')$.
Since $\F'$ is sub-additive, we obtain 
\begin{equation}\label{first}
\frac{1}{m}\log  Z_m(\F'\vert _{X_{l_{k_1}}}) \geq 
\frac{1}{m}\log Z_m(\F')-\epsilon-\frac{\log M_m}{m} \geq P(\F') -2\epsilon.
\end{equation}

Now, for $0\leq i\leq n$, $n\in\N$, let $u_i\in B_m(X_{l_{k_1}})$.   Since $\F$ satisfies \ref{a4} and  \ref{a5}, letting  $W$ be a finite set from \ref{a5}, there exist
 $w_1, \dots, w_{n-1}$ in $W$ such that 
$u_1w_1\dots w_{n-1}u_n$ is an allowable word of length $nm+ \vert w_1 \vert +\dots +\vert w_{n-1}\vert$ of $X$, such that 
\begin{equation}\label{csofic}
\begin{split}
\sup\{f'_{nm+\vert w_1 \vert +\dots +\vert w_{n-1}\vert }(x): x\in  [u_1w_1\dots w_{n-1}u_n]\}
&\geq
\left(\frac{D_{m,m}}{e^C} \right)^{n-1}\prod _{i=1}^{n} \sup\{f'_{m}(x) :x \in [u_i]\}.
\end{split}
\end{equation}
By the construction of $X_{l_k}$, $k\geq q$, in the proof of Lemma \ref{forsofic}, we note that $u_1w_1\dots w_{n-1}u_n$ is an allowable word of $X_{l_{k_1}}$. Therefore, 

\begin{equation}
\begin{split}
&M_{nm+p(n-1)}\sup\{f'_{nm+\vert w_1 \vert +\dots +\vert w_{n-1}\vert }\vert_{X_{l_{k_1}}} (x): x\in  [u_1w_1\dots w_{n-1}u_n]\}
\label{control}\\
&\geq 
\sup\{f'_{nm+\vert w_1 \vert +\dots +\vert w_{n-1}\vert }(x): x\in  [u_1w_1\dots w_{n-1}u_n]\}\\
&\geq 
\left(\frac{D_{m,m}}{e^C}\right)^{n-1}\prod _{i=1}^{n} \sup\{f'_{m}(x) :x \in [u_i]\}
\geq 
 \left(\frac{D_{m,m}}{e^C}\right)^{n-1}  \prod _{i=1}^{n} \sup\{f'_{m}\vert_{X_{l_{k_1}}}(x) :x \in [u_i]\}.
\end{split}
\end{equation}
Summing over all allowable words  $u_i\in B_m(X_{l_{k_1}})$, $0\leq i\leq n$, we obtain 
\begin{equation*}
\begin{split}
\sum _{ 0\leq t\leq p(n-1)} Z_{nm+t} (\F'\vert _{X_{l_{k_1}}})
&\geq 
 \left(\frac{D_{m,m}}{e^C}\right)^{n-1}\cdot \frac{1}{M_{nm+p(n-1)}}(Z_{m} (\F'\vert _{X_{l_{k_1}}}))^n.
\end{split}
\end{equation*}
Hence, there exists $0\leq i_{n,m}\leq p(n-1)$ such that 
\begin{equation*}
\begin{split}
Z_{nm+i_{n,m}} (\F'\vert _{X_{l_{k_1}}})
&\geq \left(\frac{D_{m,m}}{e^C}\right)^{n-1}\cdot \frac{1}{M_{nm+p(n-1)}} \cdot \frac{1}{p(n-1)+1}\cdot 
\left(Z_{m} (\F'\vert _{X_{l_{k_1}}}) \right)^n.
\end{split}
\end{equation*}
Thus
\begin{equation*}
\begin{split}
&\frac{1}{nm+i_{n,m}}\log (Z_{nm+i_{n,m}} (\F'\vert _{X_{l_{k_1}}}))\\
&\geq \frac{1}{nm+p(n-1)} \log ((\frac{D_{m,m}}{e^C})^{n-1}\cdot \frac{1}{M_{nm+p(n-1)}}  \cdot \frac{1}{p(n-1)+1}) +  \frac{n}{nm+p(n-1)} \log Z_{m} (\F'\vert _{X_{l_{k_1}}}).
\end{split}
\end{equation*}
Letting $n\rightarrow \infty$ and using (\ref{first}) we have,
\begin{equation}
\begin{split}
\limsup_{n\rightarrow\infty} \frac{1}{nm+i_{n,m}}\log (Z_{nm+i_{n,m}} (\F'\vert _{X_{l_{k_1}}}))&\geq  \frac{1}{m+p} \log \frac{D_{m,m}}{e^C}+  \frac{m}{m+p} \cdot \frac{1}{m}\log Z_{m} (\F'\vert _{X_{l_{k_1}}}) \label{forcontrol}\\
&\geq -2\epsilon-\epsilon P(\F')+ 2\epsilon^2+P(\F').\\
\end{split}
\end{equation}
Therefore,  we obtain (\ref{aprox2}). 

Next assume $Z_1(\F)=\infty$. We first show that $P(\F)=\infty$. Given $L>0$, there exists $X_{l_{s}}, s\geq  q$ such that $Z_1(\F\vert_{X_{l_s}})>L$. Then $Z_1(\F'\vert_{X_{l_s}})>Le^C$. Let $Y:=X_{l_s}$.  Then 
for each $n\in\N$ there exists $0\leq i_{n,1}\leq p(n-1)$ such that 
\begin{equation}\label{keyforinfty}
\begin{split}
&\frac{1}{n+i_{n,1}}\log (Z_{n+i_{n,1}} (\F'\vert _Y))\\
&\geq \frac{1}{n+p(n-1)} \log ((\frac{D_{1,1}}{e^C})^{n-1}\cdot \frac{1}{M_{n+p(n-1)}}  \cdot \frac{1}{p(n-1)+1}) +  \frac{n}{n+p(n-1)} \log Z_{1} (\F'\vert _Y).
\end{split}
\end{equation}
A similar argument as in the proof of Lemma \ref{gdefinf} implies $P(\F)=\infty$.   The approximation property (\ref{aprox2}) is obvious from (\ref{keyforinfty}).
 
Since Propositions \ref{cha}, \ref{upper} and Lemma \ref{upper1} hold, the same proof (using the approximation property (\ref{aprox2})) as in the proof of Theorem \ref{vp1} yields the variational principle, equations \eqref{vp:sofic} and \eqref{vp:soficinf}. It is easy to see that (\ref{approsofic}) holds by Lemma \ref{forsofic} and its proof.
\end{proof}
In the following, we study a condition for which $P(\F)$=$P_{G}(\F)$, when $\G$ is defined on a countable sofic shift. 
\begin{prop}\label{defaasofic}
Let $(X, \sigma)$ be a finitely irreducible countable sofic shift. If $\mathcal{G}$ is an almost-additive sequence on $X$ with tempered variation,  then $P(\mathcal{G})=P_{G}(\mathcal{G}).$
In particular, if $X$ is a factor of a finitely primitive countable Markov shift and $P(\G)<\infty$, then $\limsup$ in (\ref{gurev}) can be replaced by $\lim$.
\end{prop}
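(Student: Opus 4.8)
The plan is to reduce to the compact sofic approximations already available and to combine them with the classical compact version of the identity. Since $(X,\sigma)$ is finitely irreducible and $\G$ is almost-additive with tempered variation, Lemma \ref{d} shows that $\G$ satisfies \ref{a0}, \ref{a4} and \ref{a5}, so Theorem \ref{thmsofic} applies; let $\{X_{l_n}\}_{n\geq q}$ be the increasing sequence of compact irreducible sofic shifts constructed in Lemma \ref{forsofic}, which exhaust $X$, so that $P(\G)=\sup_{n\geq q}P(\G|_{X_{l_n}})$. I record two elementary facts. If $x\in X$ satisfies $\sigma^n x=x$ then $x=(x_1\dots x_n)^\infty$, so $x\mapsto x_1\dots x_n$ injects the points $x$ with $\sigma^n x = x$ and $x_1=a$ into $B_n(X)$, and $g_n(x)\leq\sup\{g_n(y):y\in[x_1\dots x_n]\}$; hence $Z_n(\G,a)\leq Z_n(\G)$ for every $a$ and $n$, and therefore $\limsup_{n\to\infty}(1/n)\log Z_n(\G,a)\leq P(\G)$. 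On the other hand, each $X_{l_n}$ is contained in $X$ and the restriction $\G|_{X_{l_n}}$ carries the same weights, so $Z_n(\G|_{X_{l_n}},a)\leq Z_n(\G,a)$ for every $a$ and $n$.

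For the reverse inequality I would use the compact case: for a compact irreducible sofic shift $Y$ and an almost-additive sequence $\G|_Y$ with tempered variation one has $P(\G|_Y)=P_G(\G|_Y)$; the entropy case is \cite[Theorem 4.3.6]{LM}, and the general case follows in the same manner (or from the closing-up argument of the next paragraph, applied inside a finite irreducible, hence finitely irreducible, Markov presentation of $Y$). Now fix $a\in\N$. For all $n$ large enough that $a\in S_{l_n}$, the value $P_G(\G|_{X_{l_n}})$ may be computed using the symbol $a$, so by the second fact above $P(\G|_{X_{l_n}})=P_G(\G|_{X_{l_n}})\leq\limsup_{n\to\infty}(1/n)\log Z_n(\G,a)$; since $\{X_{l_n}\}$ is increasing this yields $P(\G)=\sup_{n\geq q}P(\G|_{X_{l_n}})\leq\limsup_{n\to\infty}(1/n)\log Z_n(\G,a)$. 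Combined with the first fact, $\limsup_{n\to\infty}(1/n)\log Z_n(\G,a)=P(\G)$ for every $a\in\N$ (both sides being $\infty$ when $P(\G)=\infty$); in particular this $\limsup$ does not depend on $a$, so $P_G(\G)$ is defined and equals $P(\G)$.

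For the last assertion, write $X=\pi(\bar X)$ with $\bar X$ finitely primitive, say with a connecting set $W_1\subset B_p(\bar X)$, and assume $P(\G)<\infty$, so by Proposition \ref{cha} one has $Z_n(\G)<\infty$ for every $n$ and $(1/n)\log Z_n(\G)\to P(\G)$. The task is to upgrade the $\limsup$ in (\ref{gurev}) to a $\lim$. Given $x,y\in X$ with $\sigma^n x=x$, $\sigma^m y=y$ and $x_1=y_1=a$, lift $x_1\dots x_n$ and $y_1\dots y_m$ to words $\bar u,\bar v$ of $\bar X$ and choose $\bar w_1,\bar w_2\in W_1$, each of length $p$, so that $\bar u\bar w_1\bar v\bar w_2\bar u$ is allowable in $\bar X$; then $(\bar u\bar w_1\bar v\bar w_2)^\infty\in\bar X$ and its image under $\pi$ is a point $z^*\in X$ with $\sigma^{n+m+2p}z^*=z^*$ and $z^*_1=a$. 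Because the inserted words have the fixed length $p$, the pair $(x,y)$ is recovered from $z^*$, so this map is injective; bounding $g_{n+m+2p}(z^*)$ from below by almost-additivity and tempered variation gives
\begin{equation*}
Z_{n+m+2p}(\G,a)\ \geq\ \frac{c}{M_nM_m}\,Z_n(\G,a)\,Z_m(\G,a)
\end{equation*}
for a constant $c=c(a,\bar X)>0$, with $M_k$ as in Definition \ref{bowen}. Since $(1/k)\log M_k\to0$, a standard Fekete-type argument for almost-supermultiplicative sequences shows that $(1/n)\log Z_n(\G,a)$ converges; its limit is $\leq P(\G)$ by the first fact above and $\geq P(\G)$ by the previous paragraph, hence equals $P(\G)$, and $\limsup$ in (\ref{gurev}) may be replaced by $\lim$.

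The main obstacle, and the reason for routing the argument through the shifts $X_{l_n}$, is that a countable Markov shift covering a finitely irreducible countable sofic shift need not itself be finitely irreducible, so one cannot close up allowable words into periodic orbits directly on $X$ or on its given cover; the finite sofic sub-shifts of Lemma \ref{forsofic}, each of which does admit a finitely irreducible Markov presentation, are what make the periodic-orbit estimates possible, and the remaining difficulty is the bookkeeping needed to transfer these finite estimates to the countable setting. For the finitely primitive case the additional ingredient is the strong specification, which furnishes the near-supermultiplicativity used to pass from $\limsup$ to $\lim$.
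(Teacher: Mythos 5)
Your first part takes a genuinely different (and legitimate) route from the paper's. The paper never passes through the compact pieces for the main identity: it performs the closing-up directly on $X$, using the finite irreducibility of $X$ itself to form $aw_1x_1\dots x_nw_2a$, lifting this word to the Markov cover $\bar X$, and then closing the lifted word into a periodic orbit by inserting one of the finitely many \emph{minimal-length} connecting words $\bar w_{i,j}$ between the finitely many preimages $a_1,\dots,a_t$ of $a$; this yields $\sum_{0\le i\le 2p+K}Z_{n+i+2}(\G,a)\gtrsim Z_n(\G)/(M_n\cdots)$ and hence $\limsup_n \frac1n\log Z_n(\G,a)\ge P(\G)$ exactly as in Proposition \ref{gdefi}. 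Your sandwich through $P(\G)=\sup_n P(\G|_{X_{l_n}})$ and the compact-case identity $P=P_G$ on each $X_{l_n}$ is sound in outline, but note that it does not avoid the hard step: \cite[Theorem 4.3.6]{LM} covers only entropy, so the compact sofic identity for almost-additive $\G$ still requires precisely the same lift-and-close argument (finitely many preimages of $a$ in an irreducible finite Markov presentation, connecting words between them), which you only gesture at. You should either prove that compact lemma or, more economically, run the paper's closing-up once on $X$ itself.

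The second part has a genuine gap. From the relation $Z_{n+m+2p}(\G,a)\ge \frac{c}{M_nM_m}Z_n(\G,a)Z_m(\G,a)$ you invoke ``a standard Fekete-type argument for almost-supermultiplicative sequences.'' Under the stated hypothesis $\G$ has only tempered variation, so $\log M_n=o(n)$ but need not be bounded; iterating your inequality $k$ times accumulates an error $\sum_{j\le k}\log M_{jm+O(p j)}$, which after dividing by the total length need not tend to $0$ (it can be of order $k\cdot\sup_{r\ge m}\frac{\log M_r}{r}$, which diverges in $k$ for fixed $m$). There is no standard superadditive Fekete lemma with $o(n)$ additive errors of this two-variable form, and your relation also does not by itself guarantee $Z_n(\G,a)>0$ for \emph{all} large $n$ (only for $n$ in an additively generated set), so the $\liminf$ could a priori be $-\infty$ along gaps. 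The paper sidesteps both problems: with fixed connector lengths it compares $Z_{n+\text{const}}(\G,a)$ to the \emph{full} partition sum $Z_n(\G)$, i.e.\ $Z_{n+3p+2}(\G,a)\ge d_n Z_n(\G)$ with $\frac1n\log d_n\to0$; since $\frac1n\log Z_n(\G)\to P(\G)$ by ordinary subadditive Fekete, this gives $\liminf_m\frac1m\log Z_m(\G,a)\ge P(\G)\ge\limsup_m\frac1m\log Z_m(\G,a)$ in a single step, with no iteration and with positivity for every large $m$ for free. You should replace your pairwise supermultiplicativity by this one-sided comparison against $Z_n(\G)$.
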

\begin{proof}
First assume $Z_1(\G)<\infty$. Thus $P(\G)<\infty$.
Since $X$ is a finitely irreducible countable sofic shift, let $\bar{X}$ and  $\pi: \bar{X}\rightarrow 
X$ be as in the proof of Lemma \ref{forsofic}. 
Let $p\in\N$ and a finite set $W_1$ be defined for $X$ as in Definition \ref{fi}. We consider the case when $W_1\neq \{\varepsilon\}$. Let $x_1\dots x_n\in B_n(X)$ and $a\in \N$ be a symbol in $X$.
Then there exist allowable words $w_1, w_2$ in $W_1$ of length $0\leq k_1, k_2\leq p$ respectively such that $aw_1x_1\dots x_nw_2a \in B_{n+2+k_1+k_2}(X)$. Therefore, there exist 
$\bar{x}_1\dots \bar{x}_n\in \pi^{-1}(x_1\dots x_n)$,
$a_1, a_2\in \pi^{-1}(a)$, $\bar{w}_1 \in \pi^{-1}(w_1)$ and $\bar{w}_2\in \pi^{-1}(w_2)$ such that 
$a_1\bar{w}_1\bar{x}_1\dots \bar{x}_n\bar{w}_2a_2\in  B_{n+k_1+k_2+2}(\bar{X})$ and $\pi(a_1\bar{w}_1\bar{x}_1\dots \bar{x}_n\bar{w}_2a_2)=aw_1x_1\dots x_nw_2a$. 
Since $\vert \pi^{-1}(a)\vert<\infty$, we have $\pi^{-1}(a)=\{a_1,\dots, a_t\} $ for some $t\in \N$. 
For each pair $a_i, a_j, 1\leq i,j\leq t$, define $k_{i,j}=\min\{\vert w\vert: a_iwa_j\in B_{2+\vert w\vert}(\bar{ X}), \vert w \vert \geq 1\}$. Then for each $i,j$, there exist a word at which the minimum is attained and we call it  $\bar{w}_{i,j}\in B_{k_{i,j}}(\bar X)$. Let $\pi(\bar{w}_{i,j})=w_{i,j}$.

Now let $\bar{x}=(a_1\bar{w}_1\bar{x}_1\dots \bar{x}_n\bar{w_2}a_2\bar{w}_{2,1})^{\infty}\in \bar{X}$ and $x=\pi(\bar{x})$. Then $x$ has a period $(n+2+k_1+k_2+k_{2,1})$ in $X$. 
We first consider the case when $k_1, k_2$ are both nonzero. Since $\G$ is almost-additive and has tempered variation, letting $N_a=\sup \{f_{1}(x): x \in [a]\}$, we obtain 
\begin{equation}\label{aabelow}
\begin{split}
&g_{n+k_1+ k_2+k_{2,1}+2}(x)\\
&\geq \frac{e^{-5C}}{M_n (M_p)^2 (M_1)^2 M_k}\sup \{g_{n}(x): x\in [x_1\dots x_n]\}
(N_a)^2 \sup \{g_{k_1}(x): x\in [w_1]\} \\
&\cdot\sup \{g_{k_2}(x): x\in [w_2]\}
\sup \{g_{k_{2,1}}(x): x\in [w_{2,1}]\}.
\end{split}
\end{equation}

Since $g$ has tempered variation, for each $1\leq i,j\leq t$, there exists constant $C_{w_{i,j}}>0$ such that $ \sup \{g_{k_{i,j}}(x): x\in [w_{i,j}]\}>C_{w_{i,j}}$.
Since we have finitely many $i,j$, let $B=\min_{i,j}C_{w_{i,j}}$. 
and $K=\max_{i,j}k_{i,j}$ 

Now we consider the case when at least one of $k_1, k_2$ is $0$. 
Observe that if $k_1$ is $0$, then we replace $\sup \{g_{k_1}(x): x\in [w_1]\}$ in (\ref{aabelow})
 by $1$. This applies also to $k_2$.
Clearly  there exists $\bar{D}>0$ such that $\min_{w \in W_1, \vert w\vert\geq 1}\sup \{g_{l}(x): x\in [w]\}>\bar{D}$. Let$ \bar {D}'=\min\{1, \bar D\}$. Then, 
(\ref{aabelow}) implies that 
\begin{equation}\label{fis}
\begin{split}
\sum_{0\leq i \leq 2p+K}Z_{n+i+2}(\mathcal{G}, a)
&\geq \frac{e^{-5C}}{M_n(M_p)^2 (M_1)^2 M_K}Z_n(\G)(N_a)^2 B\bar{D'}^2.
\end{split}
\end{equation}
Thus similar arguments as in the proof of Proposition \ref{gdefi} yield
\begin{equation*}
\limsup_{n\rightarrow\infty}\frac{1}{n}\log Z_{n}(\mathcal{G}, a)
\geq \limsup_{n\rightarrow\infty}\frac{1}{n}\log Z_{n}(\mathcal{G}).
\end{equation*}
Since $a$ is arbitrary, we obtain the result.  

Next assume that $Z_1(\G)=\infty$. Then $P(\G)=\infty$. Let $\G'=C+\G$. Given $L>0$,  there exists $X_{l_{s}}, s\geq q$ such that $Z_1(\G\vert_{X_{l_s}})>L$. Let $Y:=X_{l_s}$.
Then (\ref{keyforinfty}) holds if we replace $\F'$ by $\G'$.
Since $P(\mathcal{G'}\vert_Y)=P_{G}(\mathcal{G'}\vert_Y)$,  similar arguments as in the proof of Lemma \ref{gdefinf} imply $P_G(\G)=\infty$
To show the second statement, we use the  similar arguments as in the proof of Proposition \ref{gdefi}. 
If $\bar X$ is a finitely primitive countable Markov shift, let $p$ be a strong specification number for $\bar X$ and set $k_1=k_2=K=p$. 
 
\end{proof}

Note that Theorem \ref{thmsofic} generalizes the thermodynamic formalism on non-compact shifts, including now irreducible countable sofic shifts. Indeed, 

\begin{coro}\label{special1}
Let  $(X, \sigma)$ be a finitely irreducible countable sofic shift. If $\F$ is an almost-additive sequence on $X$ with tempered variation,
then Theorem \ref{thmsofic}
holds for $\F$ and 
$P(\F)=P_{G}(\mathcal{F})$. In particular, Theorem \ref{thmsofic} holds for a continuous function $f$ on $X$  with tempered variation 
by setting $f_n(x)=e^{(S_nf)(x)}$ for all $x\in X$.
\end{coro}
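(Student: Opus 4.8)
The plan is to reduce the statement to results already established, chiefly Theorem \ref{thmsofic}, Lemma \ref{d}, and Proposition \ref{defaasofic}. First I would observe that if $\F$ is an almost-additive sequence on $X$ with tempered variation, then by Lemma \ref{d} (applied with the roles of $\mathcal{G}$ and $\F$ identified, since $(X,\sigma)$ is finitely irreducible as a subshift on a countable alphabet) the sequence $\F$ satisfies \ref{a0}, \ref{a4} and \ref{a5}. Hence the hypotheses of Theorem \ref{thmsofic} are met, and all of its conclusions hold for $\F$: the approximation formulas \eqref{aprox2} and \eqref{approsofic}, the fact that $P(\F)\neq-\infty$, and the variational principle in the form \eqref{vp:sofic} when $P(\F)<\infty$ and \eqref{vp:soficinf} when $P(\F)=\infty$. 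This is essentially immediate once Lemma \ref{d} is invoked, so the first step is just bookkeeping.

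Next I would invoke Proposition \ref{defaasofic}, whose hypotheses are exactly ``$(X,\sigma)$ a finitely irreducible countable sofic shift and $\mathcal{G}$ an almost-additive sequence on $X$ with tempered variation''. Applying it to $\mathcal{G}=\F$ yields $P(\F)=P_{G}(\F)$, which is the second assertion of the corollary. If in addition one wants the $\lim$ in \eqref{gurev} rather than a $\limsup$, one uses the second part of Proposition \ref{defaasofic}, under the extra assumption that $X$ is a factor of a finitely primitive countable Markov shift; I would state this parenthetically rather than belabor it, since the corollary as phrased only claims the equality of pressures.

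For the last sentence, let $f$ be a continuous function on $X$ with tempered variation in the sense of Example \ref{basic1}, and set $f_n(x)=e^{(S_nf)(x)}$, $\F=\{\log f_n\}_{n=1}^{\infty}$. Then $\F$ is additive, hence in particular almost-additive with constant $C=0$, and the tempered-variation hypothesis on $f$ (finiteness of $A_n$ and $(1/n)A_n\to 0$) translates directly into the tempered-variation property of $\F$ in the sense of Definition \ref{bowen}, since $M_n=\sup\{f_n(x)/f_n(y): x_i=y_i, 1\le i\le n\}\le e^{A_n}$. Therefore $\F$ is an almost-additive sequence with tempered variation on the finitely irreducible countable sofic shift $X$, and the already-proved part of the corollary applies verbatim. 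I do not expect any genuine obstacle here: the whole corollary is a packaging of Lemma \ref{d}, Theorem \ref{thmsofic} and Proposition \ref{defaasofic}, and the only point requiring a line of justification is the passage from ``$f$ has tempered variation'' to ``$\F=\{\log e^{S_nf}\}$ has tempered variation'', which is the $M_n\le e^{A_n}$ estimate above (and the observation that additivity gives \ref{a0} with $C=0$).
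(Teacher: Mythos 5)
Your proposal is correct and follows essentially the same route as the paper: the paper's proof is precisely ``By Lemma \ref{d}, $\F$ satisfies \ref{a0}, \ref{a4} and \ref{a5}'' (so Theorem \ref{thmsofic} applies), with $P(\F)=P_G(\F)$ coming from the immediately preceding Proposition \ref{defaasofic}, and the single-function case handled via Example \ref{basic1} exactly as you do with the $M_n\le e^{A_n}$ observation.
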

\begin{proof}
By Lemma \ref{d}, $\F$ satisfies \ref{a0}, \ref{a4} and\ref{a5}. 
For the last statement, we also apply Example \ref{basic1}. 
\end{proof}
\begin{rem}\label{add}
The variational principle is proved in \cite[Theorem 1.5] {mu2} for acceptable functions (uniformly continuous functions with an additional property) on finitely irreducible countable Markov shifts. Applying \cite[Proposition 6.2]{ffy}, it is easy to see that acceptable functions belong to the class of continuous functions with tempered variation.
In \cite[Theorem 2.4]{ffy}, the variational principle is studied for continuous functions with tempered variation on irreducible countable Markov shifts, without the finiteness condition on each $M_n$.
We also note that Corollary \ref{special1} generalizes the variational principle \cite[Theorem 3.1]{iy1} to that for almost-additive sequences with tempered variation on finitely irreducible countable sofic shifts. 
\end{rem}



Next we consider examples of Theorem \ref {thmsofic}.

\begin{eje}\label{newexample}
Let $\G$ be defined as in Theorem \ref{ftemperedv}. Then $\G$ is a Bowen sequence defined on a finitely irreducible countable sofic shift satisfying \ref{a0},  \ref{a4} and \ref{a5}.  Note that $\G$ does not satisfy \ref{a1}. Theorem  \ref {thmsofic} is applied in Theorem \ref{ftemperedv}. See Section \ref{hiddeng} for more details.
\end{eje}

\begin{eje}\label{finite2}
In Example  \ref{e11}, the sequence 
$\Psi =\{\log \psi_n\}_{n=1}^{\infty}$ defined on an irreducible countable sofic shift $Y$ satisfies \ref{a0}, \ref {a1} and \ref {a3}. Hence 
Theorem \ref {thmsofic}  holds. Since $Z_1(\Psi)\leq C_2\sum_{i\in\N}(1/{i^{2}})<\infty$ , we obtain $P(\Psi)<\infty$ and equation (\ref{vp:sofic}) holds.
\end{eje}

\begin{eje}\label{finite3}
In Example  \ref{exgeneral}, 
define for $i\in\N$
\begin{equation*}
 L_i:=\frac{\vert \pi^{-1}(i+1)\vert }{\vert \pi^{-1}(i)\vert K}.
 \end{equation*} 
Choose $K>0$  and define a factor map $\pi$ such that $\lim_{i\rightarrow\infty} L_i $ exists and $L:=\lim_{i\rightarrow\infty} L_i<1$.
Then the sequence $\Psi =\{\log \psi_n\}_{n=1}^{\infty}$
defined on a finitely irreducible countable sofic shift $Y$ satisfies \ref{a0}, \ref {a1} and \ref {a3}. Hence 
Theorem \ref {thmsofic}  holds. Since $Z_1(\Psi)<\infty$ by using the ratio test, we obtain $P(\Psi)<\infty$ and  equation (\ref{vp:sofic}) holds.
If there exists $l\in \N$ such that $\vert \pi^{-1}(i)\vert \leq l$ for all $i\in \N$ and $K>1$, then the same results hold.
If we define a constant $K>0$ and a factor map $\pi$ so that $L>1$, then $P(\Psi)=\infty$ and  equation (\ref{vp:soficinf}) holds.
\end{eje}



\section{Invariant Gibbs measures and uniqueness of Gibbs equilibrium measures}\label{sectiongibbs}

The variational principle provides a criteria to choose relevant invariant measures for the (very large) set  $M(X, \sigma)$ of invariant Borel probability measures. Indeed, measures that maximize the supremum have interesting ergodic properties. 
Major difficulties to prove the existence of these measures are the fact that the space $M(X, \sigma)$ is not compact (when endowed with the weak* topology) and that the entropy map $\mu \mapsto h_{\mu}(\sigma)$ is not necessarily upper-semi continuous. Despite this we prove that under certain assumptions on the system and the class of sequence of functions such measures do exist. Moreover, they satisfy the so called \emph{Gibbs} property which relates the measure of a cylinder of length $n$ with the function $f_n$. This property turns out to be very useful in a wide range of applications, for example in dimension theory of dynamical systems. The goal of this section is to prove under some conditions the existence and uniqueness of ergodic Gibbs measures for the Bowen sequences on finitely irreducible countable sofic shifts and the uniqueness of 
equilibrium states. The results are presented in Section \ref{statements} and the proofs of some technical lemmas are to be  found in Section \ref{lemmas}.

\subsection{Invariant Gibbs measures and uniqueness of Gibbs equilibrium measures} \label{statements}

Throughout this section, we assume that $\F=\{\log f_n\}_{n=1}^{\infty}$ is  a sequence defined on a finitely irreducible countable sofic shift $(X, \sigma)$ satisfying \ref{a0}, \ref{a1}, \ref{a3} and \ref{a2}.

\begin{defi}\label{dgibbs}
Let  $(X ,\sigma)$ be a subshift on a countable alphabet and 
$\F=\{\log f_n\}_{n=1}^{\infty}$ a sequence on $X$ satisfying \ref{a0}, \ref{a1}, \ref{a3} and \ref{a2}. A measure $\mu \in M(X, \sigma)$ is said to be an \emph{equilibrium measure} for $\F$ if
\begin{equation*}
P(\F)= h_{\mu}(\sigma) + \lim_{n \to \infty} \frac{1}{n} \int \log f_n \ d \mu.
\end{equation*}
\end{defi}

\begin{defi} \label{def-gibbs}
Let  $(X, \sigma)$ be a subshift on a countable alphabet and  
$\F=\{\log f_n\}_{n=1}^{\infty}$ a sequence on $X$ satisfying \ref{a0}, \ref{a1}, \ref{a3} and \ref{a2}. A measure $\mu \in M(X, \sigma)$ is said to 
be \emph{Gibbs} for $\F$ if there exist constants $C_{0}>0$ and $P \in \R$ such that for every $n \in \N$ and every $x \in [i_1 \dots i_{n}]$ we have
\begin{equation*}
\frac{1}{C_{0}} \leq	\frac{\mu([i_1 \dots i_{n}])}{\exp(-nP)f_n(x)}	\leq C_{0}.
\end{equation*}
 \end{defi}
A Gibbs measure $\mu$ for a continuous function $\phi$ could satisfy 
$h_{\mu}(\sigma)= \infty$ and $\int \phi \ d \mu= - \infty$.  In such a situation, the measure $\mu$ is not an equilibrium measure for $\phi$ (see \cite{s3} for comments and examples). 

Existence of Gibbs measure was studied in \cite{iy1,iy2} for an almost-additive sequence on a topologically mixing countable Markov shift with BIP property and in  \cite[Theorem 3.7]{KR} for a class of  sub-additive Bowen sequences on the full shift on a countable alphabet satisfying
\ref{a1}, \ref{a3} and \ref{a2}. Here we will generalize these results by considering a finitely irreducible countable sofic shift. The main result of this section is the following. 

%

\begin{teo}\label{main2}
Let $(X, \sigma)$ be a finitely irreducible countable sofic shift. If $\F=\{\log f_n\}_{n=1}^{\infty}$ is a Bowen 
sequence on $X$ satisfying \ref{a0}, \ref{a1}, \ref{a3} and \ref{a2}, then there is a unique invariant ergodic Gibbs measure $\mu$ for $\F$. Moreover, if in addition 
\begin{equation*}
\sum_{i\in \N}\sup \{\log f_1(x):x\in [i]\} \sup \{ f_1(x):x\in [i]\}>-\infty,
\end{equation*}
 then $\mu$  is the unique equilibrium measure for $\F$ on $X$. 
\end{teo}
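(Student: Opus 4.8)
The plan is to follow the classical Ruelle–Perron–Frobenius strategy adapted to the sofic, non-compact setting, building on the approximation results of Section \ref{vpvp}. First I would construct a candidate Gibbs measure. Since $(X,\sigma)$ is a finitely irreducible countable sofic shift, write $X=\pi(\bar X)$ with $\bar X$ an irreducible countable Markov shift and $\pi$ a one-block factor map with finite fibers. Using Lemma \ref{forsofic} and its proof, take the increasing sequence of irreducible finite-alphabet sofic subshifts $X_{l_k}\subset X$ ($k\geq q$) exhausting $X$, on each of which $\F|_{X_{l_k}}$ satisfies \ref{a1} (with constant $D/M$, by Remark \ref{casebowen}). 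On each compact $X_{l_k}$ the theory of Cao–Feng–Huang (Theorem \ref{vpcompact}), together with the Bowen property and \ref{a1}, yields a unique equilibrium/Gibbs measure $\mu_k$ satisfying a uniform Gibbs bound: there is $C_0>0$, independent of $k$, and $P_k:=P(\F|_{X_{l_k}})$ with $C_0^{-1}\le \mu_k([w])/(e^{-|w|P_k}f_{|w|}(x))\le C_0$ for all cylinders $[w]$ in $X_{l_k}$. The uniformity of $C_0$ comes from the uniform constants $D$, $M$ in \ref{a1} and the Bowen property, exactly as in the finite-alphabet case. By Theorem \ref{thmsofic}, $P_k\uparrow P(\F)<\infty$ (finiteness of $P(\F)$ follows from \ref{a2} and Proposition \ref{cha}).

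Next I would pass to a limit. Condition \ref{a2}, i.e. $Z_1(\F)<\infty$, provides tightness: the uniform Gibbs bound gives $\mu_k([i])\le C_0 e^{-P_k}\sup\{f_1(x):x\in[i]\}\le C_0' \sup\{f_1(x):x\in[i]\}$ with $\sum_i \sup\{f_1(x):x\in[i]\}<\infty$, so $\{\mu_k\}$ is tight and has a weak* limit point $\mu$, which is $\sigma$-invariant and supported on $X$. Taking limits in the Gibbs inequalities (the cylinder $[w]$ is fixed, eventually contained in every $X_{l_k}$, and $P_k\to P(\F)$) shows $\mu$ is a Gibbs measure for $\F$ with constant $C_0$ and $P=P(\F)$. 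Ergodicity follows from finite irreducibility together with the Gibbs property by a standard argument: the Gibbs inequalities give a quasi-Bernoulli / bounded-distortion estimate $\mu([u]\cap\sigma^{-(n+k)}[v])\geq c\,\mu([u])\mu([v])$ for a word $w\in W$ connecting $u$ to $v$ (using \ref{a1}, \ref{a3} and the Bowen property), which upgrades to a mixing-type lower bound and hence ergodicity of $\mu$.

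For uniqueness of the invariant ergodic Gibbs measure: if $\nu$ is another invariant ergodic Gibbs measure for $\F$, then $\nu$ and $\mu$ are both Gibbs with the same $P$ (the Gibbs exponent is forced to equal $P(\F)$ by summing the Gibbs bounds over $B_n(X)$ and using Definition \ref{defpressure}), so $\mu$ and $\nu$ are mutually absolutely continuous with Radon–Nikodym derivative bounded above and below; two mutually absolutely continuous ergodic measures coincide. Finally, for the equilibrium-state statement under the extra hypothesis $\sum_i \sup\{\log f_1(x):x\in[i]\}\sup\{f_1(x):x\in[i]\}>-\infty$: this integrability condition ensures $\int \log f_1\,d\mu>-\infty$ (via the Gibbs upper bound on $\mu([i])$), hence by sub-additivity $\lim_n (1/n)\int\log f_n\,d\mu>-\infty$, so the variational expression makes sense for $\mu$; combining the Gibbs bounds with Definition \ref{defpressure} and the Shannon–McMillan–Breiman theorem gives $h_\mu(\sigma)+\lim_n(1/n)\int\log f_n\,d\mu=P(\F)$, so $\mu$ is an equilibrium measure, and the variational principle (Theorem \ref{thmsofic}) together with a convexity/ergodic-decomposition argument shows any equilibrium measure must be Gibbs and hence equal to $\mu$.

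The main obstacle I anticipate is establishing the Gibbs bound for the limit measure $\mu$ with a constant that is genuinely uniform in $k$, and in particular controlling the denominators $e^{-nP_k}f_n(x)$ as $P_k$ varies — this requires carefully tracking that all constants ($D$ from \ref{a1}, $M$ from the Bowen property, $|W|$ from \ref{a3}, $Z_1(\F)$ from \ref{a2}) are inherited by the finite subsystems $X_{l_k}$ independently of $k$, which is precisely the content secured by Lemma \ref{forsofic}. A secondary subtlety is verifying that no mass escapes to infinity in the weak* limit, which is exactly where hypothesis \ref{a2} is essential.
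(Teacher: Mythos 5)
Your plan coincides with the paper's own proof: approximation by the compact irreducible sofic subshifts $X_{l_k}$ of Lemma \ref{forsofic}, a Gibbs measure on each with a constant made uniform in $k$ via the constants $D$, $M$, $|W|$ (the paper's Proposition \ref{claim}, which redoes the construction by hand rather than quoting Theorem \ref{vpcompact}, precisely to extract that uniformity), tightness from \ref{a2}, passage to the weak* limit, ergodicity via the quasi-Bernoulli estimate from \ref{a1} and \ref{a3}, and the entropy-finiteness/absolute-continuity argument for the equilibrium statement. The only point to tighten is that tightness requires controlling the $\mu_k$-measure of $\{x: x_j> n_j\}$ for every coordinate $j$ (using \ref{a0} and the Gibbs bound), not just the first, but you flag this subtlety yourself and it is handled exactly as in the paper.
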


\begin {rem}
By  Proposition \ref{cha1} \ref{a2} is equivalent to  $P(\F)<\infty$.  
\end{rem}


\begin{coro}
Let $(X, \sigma)$ be a finitely irreducible countable Markov shift and
$\mathcal{G}=\{\log g_n\}_{n=1}^{\infty}$ an almost-additive Bowen 
sequence on $X$. 
If $\mathcal{G}$ satisfies \ref{a2},  then there is a unique Gibbs measure $\mu$ for $\G$ 
and it is ergodic. Moreover, if in addition 
\begin{equation*}
\sum_{i\in \N}\sup \{\log g_1(x):x\in [i]\} \sup \{ g_1(x):x\in [i]\}>-\infty,
\end{equation*}
 then $\mu$ is the unique equilibrium measure for $\mathcal{G}$. 
\end{coro}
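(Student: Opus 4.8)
The plan is to obtain this corollary as a direct specialization of Theorem \ref{main2}, so the work consists entirely of verifying that its four structural hypotheses hold for the pair $(X, \G)$ at hand.

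First I would observe that a countable Markov shift is a particular instance of a countable sofic shift in the sense of Definition \ref{def:sofic}. Indeed, taking $\pi$ to be the identity one-block factor map $\pi : X \to X$, one has $\vert \pi^{-1}(i)\vert = 1 < \infty$ for every $i \in \N$, so $X$ is the image of an irreducible countable Markov shift under a one-block factor map with the required finiteness condition. Since $X$ is assumed finitely irreducible, it is therefore a finitely irreducible countable sofic shift, and Theorem \ref{main2} is applicable to it.

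Next I would check that the sequence $\G$ satisfies conditions \ref{a0}, \ref{a1}, \ref{a3} and \ref{a2} demanded by Theorem \ref{main2}. A Bowen sequence has tempered variation, because $M_n \leq M$ forces $(1/n)\log M_n \to 0$; hence the hypotheses of Lemma \ref{d} are met, and its second assertion yields \ref{a0}, \ref{a1} and \ref{a3} for the almost-additive Bowen sequence $\G$ on the finitely irreducible shift $X$. Condition \ref{a2}, namely $Z_1(\G) = \sum_{i \in \N} \sup\{ g_1(x) : x \in [i]\} < \infty$, is assumed in the statement. With all four conditions in hand, Theorem \ref{main2} produces a unique invariant ergodic Gibbs measure $\mu$ for $\G$; to recover the exact wording of the corollary I would note that the Gibbs property together with finite irreducibility forces any invariant Gibbs measure to be ergodic, so uniqueness among ergodic Gibbs measures is in fact uniqueness among all Gibbs measures.

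For the equilibrium statement, the additional summability hypothesis $\sum_{i \in \N} \sup\{\log g_1(x) : x \in [i]\}\,\sup\{g_1(x):x\in[i]\} > -\infty$ is exactly the extra condition appearing in the second part of Theorem \ref{main2} with $f_n$ replaced by $g_n$; invoking that part verbatim gives that $\mu$ is the unique equilibrium measure for $\G$. I expect no genuine obstacle here: the two points needing care are simply the identification of a Markov shift as a sofic shift, so that Theorem \ref{main2} is available, and the appeal to Lemma \ref{d} that converts the almost-additive Bowen hypothesis into the combinatorial conditions \ref{a1} and \ref{a3}.
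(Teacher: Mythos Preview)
Your proposal is correct and follows exactly the paper's approach: invoke Lemma \ref{d} to obtain \ref{a1} and \ref{a3} from almost-additivity and the Bowen property, then apply Theorem \ref{main2}. The paper's proof is simply the one-line version of what you wrote; your additional remarks (identifying a Markov shift as a sofic shift via the identity code, and reconciling ``unique Gibbs'' with ``unique invariant ergodic Gibbs'') are reasonable clarifications but not needed beyond what the paper already records.
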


\begin{proof}
Lemma \ref{d} implies that $\mathcal{G}$ satisfies  \ref{a1} and \ref{a3}. Now apply Theorem \ref{main2}.
\end{proof}

\begin{rem}
Theorem \ref{main2} generalizes \cite[Theorem 4.1] {iy1} in which almost-additive Bowen sequences on finitely primitive countable Markov shifts are considered. If $\mathcal{G}=\{\log g_n\}_{n=1}^{\infty}$ is an almost-additive Bowen sequence, then 
$\sum_{i\in \N}\sup \{\log g_1(x):x\in [i]\} \sup \{ g_1(x):x\in [i]\}>-\infty$ is equivalent to $h_{\mu}(\sigma)<\infty$ where $\mu$ is the Gibbs measure (see \cite[Proposition 3.1]{iy2}).
\end{rem}

In Theorem \ref{main2}, we study the case when $W\neq \{\varepsilon\}$ (see Remark \ref{simplification}). 
Hence, throughout  the rest of the section, without loss of generality we assume


 \begin{enumerate}[label=(A\arabic*)]
\item \label{u} $\F=\{\log f_n\}_{n=1}^{\infty}$ satisfies \ref{a0}, \ref{a1} with some $p\in \N$ and \ref{a3} with a finite set $W$ containing a nonempty word $w^*$ of length $p$, \label{u}
\end{enumerate}

and 
 \begin{enumerate} [label=(A\arabic*)]
  \setcounter{enumi}{1}
\item  In Lemma \ref{forsofic}, for all $k\geq q$,
$w^*\in W$ appears in (\ref{keyforsofic}) for a pair of allowable words $u, v$ of  $X_{l_k}$.\label{spfy}
\end{enumerate}
To see \ref{spfy}, note that  since $W$ from \ref{a3} contains $w^*$
there exist $N_1, N_2$ and a pair $\bar u\in B_{N_1}(X), \bar v\in B_{N_2}(X)$ such that $\bar uw^*{\bar v}$ is an allowable word of $(N_1+N_2+p)$ satisfying \ref{a1}.  In the proof of Lemma \ref{forsofic}, we take $S_{l_k}$ large enough so that it contains all the preimages of symbols that appear in $\bar u$ and $\bar v$. 



The idea of the proof  of Theorem \ref{main2} is similar to that of  \cite[Theorem 4.1]{iy1}, which in turn was proved using  techniques of \cite[Lemma 2.8]{mu2} and \cite[Lemmas 1, 2 and Theorem 5]{b2}. The modification of the proof has to be adapted to the fact that condition \ref{a1} replaces the lower bound condition (\ref{A1}) of an almost-additive sequence.  
We continue to use the notation from Lemma  \ref{forsofic}. 

\begin{teo}\cite{Fe4}  \label{feng} 
Let $(X, \sigma)$ be an irreducible subshift on a finite alphabet. If $\F=\{\log f_n\}_{n=1}^{\infty}$ is a Bowen sequence on $X$ satisfying \ref{a0} and \ref{a1}, then there exists a unique Gibbs measure for $\F$. Moreover, it is the unique equilibrium measure for  $\F$.
\end{teo}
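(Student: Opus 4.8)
The plan is to prove existence, the Gibbs property, ergodicity, uniqueness of the Gibbs measure, and the equilibrium statement entirely by combinatorial (partition-function) arguments, since $\F$ is only sub-additive and quasi-multiplicative and therefore no Ruelle transfer operator is available. Write $Z_n = Z_n(\F) = \sum_{w \in B_n(X)} \sup_{[w]} f_n$; finiteness of the alphabet makes every $Z_n$ finite. The first step is to obtain two-sided bounds $A^{-1} e^{n P(\F)} \le Z_n \le A\, e^{n P(\F)}$. Sub-multiplicativity $Z_{n+m} \le e^{C} Z_n Z_m$ follows from \ref{a0}, so by Fekete's lemma $P(\F) = \lim_n \frac1n \log Z_n$ exists and $Z_n \ge e^{-C} e^{n P(\F)}$. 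For the reverse bound I combine \ref{a1} with the Bowen property: given $u \in B_n(X), v \in B_m(X)$ there is a connector $w$ of length $k \le p$ with $\sup_{[uwv]} f_{n+m+k} \ge D \sup_{[u]} f_n \sup_{[v]} f_m$, and summing over all pairs $(u,v)$ gives $\sum_{k=0}^{p} Z_{n+m+k} \ge D\, Z_n Z_m$. Since $Z_{n+m+k} \le e^{C} Z_{n+m} Z_k$ with $Z_k$ bounded for $k \le p$, this yields clean super-multiplicativity $Z_{n+m} \ge D'\, Z_n Z_m$, whence $Z_n \le (D')^{-1} e^{n P(\F)}$ by the super-additive Fekete lemma.

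Second, I would construct a candidate measure. For each $N$ define the atomic probability measure $\mu_N = Z_N^{-1} \sum_{w \in B_N(X)} (\sup_{[w]} f_N)\,\delta_{x_w}$ with $x_w \in [w]$ arbitrary, form the Ces\`aro averages $\nu_N = \frac1N \sum_{j=0}^{N-1} \sigma^j_* \mu_N$, and let $\mu$ be a weak-$*$ limit point along a subsequence. Such a limit exists because $M(X,\sigma)$ is weak-$*$ compact on a finite-alphabet subshift, and the Krylov--Bogolyubov estimate $\|\nu_N - \sigma_* \nu_N\| \le 2/N$ shows $\mu$ is $\sigma$-invariant.

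Third, I would verify the Gibbs property for $\mu$. Fix $w \in B_n(X)$, $x \in [w]$, and note $\mu_N(\sigma^{-j}[w]) = Z_N^{-1}\sum_{u} \sup_{[u]} f_N$, the sum over $u \in B_N(X)$ with $u_{j+1}\cdots u_{j+n} = w$. For the upper estimate, sub-multiplicativity \ref{a0} and the Bowen bound factor $f_N \lesssim f_j \cdot f_n(\sigma^j\cdot)\cdot f_{N-j-n}(\sigma^{j+n}\cdot)$ up to the constant $e^{2C} M$, so the sum is at most $e^{2C} M\, Z_j (\sup_{[w]} f_n) Z_{N-j-n}$; the two-sided bounds $Z_i \asymp e^{i P(\F)}$ then give $\mu_N(\sigma^{-j}[w]) \le C_0\, e^{-n P(\F)} \sup_{[w]} f_n$ uniformly in $j,N$. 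Averaging over $j$ and passing to the limit (cylinders are clopen, so $\mu([w]) = \lim \nu_N([w])$) yields the upper Gibbs inequality. For the lower inequality I use \ref{a1} to attach admissible blocks before and after $w$, producing enough valid $N$-words $u$ that $\sum_u \sup_{[u]} f_N \ge c\, e^{(N-n) P(\F)} \sup_{[w]} f_n$; dividing by $Z_N \le A\, e^{N P(\F)}$ and averaging gives $\mu([w]) \ge C_0^{-1} e^{-n P(\F)} \sup_{[w]} f_n$. The same block-attaching estimate produces a quasi-Bernoulli bound $\mu([u]\cap \sigma^{-(n+k)}[v]) \ge c'\, \mu([u])\mu([v])$, from which ergodicity (indeed mixing) of $\mu$ follows.

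Finally, uniqueness and the equilibrium statement. Any two Gibbs measures $\mu,\mu'$ satisfy $C_0^{-2} \le \mu([w])/\mu'([w]) \le C_0^{2}$ for all cylinders, hence are mutually absolutely continuous; two mutually absolutely continuous ergodic measures coincide, giving uniqueness of the Gibbs measure. That $\mu$ is an equilibrium measure follows by inserting the Gibbs estimate $\log \mu([w]) = -n P(\F) + \log f_n(x) + O(1)$ into the definition of entropy, which forces $h_\mu(\sigma) + \lim_n \frac1n \int \log f_n\, d\mu = P(\F)$, matching the variational principle in Theorem \ref{vpcompact}. For uniqueness of the equilibrium measure I would show that \emph{every} equilibrium measure is Gibbs and then invoke uniqueness of the Gibbs measure; concretely, if $\nu$ is an equilibrium measure, the Jensen/concavity inequality exploited in Proposition \ref{upper} is saturated only when the conditional weights of $\nu$ agree, up to bounded factors, with the tilted weights $\sup_{[w]} f_n / Z_n$, which is exactly the Gibbs property. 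I expect the main obstacle to be precisely this last implication together with the lower Gibbs bound: both must be carried through the combinatorial threading supplied by \ref{a1}, because the mere sub-additivity of $\F$ rules out the spectral (transfer-operator) shortcut that would otherwise deliver uniqueness of the equilibrium state directly.
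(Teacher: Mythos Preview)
The paper does not prove Theorem~\ref{feng}: it is quoted verbatim from Feng~\cite{Fe4} and used as a black box in the proof of Proposition~\ref{claim}. So there is no ``paper's own proof'' to compare against. That said, your outline is essentially the argument of~\cite{Fe4}, and it is also a compact-alphabet simplification of the machinery the present paper builds in Section~\ref{sectiongibbs} (see the two-sided partition-function bounds~\eqref{p3}--\eqref{key0}, the construction of $\nu_l$ and then $\mu_n=(1/n)\sum\sigma^i\nu$, and the ergodicity argument via Lemma~\ref{key1}). Your construction takes Ces\`aro averages of the atomic measures directly rather than in two stages; both routes work in the compact setting.

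There is one genuine soft spot: your uniqueness-of-equilibrium argument. You say that saturation of the Jensen/concavity inequality forces an equilibrium measure $\nu$ to be Gibbs. That is not what the concavity argument delivers, and it is not how~\cite{Fe4} (or~\cite{KR}, or this paper in the proof of Theorem~\ref{main2}) proceeds. What the inequality actually yields is that for any union $C_n$ of $n$-cylinders,
\[
0 \;\le\; \log 2 + \nu(C_n)\log\mu(C_n) + \nu(X\setminus C_n)\log\mu(X\setminus C_n) + O(1),
\]
which forces $\nu(C_n)\to 0$ whenever $\mu(C_n)\to 0$ and hence $\nu\ll\mu$. Ergodicity of $\mu$ then gives $\nu=\mu$. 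Saturation of Jensen does \emph{not} by itself pin down $\nu([w])$ to within bounded ratio of $e^{-nP}\sup_{[w]}f_n$ cylinder by cylinder, because the inequality in Proposition~\ref{upper} is applied after summing over all $n$-cylinders and only controls the aggregate. So replace ``show $\nu$ is Gibbs'' with ``show $\nu\ll\mu$ via the concavity estimate, then use ergodicity of $\mu$,'' and your sketch is complete.
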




\begin{prop}\label{claim}  
For $n\geq  q$, there is a unique equilibrium measure for $\F\vert_{X_{l_n}}$ 
and it is Gibbs for $\F\vert_{X_{l_n}}$. Moreover, the Gibbs constant $C_{0}$ (see Definition \ref{def-gibbs}) can be chosen  independently of $X_{l_n}$.
\end{prop}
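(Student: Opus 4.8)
\textbf{Proof plan for Proposition \ref{claim}.}

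The plan is to apply Theorem \ref{feng} to each finite-alphabet system $X_{l_n}$, $n \geq q$, and then upgrade the resulting Gibbs constants to one that is uniform in $n$. First I would observe that, by Lemma \ref{forsofic} together with Remark \ref{casebowen}, each $\F\vert_{X_{l_n}}$ is a Bowen sequence on the irreducible finite-alphabet subshift $X_{l_n}$ satisfying \ref{a0} and \ref{a1} (with the constant $D$ replaced by $D/M$, and with the same specification length $p$ for all $n \geq q$). Theorem \ref{feng} then gives, for each $n \geq q$, a unique equilibrium measure $\mu_n$ for $\F\vert_{X_{l_n}}$, and $\mu_n$ is Gibbs for $\F\vert_{X_{l_n}}$: there are constants $C_0^{(n)} > 0$ and $P_n = P(\F\vert_{X_{l_n}}) \in \R$ with
\begin{equation*}
\frac{1}{C_0^{(n)}} \leq \frac{\mu_n([i_1 \dots i_k])}{\exp(-kP_n) f_k(x)} \leq C_0^{(n)}
\end{equation*}
for every $k \in \N$ and $x \in [i_1 \dots i_k] \cap X_{l_n}$.

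The substance of the proposition is the uniformity of $C_0^{(n)}$, and this is where the combinatorial input of \ref{u}, \ref{spfy} and the Bowen property enters. The standard way to get an explicit Gibbs constant in this setting (as in \cite[Lemma 2.8]{mu2} and the proof of \cite[Theorem 4.1]{iy1}) is a two-sided estimate. For the upper bound: given a cylinder $[u]$ with $u \in B_k(X_{l_n})$, one uses \ref{a1} to connect $u$ to itself (or to a fixed word) by a word $w^* \in W$ of length $p$, iterating to build long periodic-type blocks; comparing $\mu_n([u])$ against $Z_k(\F\vert_{X_{l_n}})$-type partition sums and using that $Z_1(\F\vert_{X_{l_n}}) \leq Z_1(\F) < \infty$ (from \ref{a2}) together with the uniform lower bound $D/M$ and the Bowen constant $M$, one obtains $\mu_n([u]) \leq C_0 \exp(-kP_n) f_k(x)$ with $C_0$ depending only on $M$, $D$, $p$, $C$ and $Z_1(\F)$. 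For the lower bound: one uses \ref{u}--\ref{spfy}, namely that the fixed word $w^*$ of length $p$ realizes the specification inequality (\ref{keyforsofic}) on $X_{l_n}$ for all $n \geq q$, so that any cylinder $[u]$ in $X_{l_n}$ can be "surrounded" in a controlled way; combined with the $T$-invariance of $\mu_n$ and summing over the finitely many return words, one bounds $\mu_n([u])$ from below by $(1/C_0)\exp(-kP_n) f_k(x)$ with the same kind of constant. Both estimates must be carried out so that every quantity that appears — $M$, $D$, $p$, $C$, the length $|w^*| = p$, and $Z_1(\F)$ — is independent of $n$; that is precisely the content of Lemma \ref{forsofic} and assumptions \ref{a2}, \ref{u}, \ref{spfy}.

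The main obstacle I anticipate is the lower Gibbs bound with an $n$-independent constant: while Theorem \ref{feng} hands us a Gibbs measure on each $X_{l_n}$ for free, the constant it produces a priori depends on the combinatorial width of $X_{l_n}$, and making it uniform requires re-deriving the lower estimate by hand using the fixed bridging word $w^*$ and the uniform constants from Lemma \ref{forsofic}. The role of \ref{spfy} is exactly to guarantee that $w^*$ continues to work as a bridge inside every $X_{l_k}$, $k \geq q$, so that the finite set of "return loops" through which one reconstructs $\mu_n([u])$ from $\mu_n$ of longer cylinders is the same for all $n$; once that is in place, the concavity/averaging argument of \cite[Lemma 2.8]{mu2} and \cite[Lemmas 1,2]{b2} goes through with constants depending only on the data listed above. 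Uniqueness of the equilibrium measure for each fixed $X_{l_n}$ is immediate from Theorem \ref{feng}, so no further work is needed there.
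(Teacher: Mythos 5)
Your proposal is correct in outline and follows essentially the same route as the paper: Theorem \ref{feng} supplies existence and uniqueness on each finite-alphabet $X_{l_n}$, and the uniformity of the Gibbs constant is extracted from the $n$-independent data $D/M$, $p$, $C$, $Z_1(\F)$ and the fixed bridging word $w^*$ guaranteed by Lemma \ref{forsofic} and \ref{spfy}. The one place where your description runs backwards relative to the actual argument is the phrase ``upgrade the resulting Gibbs constants'': the paper does not bound the cylinder measures of the equilibrium measure handed over by Theorem \ref{feng} directly (there is no usable formula for them, and the non-uniform constant $C_0^{(n)}$ cannot be normalized away using invariance and the specification alone). Instead it builds a candidate measure from scratch: setting $\alpha_l^Z=\sum_{u\in B_l(Z)}\sup\{f_l\vert_Z(z):z\in[u]\}$ and $\nu_l([u])=\sup\{f_l\vert_Z(z):z\in[u]\}/\alpha_l^Z$, it proves that $\{\log(e^C\alpha_n^Z)\}_{n=1}^{\infty}$ is sub-additive and $\{\log(C_1\alpha_n^Z)\}_{n=1}^{\infty}$ is super-additive with a uniform $C_1$, so that $e^{nP(\F\vert_Z)}$ is pinched between $C_1\alpha_n^Z$ and $e^C\alpha_n^Z$; the two-sided Gibbs bounds for $\nu_l$ then follow from sub-additivity (upper bound) and Lemma \ref{key7} (lower bound), survive the weak-$*$ limit and the Ces\`aro averaging that produces $\sigma_Z$-invariance (Lemma \ref{key5}), and only at the very end is the uniqueness statement of Theorem \ref{feng} invoked to identify this explicitly controlled measure with the equilibrium measure. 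You name all the right ingredients and the right precedents (\cite{mu2}, \cite{b2}, \cite{iy1}), so once the argument is run in this constructive direction it closes; as literally written, the step ``bound $\mu_n([u])$ from below \dots combined with the invariance of $\mu_n$'' is the one that would stall.
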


\begin{proof} 
The first part of Proposition \ref{claim} follows from Theorem \ref{feng}. Indeed, note that since $\F$ is a Bowen sequence satisfying 
\ref{a0}, \ref{a1}, \ref{a3} and \ref{a2} and $X_{l_n}$ contains all allowable words in $W$ for $n\geq q$, we have that
$\F\vert_{X_{l_n}}$ is  a sequence on $(X_{l_n}, \sigma_{X_{l_n}})$ satisfying (\ref{keyforsofic})  replacing $D_{n,m}/M_{n+m+p}$ by $D/M$.

In order to prove the second claim in Proposition \ref{claim} we will modify the proof  of  \cite[Claim 4]{iy1}  considering equation (\ref{keyforsofic}). 
By the assumptions, any allowable word in $W$ is an allowable word of $X_{l_n}$ for all  
$n\geq q$. Fix $X_{l_n}$, $n\geq q$, and call it $Z$.  
Define $\alpha_n^{Z}=\sum_{i_1\dots i_{n}\in B_{n}(Z)}\sup\{f_n\vert _Z(z): z\in [i_1\dots i_{n}]\}$. By  the sub-additive property of 
$\{\log e^Cf_n\}_{n=1}^{\infty}$, we have for $l,n\in\N$ that
\begin{equation}\label{p3}
\alpha_{n+l}^{Z}\leq e^C\alpha_n^{Z}\alpha_l^{Z}.
\end{equation}
Hence  $\{\log (e^{C}\alpha_n^{Z})\}_{n=1}^{\infty}$ is sub-additive.
We claim that  for some $C_1>0$ the sequence $\{\log (C_1\alpha_n^{Z})\}_{n=1}^{\infty}$ is super-additive. 
In order to show this, we adapt the arguments of the proof  of \cite[Claim 4]{iy1} to our setting. For $l \in \N$, let $\nu_{l}$ be the Borel probability measure on $Z$ defined by  
\begin{equation*}
\nu_{l}([i\dots i_{l}])=\frac{\sup\{f_{l}\vert_{Z}(z):z\in [i_{1}\dots i_{l}]\}}{\alpha_{l}^{Z}}.
\end{equation*}
By Lemma \ref{forsofic}, for any allowable words 
$u=u_{1}\dots u_{n}$ and $v=v_1\dots v_{l}$ of $Z$, $n, l\in \N$, there exists $w\in B_{\vert w\vert}(Z)\in W$, 
$0\leq \vert w\vert\leq p$
such that $uwv$ 
is an allowable word of $Z$ and that 
\begin{equation}\label{sofic1}
\sup \{f_{n+\vert w \vert +l}\vert_Z(z): z \in [ uwv]\} \geq \frac{D}{M}\sup\{f_n\vert_Z(z): z\in [u]\}\sup \{f_{l}\vert_Z(z): z\in [v]\}.
\end{equation}

For a fixed $\bar{u}\in B_n(Z)$, considering all possible $v\in B_{l}(Z)$ with $w$ satisfying (\ref{sofic1}) and then considering  all possible  $\bar{u}\in B_n(Z)$, we obtain
 
\begin{equation*}
\sum_{i=0}^{p}\alpha_{n+l+i}^{Z}\geq \frac{D}{M}  \alpha_{n}^{Z} \alpha_{l}^{Z}.
\end{equation*}
Let $D/M:=D_1$. Then for each $n, l \in\N$, there exists $0\leq i_{n,l}\leq p$ such 
that $\alpha_{n+l+i_{n,l}}^{Z}\geq (D_1 \alpha_{n}^{Z} \alpha_{l}^{Z})/(p+1).$
By sub-additivity of $\{\log (e^C\alpha_{n}^Z)\}_{n=1}^{\infty}$, we obtain 
\begin{equation*}
\alpha_{n+l+i_{n,l}}^{Z}\leq e^C\alpha_{n+l} ^{Z}\alpha_{i_{n,l}}^{Z}
\leq e^{Cp}\alpha_{n+l}^{Z} (\alpha_{1}^{Z})^{i_{n,l}}.
\end{equation*}
Letting $K=\max_{0\leq i\leq p}Z(\F)^i$, for any $n,l\in \N$ we have
\begin{equation}\label{k0}
\alpha_{n+l}^{Z}\geq D_1  \alpha_{n}^{Z} \alpha_{l}^{Z}/(e^{Cp}K(p+1)).
\end{equation}
Let $C_1=D_1/(e^{Cp}K(p+1))$. Since $P(\F\vert _{Z})=\lim_{n \rightarrow\infty}(1/n)(\log \alpha_n^{Z})$, we use the argument in \cite [Claim 4.1]{iy1}.  The sub-additivity of $\{\log (e^C \alpha_n^{Z})\}_{n=1}^{\infty}$, the super-additivity of $\{\log (C_1\alpha_n^{Z})\}_{n=1}^{\infty}$ and 
$Z_1(\mathcal{F})<\infty$ imply that
\begin{equation}\label{key0}
C_1 \alpha_n^{Z}\leq e^{P(\F\vert_Z)n} \leq e^{C}\alpha_n^{Z}.
\end{equation}

We now construct a Gibbs measure using  similar arguments as those in the proof of \cite[Theorem 5]{b2}. For  fixed $u\in B_n(Z)$, $m\in \N$, we define  $\alpha^{Z,u}_{n+m}=\sum_{ua_1\dots a_{m}\in B_{n+m}(Z)} \sup\{f_{n+m}\vert_Z (z): z\in [ua_1\dots a_{m}]\}.$ 

\begin{lema}\label{key7}
There exists $C_2>0$ such that for each fixed $u\in B_{n}(Z)$,  for $l>n+2p$, we have
$$\alpha^{Z,u}_l \geq C_2 \alpha^Z_{l-n-2p}\sup\{f_{n}\vert_Z (z): z\in [u]\}.$$ 
Note that $C_2$  is independent of $Z$.
\end{lema}
\begin{proof} For the  proof, see Section \ref{lemmas}.
\end{proof}

By the definition of the measure $\nu_{l}$ and \ref{a0}, for a fixed $u=u_1\dots u_n\in B_n(Z)$, $n<l$, we have that,
\begin{equation*}
\nu_l([u])\leq \frac{e^C\sup\{f_n\vert_Z(z): z\in [u]\}\alpha^Z_{l-n}}{\alpha^Z_l}.
\end{equation*}
Therefore, using (\ref{key0}), we obtain that for each $z\in [u]$
\begin{align*}
\frac{\nu_l([u])}{e^{-nP(\F\vert_Z)}f_n\vert_Z(z)}&\leq \frac{M\nu_l([u])}{e^{-nP(\F\vert_Z)}\sup\{f_n\vert_Z(z): z\in [u]\}}
\leq \frac{Me^{2C}\alpha^Z_{l-n}\alpha^Z_{n}}{\alpha^Z_{l}}\leq \frac{Me^{3C}}{C^2_1}.
\end{align*}
On the other hand, by Lemma \ref{key7} and (\ref{key0}), for each $z\in [u]$, for $l>n+2p$,
\begin{equation*}
\frac{\nu_l([u])}{e^{-nP(\F\vert_Z)}f_n\vert_Z(z)}\geq  \frac{\alpha^{Z,u}_{l}}{\alpha^Z_{l}e^{-nP(\F\vert_Z)}\sup\{f_n\vert_Z(z):z\in [u]\}}
\geq C_1C_2e^{-2pP(\F\vert_Z)-C}.
\end{equation*}
Noting that $e^{-2pP(\F\vert_Z)}\geq  e^{-2pP(\F)}$ if $P(\F)\geq 0$ and $e^{-2pP(\F\vert_Z)}> 1 $ if $P(\F)<0$,
there exist $C_3>0, C_4>0$, both independent of $Z$, such that for all $l>n+2p$,
\begin{equation}
C_3\leq \frac{\nu_l([u])}{e^{-nP(\F\vert_Z)}f_n\vert_Z(z)}\leq C_4 \text { for all  } z\in [u].
\end{equation}
Since the set $Z$ is compact, there exists a subsequence $\{\nu_{n_k}\}_{k=1}^{\infty}$ of $\{\nu_{n}\}_{n=1}^{\infty}$ that converges to a measure $\nu$  and for all  $z\in [u]$ \begin{equation}
C_3\leq \frac{\nu([u])}{e^{-nP(\F\vert_Z)}f_n\vert_Z(z)}\leq C_4.
\end{equation}
Now let $\mu_n=(1/{n})\sum_{i=1}^{n}\sigma_Z^{i}\nu$. We claim that any  weak limit point $\mu$ of $\{\mu_n\}_{n=1}^{\infty}$ is a $\sigma_Z$-invariant Gibbs measure on $Z$.

For each fixed $u\in B_{n}(Z)$,  define $\alpha^Z_{l+n} (u) =\sum_{a_1\dots a_{l}u\in B_{l+n}(X)}\sup\{f_{l+n}\vert_Z(z): z\in [a_1\dots a_{l}u]\}.$ Then setting $l=m+i$, for $m\in\N, 0\leq i\leq p$,  we obtain that 
$\sum_{0\leq i\leq p} \alpha^Z_{n+m+i} (u)\geq D_1\alpha^Z_m\sup\{f_{n}\vert_Z(z): z\in [u]\}$. 
Therefore, there exists $0\leq i_{n,m,u}\leq p$ such that 
\begin{equation*}
\alpha^Z_{n+m+i_{n,m,u}} (u)\geq (D_1/(p+1))\alpha^Z_m\sup\{f_{n}\vert_Z(z): z\in [u]\}.
\end{equation*}
Note that $i_{n,m, u}$ depends on $n, m$ and $u$. In the next lemma, we continue to use the above notation.

\begin{lema}\label{key5}
There exists $C_5 >0$ such that for any $0\leq i\leq p$, any $n, m\in\N$ and $u\in B_n(Z)$ 
we have
\begin{equation*}
\alpha^Z_{n+m+i} (u)\geq C_5 \alpha^Z_m\sup\{f_{n}\vert_Z(z): z\in [u]\}.
\end{equation*}
Note that $C_5$  is independent of $Z$.
\end{lema}

\begin{proof} The proof can be found in  Section \ref{lemmas}
\end{proof}

Now we apply Lemma \ref{key5} to show that $\mu$ is $\sigma_Z$-invariant. Let $u \in B_{n}(Z)$ be fixed and
set $M_2=\max\{0, P(\F)\}$. Letting $l=m+i$ for $m \in \N$ and $0\leq i \leq p$, 
\begin{align*}
\nu(\sigma_{Z}^{-l}[u])=&\sum_{v\in B_{l}(Z), vu \in B_{l+n}(Z)} \nu([vu])
\geq \sum_{vu\in B_{l+n}(Z)}\frac{C_3}{M}e^{-(l+n)P(\F\vert_Z)}\sup\{f_{n+l}\vert_Z(z): z\in [vu]\}\\
&\geq \frac{C_3C_5}{M}e^{-(m+i+n)P(\F\vert_Z)}\alpha^Z_{m}\sup\{f_{n}\vert_Z(z): z\in [u]\} \geq \frac{C_3C_5}{MC_4e^C}e^{-pM_2}\nu([u]),
\end{align*}
where in the last inequality we use (\ref{key0}). Using \ref{a0}, similarly, we obtain 
\begin{align*}
\nu(\sigma_{Z}^{-l}[u])\leq \frac{C_4e^CM}{C_1C_3}\nu([u]).
\end{align*}
Therefore, using the similar arguments as in the proof of  \cite[Theorem 5]{b2}, there exist
$\bar{C_3}, \bar{C_4}>0$ such that for $u\in B_{n}(Z)$ and $x \in [u]$ we have
\begin{equation}
\bar{C_3}\leq \frac{\mu([u])}{e^{-nP(\F\vert_{Z})}f_n\vert_{Z}(x)}\leq \bar{C_4}. 
\end{equation}
Thus $\mu$ is a Gibbs measure on $Z$. It is $\sigma_Z$- invariant because it is a weak limit of invariant measures.
By Theorem \ref{feng}, $\mu$ is the unique invariant ergodic Gibbs measure and the unique equilibrium measure for $\F\vert_{Z}$.
Hence, for $n \geq q$,  if we let $\mu_{l_n}$ be the $\sigma \vert_{Z_{l_n}}$- invariant Gibbs measure on $Z_{l_n}$,  then it satisfies for each $k\in\N$,  $u\in B_{k}(Z_{l_n})$ and every $z \in [u]$, 
 \begin{equation}\label{gibbsp}
\bar{C_3}\leq \frac{\mu_{l_n}([u])}{e^{-kP(\F \vert_{Z_{l_n}})}f_k\vert_{Z_{l_n}}(z)}\leq \bar{C_4}.
 \end{equation} 
 Clearly $\bar{C_3}$ and $\bar{C_4}$ are independent of $Z_{l_n}$.
 \end{proof}
 
In the following proof, we continue to use the notation  of the $\sigma \vert_{Z_{l_n}}$-invariant Gibbs measure  $\mu_{l_n}$  on $Z_{l_n}$ satisfying  (\ref{gibbsp}). 
 The idea in the rest of the proof  is basically the same as in \cite[Theorem 4.1]{iy1}.  However, techniques used here are slightly different, taking into account of \ref{a1}. We include some details for completeness.

\begin{proof}[Proof of Theorem \ref{main2}]
We show that the sequence $\{\mu_{l_n}\}_{n=q}^{\infty}$ of $\sigma$-invariant Borel probability measures on $X$ is tight. For this purpose, we apply Prohorov's theorem to the sequence $\{\mu_{l_n}\}_{n=q}^{\infty}$. We note that the same proof of \cite[Theorem 4.1] {iy1} holds (see also the proof of \cite[Lemma 2.7]{mu2}). Here we only state how we modify using the notation of \cite[Theorem 4.1]{iy1}.

We first note that in the proof the Gibbs property of $\mu_{l_n}$ and the property \ref {a0} of $\F\vert_{Z_{l_n}}$ are applied. Secondly the fact that, for an irreducible Markov shift $X$, $X \cap \prod_{k\geq 1} [1, n_k]$ is  a compact subset of $X$ is used  (see proof of \cite[Theorem 4.1]{iy1} for details).  Since we consider a finitely irreducible countable sofic shift $X$, there exist an irreducible countable Markov shift  $\bar{X}$ and one-block factor map  $\pi:\bar{X}\rightarrow X$ such that $\vert \pi^{-1}(i)\vert <\infty$ for each $i\in \N$.  
For a fixed $k$, we first  consider preimages of $[1,n_k]$ and call it $P_{n_k}$. Note that $P_{n_k}$ is a finite set. Then $ \bar{X} \cap \prod_{k\geq 1} P_{n_k}$ is a compact subset of $\bar{X}$. Thus $X \cap \prod_{k\geq 1} [1, n_k]$ is a compact subset of $X$.

Therefore, we conclude that  there exists a convergent subsequence $\{\mu_{{l}_{n_k}}\}_{k=1}^{\infty}$ of $\{\mu_{l_n}\}_{n=q}^{\infty}$. We denote by $\mu$ a limit point of this subsequence. Then $\mu$ is $\sigma$-invariant on $X$. By 
(\ref{gibbsp}), letting $l_{n_k}\rightarrow \infty$, we obtain
for $n\in \N$ , $u\in B_n(X)$  and each $ x\in [u]$ that,
\begin{equation}\label{gg}
\bar {C_3} \leq \frac{\mu\left([u]\right)}{e^{-nP(\F)}f_{n}(x)} \leq \bar{C_4}.
\end{equation}
Therefore, $\mu$ is a Gibbs measure for $\F$ on $X$. Next we show that $\mu$ is ergodic.  In oder to show this we apply the following lemma.

\begin{lema}\label{key1}

For fixed allowable words $u \in B_n(X), v \in B_l(X)$ and $t\in\N$, 
\begin{equation*}
\begin{split}
&\sum_{ua_1\dots a_{i+t}v\in B_{n +l+t+i}(X), 0\leq i\leq 2p} \sup \{f_{n+l+t+i}(x): x \in [ ua_1\dots a_{t+i}v]\}\\
& \geq D^2\sup\{f_n(x): x\in [u]\}\sup\{f_l(x): x\in [v]\}Z_{t}(\F).
\end{split}
\end{equation*}
\end{lema}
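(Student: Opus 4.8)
The plan is to produce, for every allowable word $c\in B_t(X)$, one admissible ``filler'' word sitting between $u$ and $v$ by applying condition \ref{a1} twice, and then to sum the resulting estimates over all $c\in B_t(X)$. Only \ref{a1} is really used here; neither the Bowen property of $\F$ nor the finiteness of $W$ in \ref{a3} enters.

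Fix $u\in B_n(X)$, $v\in B_l(X)$, $t\in\N$, and let $c=c_1\dots c_t$ be an arbitrary word in $B_t(X)$. Applying \ref{a1} to the pair $u,c$ produces $w_1\in B_{k_1}(X)$ with $0\le k_1\le p$ such that $uw_1c\in B(X)$ and
\[
\sup\{f_{n+k_1+t}(x):x\in[uw_1c]\}\ \ge\ D\,\sup\{f_n(x):x\in[u]\}\,\sup\{f_t(x):x\in[c]\}.
\]
Applying \ref{a1} once more, now to the pair $uw_1c,\,v$ (treating $uw_1c$ as a single word of length $n+k_1+t$), produces $w_2\in B_{k_2}(X)$ with $0\le k_2\le p$ such that $uw_1cw_2v\in B(X)$ and
\[
\sup\{f_{n+k_1+t+k_2+l}(x):x\in[uw_1cw_2v]\}\ \ge\ D\,\sup\{f_{n+k_1+t}(x):x\in[uw_1c]\}\,\sup\{f_l(x):x\in[v]\}.
\]
Writing $m_c:=w_1cw_2$, so that $|m_c|=t+i$ with $i:=k_1+k_2\in\{0,\dots,2p\}$, the two displays combine to
\[
\sup\{f_{n+l+t+i}(x):x\in[um_cv]\}\ \ge\ D^2\,\sup\{f_n(x):x\in[u]\}\,\sup\{f_l(x):x\in[v]\}\,\sup\{f_t(x):x\in[c]\}.
\]

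Next I would sum this inequality over all $c\in B_t(X)$. Each word $um_cv$ produced is one of the allowable words $ua_1\dots a_{t+i}v$, $0\le i\le 2p$, indexing the sum on the left-hand side of the statement, and $Z_t(\F)=\sum_{c\in B_t(X)}\sup\{f_t(x):x\in[c]\}$ is finite by \ref{a2} and Proposition \ref{cha}. The one point requiring care is that the assignment $c\mapsto m_c$ need not be injective; but if $m_c=m_{c'}$ then $c$ and $c'$ are both the length-$t$ subword of the common middle word beginning at position $k_1+1$ for the appropriate value $k_1\le p$, so $c$ is determined by $k_1$ and the fibre of $c\mapsto m_c$ has cardinality at most $p+1$. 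Grouping $B_t(X)$ according to the value of $m_c$ and combining this multiplicity bound with the composed estimate above yields
\[
\sum_{\substack{ua_1\dots a_{t+i}v\in B_{n+l+t+i}(X)\\ 0\le i\le 2p}}\sup\{f_{n+l+t+i}(x):x\in[ua_1\dots a_{t+i}v]\}\ \ge\ D^2\,\sup\{f_n(x):x\in[u]\}\,\sup\{f_l(x):x\in[v]\}\,Z_t(\F),
\]
which is the assertion.

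The step I expect to be the real obstacle is precisely this last bookkeeping: making sure that summing the per-$c$ estimates over the possibly non-injective family $\{m_c\}_{c\in B_t(X)}$ still delivers the clean constant in front of $Z_t(\F)$ (which is exactly what the at-most-$(p+1)$-fold multiplicity of $c\mapsto m_c$, together with a suitable canonical choice of the fillers $w_1,w_2$, is designed to control). The two applications of \ref{a1} and their composition are routine, as is the appeal to Proposition \ref{cha} for finiteness of $Z_t(\F)$.
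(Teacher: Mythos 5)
Your proposal follows essentially the same route as the paper's proof: fix $c\in B_t(X)$, apply \ref{a1} twice to glue $u$, $c$ and $v$ together with connecting words of length at most $p$ each (the paper attaches $v$ to $c$ first and then $u$ on the left, producing $uw_2cw_1v$; you attach $u$ first, producing $uw_1cw_2v$ --- an immaterial difference), obtain the pointwise estimate with constant $D^2$, and sum over $c\in B_t(X)$. The paper's proof ends exactly there, simply ``summing over all allowable words $c\in B_t(X)$'' without comment.

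The one place where you go beyond the paper is the injectivity issue for $c\mapsto m_c$, and your instinct that this is the delicate step is right --- but your resolution does not deliver the constant you claim. If $m_c=m_{c'}$ for $c\neq c'$, the cylinder $[um_cv]$ contributes a single term to the left-hand side, yet crediting both $c$ and $c'$ would require $\sup\{f(x):x\in[um_cv]\}\geq D^2\sup_u\sup_v(\sup_c+\sup_{c'})$, which does not follow from the two separate estimates. Your (correct) bound that each fibre of $c\mapsto m_c$ has cardinality at most $p+1$ therefore only yields the inequality with constant $D^2/(p+1)$ in front of $Z_t(\F)$, not the clean $D^2$ of the statement; recovering $D^2$ exactly would require injectivity of the assignment, which neither you nor the paper establishes (and which \ref{a1} alone does not guarantee, since it only asserts existence of some connecting word). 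This is a defect shared with, indeed inherited from, the paper's own proof; it is harmless for the only use of the lemma in the ergodicity argument of Theorem \ref{main2}, where any positive constant suffices and an extra factor $1/(2p+1)$ is introduced anyway.
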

\begin{proof} The proof can be found in Section \ref{lemmas}.
\end{proof}

Now we show that any invariant Gibbs measure for $\F$ is ergodic.
In particular, in the following, we show that $\mu$ is ergodic by proving that there exists $C_6>0$ such that given $u\in B_{n}(X)$, $v\in B_{l}(X)$ and $t\in\N$, there exists $0\leq i_{u,v,t}\leq 2p$ such that $\mu([u]\cap \sigma^{-(n+t+i_{u,v,t})}([v]))\geq ({C_6}/({2p+1}))\mu([u])
\mu([v])$. Note that the same proof holds for any invariant Gibbs measure for $\F$.

Define $\alpha_n=\sum_{i_1\dots i_{n}\in B_{n}(X)}\sup\{f_n(x): x\in [i_1\dots i_{n}]\}$. 
Let $M_2=\max\{0, P(\F)\}$. By applying Lemma \ref{key1}, 
\begin{align*}
&\sum_{i=0}^{2p}\mu([u]\cap\sigma^{-(n+t+i)}([v]))
=\sum_{i=0}^{2p} \sum_{ua_1\dots a_{t+i}v \in B_{n+l+t+i}(X)} \mu([ua_1\dots a_{t+i}v])\\
&\geq  \frac{\bar{C_3} e^{-(n+l+t)P(\F)-2pM_2}}{M} \sum_{i=0}^{2p} \sum_{ua_1\dots a_{t+i}v\in B_{n+l+t+i}(X)} 
 \sup\{f_{n+l+t+i}(x): x\in [ua_1\dots a_{t+i}v]\}\\
&\geq  \frac{\bar{C_3} D^2e^{-(n+l+t)P(\F)-2pM_2}}{M} \alpha_t \sup\{f_{n}(x): x\in [u]\}\sup\{f_{l}(x): x\in [v]\}\\
&\geq  \frac{\bar{C_3} D^2e^{-2pM_2}}{M\bar{C_4}^2e^C} \mu([u])\mu([v]) ,
\end{align*}
where in the third inequality we use Lemma \ref{key1} and in the last inequality we use (\ref{gg}).
Now letting $C_6=({\bar{C_3} e^{-2pM_2}}D^2)/({M\bar{C_4}^2e^C})$, there exists $0\leq i_{u,v,t}\leq 2p$ such that 
\begin{equation*}
\mu([u]\cap \sigma^{-(n+t+i_{u,v,t})}([v]))\geq ({C_6}/({2p+1}))\mu([u])
\mu([v]). 
\end{equation*}
The Gibbs property with ergodicity implies that $\mu$ is the unique invariant ergodic measure on $X$ that satisfies the Gibbs property for $\F$. Finally we show that, if in addition, 
$$\sum_{i\in \N}\sup \{\log f_1(x):x\in [i]\} \sup \{ f_1(x):x\in [i]\}>-\infty,$$ then  
 the unique invariant ergodic Gibbs measure $\mu$ for $\F$ is the unique equilibrium measure for $\F$.
We claim that 
$$\sum_{i\in \N}\sup \{\log f_1(x):x\in [i]\} \sup \{ f_1(x): x\in [i]\}>-\infty \textnormal { if and only if } 
-\sum_{i\in \N} \mu([i])\log \mu([i])<\infty.$$ 
To see this,  by (\ref{gg}), 
\begin{align*}
&\sum_{i\in \N} \mu([i])\log \mu([i])\leq \sum_{i\in \N} \bar{C}_4e^{-P(\F)}\sup\{f_1(x):x\in [i]\} \log (\bar{C}_4e^{-P(\F)}\sup\{f_1(x):x\in [i]\})\\ &\leq  \bar{C}_4e^{-P(\F)}(-P(\F)+\log \bar{C}_4)Z_1(\F)
+\bar{C}_4e^{-P(\F)} \sum_{i\in \N} \sup\{f_1(x):x\in [i]\} \log (\sup\{f_1(x):x\in [i]\}).
\end{align*}
Similarly, we can prove the other direction. 
Since for all $n\in \N$
$$h_{\mu}(\sigma)=-\lim_{n\rightarrow\infty}\frac{1}{n}\sum_{u_n\in B_n(X) } 
\mu([u_n])\log \mu([u_n]) \leq -\frac{1}{n}\sum_{u_n\in B_n(X) } \mu([u_n])\log \mu([u_n]),$$
we obtain that $h_{\mu}(\sigma)<\infty$. We note that for $n\in \N$, 

$$ \frac {1}{n}\int \log f_n d\mu \leq \frac {1}{n}\sum_{u_n\in B_n(X) }\sup\{\log f_n(x):x\in [u_n]\} \mu([u_n])\leq\frac {M}{n}\int \log f_n d\mu.$$  
Using (\ref{gg}), a simple calculation shows that
$$h_{\mu}(\sigma)+\lim_{n\rightarrow\infty }  \frac {1}{n}\int \log f_n d\mu=P(\F).$$
Thus  $\lim_{n\rightarrow\infty } (1/{n})\int \log f_n d\mu>-\infty$. Hence $\mu$ is an equilibrium measure.

To show that $\mu$ is the unique equilibrium measure, we use the same arguments as in \cite{KR} and only mention modified parts for our setting. 
As in \cite[Lemma 3.9]{KR}, we first claim that if $\nu\neq \mu$ is an equilibrium measure for $\F$ then $\nu$ is absolutely continuous with respect $\mu$. 
Observe that given a sequence $\{C_n\}_{n=1}^{\infty}$, where each $C_n$ is a union of cylinder sets of length $n$ of $X$, by using the concavity of $h(x)=-x\log x$ and the Gibbs property of $\mu$, we obtain
\begin{align*}\label{modify}
 0=&n(h_{\nu}(\sigma)+\lim_{n\rightarrow\infty }  \frac {1}{n}\int \log f_n d\nu-P(\F)) 
 \leq \int  \log (f_ne^C) d\nu-nP(\F)
-\sum_{w\in B_n(X)} \nu([w])\log \nu([w]) \\ 
&\leq \log 2 +\nu ([C_n])\log (\frac{\mu([C_n]}{e^C\bar C_3}) +\nu ([X\setminus C_n])\log (\frac{\mu([X\setminus C_n])}{e^C\bar C_3}).
\end{align*}
Applying the proof of  \cite[Lemma 3.9]{KR} by using the above inequalities, we obtain the claim. Then we follow the same proof found in  \cite{KR}  to show the uniqueness.



 \end{proof}

\subsection{Proofs of Lemmas \ref{key7}, \ref{key5}, and \ref{key1}} \label{lemmas}
\begin{proof}[Proof of Lemma \ref{key7}]  

Fix $n\in\N$. It is direct from   (\ref{sofic1})    that for any $m\in \N$, $u\in B_n(Z)$,
\begin{equation*}
\sum_{0\leq i\leq p} \alpha^{Z,u}_{n+m+i} \geq D_1\alpha^Z_m\sup\{f_{n}\vert_Z(z): z\in [u]\}, 
\end{equation*}
where $D_1:=D/M$.
Thus, there exists $0\leq i_{n,m,u}\leq p$ such that 
\begin{equation*}
\alpha^{Z,u}_{n+m+i_{n,m,u}} \geq \frac{D_1}{p+1}\alpha^Z_m\sup\{f_{n}\vert_Z(z): z\in [u]\}.
\end{equation*}
Fix $l> n+2p$ and set $m=l-n-2p$. 
Then there exists $i_{n,m,u}$ such that 
\begin{equation}\label{key6}
\alpha^{Z,u}_{l-2p+i_{n,m,u}} \geq \frac{D_1}{p+1}\alpha^Z_{l-2p-n}\sup\{f_{n}\vert_Z(z): z\in [u]\}.
\end{equation}
Now take $w^*\in W$ such that $\vert w^*\vert=p$. Take $ua_1\dots a_{l-n-2p+i_{n,m,u}}\in B_{l-2p+i_{n,m,u}}(Z)$ and call it $v$. 
Then by Lemma \ref{forsofic} there exists $w\in W$ such that $vww^*$ is an allowable word of $Z$ and 
\begin{equation*}
\sup\{f_{l-2p+i_{n,m,u}+\vert w\vert+p}\vert_Z (z): z\in [vww^*]\} 
\geq D_1 \sup\{f_{l-2p+i_{n,m,u}}\vert_Z (z): z\in [v]\} 
\sup\{f_{p}\vert_Z (z): z\in [w^*]\}.
\end{equation*}
In the similar manner, we can take $\bar w\in W$ such that 
\begin{align*}
&\sup\{f_{l+i_{n,m,u}+\vert w\vert+\vert \bar{w}\vert }\vert_Z (z): z\in [vww^*\bar{w}w^*]\} \\
&\geq {D_1}^2 \sup\{f_{l-2p+i_{n,m,u}}\vert_Z (z): z\in [v]\} 
(\sup\{f_{p}\vert_Z (x): x\in [w^*]\})^2.
\end{align*}
Let  $\vert w\vert=q_1$,  $\vert \bar{w}\vert=q_2$ and write $ww^* \bar{w}w^*=w_1\dots w_{2p+q_1+q_2}.$
Then using \ref{a0},
\begin{align*}
&\sup\{f_{l+i_{n,m,u}+q_1+q_2}\vert_Z (z): z\in [vww^*\bar{w}w^*]\} \\
&\leq e^{C} \sup\{f_{l}\vert_Z (z): z\in [vw_1\dots w_{2p-i_{n,m,u}}]\} \sup\{f_{i_{n,m,u}+q_1+q_2}\vert_Z (z): z\in [w_{2p-i_{n,m,u}+1}\dots w_{2p+q_1+q_2}]\} \\
&\leq e^{3pC} \sup\{f_{l}\vert_Z (z): z\in [vw_1\dots w_{2p-i_{n,m,u}}]\} \max_{0\leq i\leq 3p}Z_1(\F)^i,
\end{align*}
if $i_{n,m,u}+q_1+q_2 \geq 1$.
If $i_{n,m,u}=q_1=q_2=0$, then the second line in the above inequalities is simplified. 
If we let $M'=\max_{0\leq i\leq 3p}Z_1(\F)^i$, then 
\begin{align*}
&\sup\{f_{l}\vert_Z (z): z\in [vw_1\dots w_{2p-i_{n,m,u}}]\} \geq \frac{{D_1}^2}{e^{3pC}M'}
\sup\{f_{l-2p+i_{n,m,u}}\vert_Z (z): z\in [v]\} 
(\sup\{f_{p}\vert_Z (z): z\in [w^*]\})^2\\
&\geq \frac{{D_1}^2}{e^{3pC}M'M^2}
\sup\{f_{l-2p+i_{n,m,u}}\vert_Z (z): z\in [v]\} 
(\sup\{f_{p} (y): y\in [w^*]\})^2,
\end{align*}
where in the last inequality we use the fact that $\F$ is a Bowen sequence.
Let $\bar{m}=\min_{w\in W}(\sup\{f_{p} (y): y\in [w]\})^2$. Then 
summing over all allowable words $a_1\dots a_{l-n-2p+i_{n,m,u}}$ such that $ua_1\dots a_{l-n-2p+i_{n,m,u}}\in B_{l-2p+i_{n,m,u}}(Z)$, we obtain that 
\begin{equation*}
\alpha^{Z,u}_{l} \geq \frac{(\sup\{f_{p} (y): y\in [w^*]\})^2{D_1}^2}{e^{3pC}M'M^2(p+1)}\alpha^{Z,u}_{l-2p+i_{n,m,u}} \geq \frac{\bar{m}{D_1}^2}{e^{3pC}M'M^2(p+1)}\alpha^{Z,u}_{l-2p+i_{n,m,u}},\end{equation*}
and combining with (\ref{key6}) the result follows. 
\end{proof}

\noindent

\begin{proof}[Proof of Lemma \ref{key5}]
Fix $n, m\in \N$ and $u\in B_n(Z)$.  There exists $0\leq i_{n,m,u}\leq p$ such that 
$\alpha^Z_{n+m+i_{n,m,u}} (u)\geq (D_1/(p+1))\alpha^Z_m\sup\{f_{n}\vert_Z(z): z\in [u]\}$.
We first consider the case when $p\geq 2$.
Let $i_{n,m, u}=i_0$ and assume $i_0\geq 1$.
Let $a_1\dots a_{m+i_0}u\in B_{n+m+i_{0}}(Z)$ and call it $v$.
Let $w^*=w^*_1\dots w^*_p\in W$ such that  $\vert w^*\vert=p$.
Take $\bar{C}=\max_{0\leq i\leq 2p} Z_1(\F)^{i}$ Also, take  $D_{W}=(1/M)\min_{w \in W}\sup\{f_{\vert w\vert}(x):x\in [w]\}.$
Then by Lemma \ref{forsofic} there exists $w\in W$ such that 
\begin{equation}\label{useful}
\sup\{f_{n+m+i_{0}+p+\vert w\vert}\vert_Z(z): z\in [w^*wv]\}
\geq \frac{D}{M}\sup\{f_{p}\vert_Z (z): z\in [w^*]\}\sup\{f_{n+m+i_{0}}\vert_Z(z): z\in [v]\}.
\end{equation}
First we show that there exists $C_1>0$  such that for any $j\in \N$  such that $i_0+j\leq p$, 
\begin{equation}\label{goal}
\alpha^Z_{n+m+i_0+j} (u)\geq   C_1 \alpha^Z_{n+m+i_0} (u).  
\end{equation}
Fix $j$ and we consider two cases depending on $\vert w\vert$, $\vert w\vert> j$ and $\vert w\vert \leq j$.
Let $w=w_1\dots w_k$ and suppose $k> j$. 
Since 
\begin{align*}
&\sup\{f_{n+m+i_0+p+k}\vert_Z(z): z\in [w^*wv]\}\\
&\leq e^C\sup\{f_{p+k-j}\vert_Z(z): z\in [w^*w_1\dots w_{k-j}]\}
\sup\{f_{n+m+i_{0}+j}\vert_Z(z): z\in [w_{k-j+1}\dots w_{k}v]\}\\
&\leq  e^{2pC} \bar C  \sup\{f_{n+m+i_{0}+j}\vert_Z(z): z\in [w_{k-j+1}\dots w_{k}v]\},
\end{align*}
applying (\ref{useful}), we obtain
\begin{align}
&\sup\{f_{n+m+i_{0}+j}\vert_Z(z): z\in [w_{k-j+1}\dots w_{k}v]\}
\\
&\label{goal1} \geq 
 \frac{ D}{e^{2pC} \bar CM} \sup\{f_{p}\vert_Z (z): x\in [w^*]\}
\sup\{f_{n+m+i_0}\vert_Z(z): z\in [v]\}.
\end{align}
 
Next suppose $k\leq j\leq p-i_0$. Then 
\begin{align}
&\sup\{f_{n+m+i_0+p+k}\vert_Z(z): z\in [w^*wv]\}\\\label{care}
&\leq e^C\sup\{f_{p-(j-k)}\vert_Z(z): z\in [w^*_1\dots w^*_{p-(j-k)}]\}
\sup\{f_{n+m+i_{0}+j}\vert_Z(z): z\in [w^*_{p-(j-k)+1}\dots w^*_{p}wv]\}.
\end{align}
Hence 
\begin{align}
&\sup\{f_{n+m+i_{0}+j}\vert_Z(z): z\in [w^*_{p-(j-k)+1}\dots w^*_{p}wv]\}\\
&\label{goal2} \geq 
 \frac{ D}{e^{pC} \bar CM} \sup\{f_{p}\vert_Z (z): x\in [w^*]\}\sup\{f_{n+m+i_0}\vert_Z(z): z\in [v]\}.
\end{align}

For each $a_1\dots a_{m+i_0}u\in B_{n+m+i_{0}}(Z)$,  finding $w$ satisfying (\ref{useful}) and applying (\ref{goal1}) or (\ref{goal2}), we obtain
\begin{equation}\label{goal3}
\alpha^Z_{n+m+i_0+j} (u)\geq   \frac{ DD_W}{e^{2pC} \bar C}    \alpha^Z_{n+m+i_0} (u).  
\end{equation}
 
Next we show that there exists $C'_1>0$ such that for each $j\in\N$,  $0\leq j\leq i_0\leq p$, we have 
$\alpha^Z_{n+m+i_0-j} (u)\geq   C'_1 \alpha^Z_{n+m+i_0} (u)$. Fix $j$.
For each $v=a_1\dots a_{m+i_0}u\in B_{n+m+i_0}(Z)$, 
\begin{align*}
&\sup\{f_{j}\vert_Z(z): z\in [a_1\dots a_j]\}\sup\{f_{n+m+i_0-j}\vert_Z(z): z\in [a_{j+1}\dots a_{m+i_0}u]\}\\
&
\geq e^{-C} \sup\{f_{n+m+i_0}\vert_Z(z) z \in [v]\}.
\end{align*}
Noting that 
$\sup\{f_{j}\vert _Z(z): z\in [a_1\dots a_j]\}\leq e^{(p-1)C} \bar{C}$, we obtain
\begin{equation}\label{below}
\alpha^Z_{n+m+i_0-j} (u)\geq  \frac{1}{\bar {C}e^{pC}} \alpha^Z_{n+m+i_0} (u).
\end{equation}
For the case when $i_0=0$, we make similar arguments. We note that  (\ref{care}) is not used (calculation is simplified) when $i_0=0, j=p$ and $k=0$.
For the case when $p=1$, we consider the case when $i_0=0,1$ in a similar manner.
Hence we obtain the results.
 \end{proof}
\noindent
\begin{proof}[Proof of Lemma \ref{key1}]
For a fixed $t\in \N$, fix $c\in B_{t}(X)$. Then given $v$ and $c$,  there exists $w_1\in B_{\vert w_1\vert }(X), $ $0\leq \vert w_1\vert \leq p$ such that 
\begin{equation}\label{k0}
\sup \{f_{t+\vert w_1\vert +l}(x): x \in [ cw_1v]\} \geq D\sup\{f_t(x): x\in [c]\}\sup\{f_l(x): x\in [v]\}.
\end{equation}
Therefore, for fixed $u$ and $cw_1v$ above, there  exists $w_2\in B_{\vert w_2\vert }(X), $ $0\leq \vert w_2\vert \leq p$ such that 
\begin{align}
&\sup \{f_{n+\vert w_2 \vert +t+\vert w_1\vert +l}(y): y \in [uw_2cw_1v]\} \label{k2}\\ 
& \geq D\sup\{f_n(x): x\in [u]\}\sup \{f_{t+\vert w_1\vert +l}(x): x \in [ cw_1v]\} \label{k3}\\
 & \label{k4}  \geq D^2\sup\{f_n(x): x\in [u]\}\sup\{f_t(y): x\in [c]\}\sup\{f_l(x): x\in [v]\}. 
\end{align}
Summing over all allowable words $c\in B_{t}(X)$, each of which satisfies (\ref{k0}) and (\ref{k2})-(\ref{k4}) with some $w_1, w_2$, 
we obtain the result.
\end{proof}

\section{Application to Hidden Gibbs measures on shift spaces over countable alphabets} \label{hiddeng}
In this section, we apply the results in the previous sections to problems on factors of invariant Gibbs measures. Let $\pi: X \rightarrow Y$ be a one-block factor map between countable sofic shifts such that $\vert \pi^{-1}(i) \vert<\infty$ for each $i\in \N$. For every measure $\mu \in M(X, \sigma)$ 
the map $\pi$ induces a measure $\nu \in M(Y, \sigma)$ defined by
\begin{equation*}
\nu(B)= \pi \mu (B) := \mu (\pi^{-1} B),
\end{equation*}
where $B \subset Y$ is any Borel set. If the original measure $\mu$ is a Gibbs measure then the measure $\nu$, which is a factor of a Gibbs measure, is sometimes called \emph{hidden Gibbs measure}. Determining the properties of $\pi \mu$ is a problem that has been addressed in different settings. 
In statistical mechanics, it has been found that non-Gibbs measures can occur as images of Gibbs measures under Renormalization Group transformations and generalizations of Gibbs measures have been studied (see for  example \cite{E, EFS}).

The study of this type of measure also has attracted a great deal of attention in dynamical systems. 
For an overview of the subject, see the survey article by Boyle and Petersen \cite{BP}. 
The factor of the Gibbs measure for a continuous function need not be Gibbs for a continuous function but may be for a sequence of continuous functions. 

The main goal of this section is to study factors of Gibbs measures on finitely irreducible countable sofic shifts.
Technically, we make use of the thermodynamic formalism developed in the article, in particular the results in Section \ref{sectiongibbs} and apply a similar approach as in \cite{Y}. 
Let $(X, \sigma_X)$ and $(Y,\sigma_Y)$ be finitely irreducible countable sofic shifts.
For a one-block factor map $\pi: X\rightarrow Y$, $n\in \N, y=(y_1,\dots y_n,\dots)\in Y$, let $E_n(y)$ be a set consisting of exactly one point from 
each cylinder $[x_1\dots x_n]$ such that $\pi(x_1\dots x_n)=y_1\dots y_n$.
Given a sequence $\F=\{\log f_n\}_{n=1}^{\infty}$ on $X$,
define 
\begin{equation*}
g_n(y)= \sup_{E_n(y)} \left\{\sum_{x \in E_n(y)} f_n(x) \right\}.
\end{equation*}
We continue to use the notation in this section. Recall that we identify the set of a countable alphabet with $\N$.

\begin{teo}\label{hgibbs}
Let $(X, \sigma_X)$ be a finitely irreducible countable sofic shifts,  $(Y, \sigma_Y)$ a subshift on a countable alphabet and $\pi:X\rightarrow Y$  a one-block factor map such that for each $i \in \N$, $\vert \pi^{-1}(i)\vert < \infty$.
Let $\F=\{\log f_n\}_{n=1}^{\infty}$ be an almost-additive Bowen sequence on $X$. 
If $Z_1(\F)<\infty$, then there exists a unique invariant ergodic Gibbs measure $\mu$ for $
\F$ and the projection $\pi\mu$ of the measure $\mu$ is the unique invariant ergodic Gibbs measure for $\mathcal{G}=\{\log g_n\}_{n=1}^{\infty}$. Moreover,
\begin{align}
P_{G}(\F)=P(\F)&=\sup _{\mu\in M(X, \sigma_X)}\left\{h_{\mu}(\sigma_X)+\lim_{n\rightarrow\infty}\frac{1}{n}\int\log f_n d\mu:  \lim_{n\rightarrow\infty}\frac{1}{n}\int\log f_n d\mu>-\infty \right\} \label{x}\\
&=\sup  _{\nu\in M(Y, \sigma_Y)}\left\{h_{\nu}(\sigma_Y)+\lim_{n\rightarrow\infty}\frac{1}{n}\int\log g_n d\nu:  \lim_{n\rightarrow\infty}\frac{1}{n}\int\log g_n d\nu>-\infty \right\} \\
&=P(\mathcal{G})
<\infty.\label{y}
\end{align}

In addition, if $\sum_{i\in \N}\sup \{\log f_1(x):x\in [i]\} \sup \{ f_1(x):x\in [i]\}>-\infty$, then  
 $\mu$ is the unique equilibrium measure for $\F$ and $\pi\mu$ is the  unique equilibrium measure for $\G$.  In particular, if $(X, \sigma_X)$ is a factor of a finitely primitive countable Markov shift,  then  $\limsup$ in the definition (\ref{gurev}) of $P_{G}(\F)$ can be replaced by 
$\lim$.

\end{teo}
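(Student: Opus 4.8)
The plan is to reduce Theorem \ref{hgibbs} to the machinery already assembled in the paper, namely Theorem \ref{main2}, Theorem \ref{thmsofic} (together with Corollary \ref{special1}), Proposition \ref{defaasofic}, and the examples of Section \ref{examplespotentilas}. First I would verify that $\mathcal{G}=\{\log g_n\}_{n=1}^{\infty}$ is a Bowen sequence on $Y$ satisfying \ref{a0}, \ref{a1}, \ref{a3} and \ref{a2}; this is precisely the content advertised in Example \ref{exhiddeng}, so the bulk of the argument is to establish that claim. The Bowen property follows because $f_n$ is Bowen on $X$ and $\pi$ is one-block: if $y_i=y_i'$ for $1\le i\le n$, then the cylinders $[x_1\dots x_n]$ with $\pi(x_1\dots x_n)=y_1\dots y_n$ are the same for $y$ and $y'$, and on each such cylinder $f_n$ varies by at most the Bowen constant $M$, so $g_n(y)/g_n(y')\le M$. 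Condition \ref{a0} (sub-additivity of $\mathcal{G}+C$): given $u\in B_n(Y)$, $v'\in B_m(Y)$ with $uv'\in B_{n+m}(Y)$, each preimage word of $uv'$ in $X$ factors as a preimage word of $u$ followed by a preimage word of $v'$, so using almost-additivity of $\F$, $g_{n+m}(y)\le e^C g_n(y) g_m(\sigma_Y^n y)$; the key point is that the supremum over selections $E_{n+m}$ decouples into suprema over $E_n$ and $E_m$ because the preimage set of $uv'$ is the ``product'' of the preimage sets of $u$ and $v'$.

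Next I would establish \ref{a1} and \ref{a3} for $\mathcal{G}$. Since $X$ is finitely irreducible there are $p\in\N$ and a finite $W_1\subset\bigcup_{k=0}^p B_k(X)$ connecting any two allowable words of $X$. Given $u\in B_n(Y)$ and $v\in B_m(Y)$, pick preimages $\bar u,\bar v$ in $X$ achieving (up to the Bowen constant) the suprema $\sup\{f_n:[\,\cdot\,]\}$ and $\sup\{f_m:[\,\cdot\,]\}$, connect them by some $\bar w\in W_1$, and set $w=\pi(\bar w)\in\pi(W_1)$. Then $uwv\in B(Y)$, and using almost-additivity (the lower bound \eqref{A1}) together with the fact that the surviving term $\sum_{x\in E_{n+m+|w|}(uwv\text{-point})}f_{n+m+|w|}(x)$ is bounded below by the single product $f_n(\bar u\text{-extension})\,f_{|w|}(\cdot)\,f_m(\cdot)$ for the concatenated preimage, and that $\sup\{f_{|w|}:[\bar w]\}$ over the finite set $W_1$ is bounded below by a positive constant, one gets \ref{a1} with $W=\pi(W_1)$ — hence also \ref{a3}. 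Finally \ref{a2}: $Z_1(\mathcal{G})=\sum_{j\in\N}\sup\{g_1(y):y\in[j]\}=\sum_{j\in\N}\sup\{\sum_{i\in\pi^{-1}(j)}f_1(x):x_1=i\}\le\sum_{j\in\N}\sum_{i\in\pi^{-1}(j)}\sup\{f_1(x):x\in[i]\}=Z_1(\F)<\infty$. So $\mathcal{G}$ meets all the hypotheses of Theorem \ref{main2}.

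With that done, the structural conclusions follow mechanically. Theorem \ref{main2} applied to $\F$ (which satisfies \ref{a0}, \ref{a1}, \ref{a3}, \ref{a2} by Lemma \ref{d} and the $Z_1(\F)<\infty$ hypothesis) gives the unique invariant ergodic Gibbs measure $\mu$; applied to $\mathcal{G}$ on $Y$ it gives a unique invariant ergodic Gibbs measure $\nu$ for $\mathcal{G}$. To identify $\nu=\pi\mu$, I would check that $\pi\mu$ is itself Gibbs for $\mathcal{G}$: for $v\in B_n(Y)$ and $x\in\pi^{-1}(v)$, $\pi\mu([v])=\mu(\pi^{-1}[v])=\sum_{u\in\pi^{-1}(v)}\mu([u])$, and by the Gibbs property of $\mu$ each $\mu([u])\asymp e^{-nP(\F)}f_n(x_u)$ for $x_u\in[u]$; summing, $\pi\mu([v])\asymp e^{-nP(\F)}\sum_{u}f_n(x_u)\asymp e^{-nP(\F)}g_n(y)$ (the last comparison because the chosen $x_u$'s form an admissible selection $E_n(v)$, and any two selections give comparable sums by the Bowen property). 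So $\pi\mu$ is Gibbs for $\mathcal{G}$ with pressure constant $P(\F)$, hence by uniqueness $\pi\mu=\nu$ and $P(\mathcal{G})=P(\F)$. The variational chain \eqref{x}--\eqref{y} then combines Corollary \ref{special1} (variational principle for the almost-additive $\F$, giving $P_G(\F)=P(\F)$ and the first supremum) with Theorem \ref{thmsofic} applied to $\mathcal{G}$ (giving $P(\mathcal{G})$ equals the second supremum), glued by $P(\F)=P(\mathcal{G})$.

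For the equilibrium-measure statements, under the extra summability hypothesis $\sum_i\sup\{\log f_1(x):x\in[i]\}\sup\{f_1(x):x\in[i]\}>-\infty$, Theorem \ref{main2} gives that $\mu$ is the unique equilibrium state for $\F$. To transfer this to $\G$, I would note that the analogous condition for $\mathcal{G}$, $\sum_j\sup\{\log g_1(y):y\in[j]\}\sup\{g_1(y):y\in[j]\}>-\infty$, follows from the one for $\F$ together with $|\pi^{-1}(j)|<\infty$ (each term for $\G$ is controlled by finitely many terms for $\F$, using $\log g_1\ge \log\max_i f_1 \ge$ something bounded below by $\log f_1$ on the relevant cylinders, and $g_1\le|\pi^{-1}(j)|\max_i\sup f_1$; the sum is then dominated by a constant times $\sum_i\sup\{\log f_1(x):x\in[i]\}\sup\{f_1(x):x\in[i]\}$ plus a convergent tail involving $\log|\pi^{-1}(j)|\cdot Z_1$, which is finite since $\log|\pi^{-1}(j)|\le \log j \le$ a constant multiple of anything summable — here one uses the growth bound on $|\pi^{-1}(j)|$, or more carefully that $\sum_j \log|\pi^{-1}(j)| \sup\{f_1:[\cdot]\}<\infty$ because $g_1$ already has $Z_1(\G)<\infty$). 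Then Theorem \ref{main2} for $\G$ gives $\nu=\pi\mu$ is the unique equilibrium state for $\G$. The last sentence, replacing $\limsup$ by $\lim$ in \eqref{gurev} when $X$ is a factor of a finitely primitive countable Markov shift, is exactly the ``in particular'' clause of Proposition \ref{defaasofic} (applied to the almost-additive $\F$), so it is immediate. I expect the main obstacle to be the careful bookkeeping in the Gibbs-property transfer and in verifying \ref{a1} for $\G$: one must be precise that the supremum over selections $E_n(y)$ behaves multiplicatively under concatenation and that replacing ``sup over a cylinder of $f_n$'' by ``value of $f_n$ at a point'' only costs the Bowen constant, uniformly; the almost-additivity of $\F$ (in particular the lower bound \eqref{A1}, which fails for general sub-additive sequences) is what makes the lower bound in \ref{a1} go through, and this is the place where the hypothesis that $\F$ is \emph{almost-additive} rather than merely sub-additive is essential.
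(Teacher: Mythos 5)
Your overall architecture matches the paper's: verify that $\G$ is a Bowen sequence satisfying \ref{a0}, \ref{a1}, \ref{a3}, \ref{a2}, invoke Theorem \ref{main2} on both $X$ and $Y$, identify $\pi\mu$ with the Gibbs measure for $\G$ via the Gibbs inequality and uniqueness, and finish with the variational principle and Proposition \ref{defaasofic}. However, your verification of \ref{a1} for $\G$ — the heart of the matter — has a genuine gap. You bound $g_{n+m+|w|}$ from below by \emph{a single} concatenated preimage $\bar u\bar w\bar v$, where $\bar u$ maximizes $\sup\{f_n(x):x\in[\bar u']\}$ over $\bar u'\in\pi^{-1}(u)$. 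But $g_n(y)\asymp\sum_{\bar u'\in\pi^{-1}(u)}\sup\{f_n(x):x\in[\bar u']\}$ is a sum over \emph{all} preimages, so the single maximal term only recovers $g_n(y)/|\pi^{-1}(u)|$, and likewise for $v$. The resulting constant in \ref{a1} degrades like $1/(|\pi^{-1}(u)|\,|\pi^{-1}(v)|)$, which is not uniform: $|\pi^{-1}(i)|$ is finite for each symbol but unbounded over $i$, and for words of length $n$ the preimage count can grow without bound in $n$. Condition \ref{a1} demands a single constant $D$ valid for all $u,v$, so this route does not close. The correct argument (as in the paper) must keep \emph{all} pairs $(\bar u',\bar v')$ of preimages: each pair is joined by some $\bar w_0\in W_1$, so summing the resulting inequalities grouped by $w=\pi(\bar w_0)\in\pi(W_1)$ gives
\begin{equation*}
\sum_{w\in\pi(W_1)}\sup\{g_{n+m+|w|}(y):y\in[uwv]\}\ \geq\ \frac{D}{M}\Bigl(\sum_{\bar u'}\sup\{f_n:[\bar u']\}\Bigr)\Bigl(\sum_{\bar v'}\sup\{f_m:[\bar v']\}\Bigr)\ \geq\ \frac{D}{M}\,g_n\,g_m,
\end{equation*}
and then a pigeonhole over the finite set $\pi(W_1)$ produces one word $w_1$ with the uniform constant $D/(M|\pi(W_1)|)$. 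This is also exactly where \ref{a3} for $\G$ comes from.

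A secondary point: your transfer of the summability condition to $\G$ is muddled — you invoke bounds like $\log|\pi^{-1}(j)|\le\log j$ or deduce $\sum_j\log|\pi^{-1}(j)|\sup\{f_1:[\cdot]\}<\infty$ from $Z_1(\G)<\infty$, neither of which is available or needed. The clean argument uses only the sandwich $\frac{1}{M}\sum_{\pi(x_1)=y_1}\sup\{f_1:[x_1]\}\le\sup\{g_1:[y_1]\}$ together with the elementary inequality $\bigl(\sum_i a_i\bigr)\log\bigl(\sum_i a_i\bigr)\ge\sum_i a_i\log a_i$ for positive $a_i$, which reduces the sum for $\G$ to $\frac{1}{M}\log(\frac1M)\,Z_1(\F)$ plus $\frac{1}{M}$ times the corresponding sum for $\F$. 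The remaining steps of your proposal (Bowen property of $\G$, $Z_1(\G)\le Z_1(\F)$, the Gibbs-property computation for $\pi\mu$, $P(\F)=P(\G)$, and the final appeal to Proposition \ref{defaasofic}) are sound and agree with the paper.
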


\begin{rem}
In \cite[Theorem 3.1] {Y},  almost-additive Bowen sequences on finitely primitive subshifts are considered and the proof of Theorem \ref{hgibbs} generalizes it for those on finitely irreducible subshifts. 
\end{rem}

\begin{rem}
Another approach to show \cite[Theorem 3.1]{Y} is to apply \cite[Proposition 3.7]{Fe4} concerning relative variational principle. However, in \cite[Proposition 3.7]{Fe4}, shift spaces are assumed to be compact (subshifts on  finite alphabets) and so we cannot apply the proposition directly to show Theorem \ref{hgibbs}. 
\end{rem}

\begin{proof}[Proof of Theorem \ref{hgibbs}]
We first note that $Y$ is an irreducible countable sofic shift because $X$ is an irreducible countable sofic shift. Since $X$ is finitely irreducible, there exist $p\in \N$ and a finite set $W_1$ defined in Definition \ref{fi}.

 By Lemma \ref{d} the sequence $\F=\{\log f_n\}_{n=1}^{\infty}$ satisfies \ref{a0}, \ref{a1} with $p$, \ref{a3} with $W_1$ and \ref{a2}. Hence, by Theorem \ref{main2},
there exists a unique invariant ergodic Gibbs measure $\mu$ for $
\F=\{\log f_n\}_{n=1}^{\infty}$. Clearly  $\mathcal{G}=\{\log g_n\}_{n=1}^{\infty}$ is a Bowen sequence.
We show that $\mathcal{G}$ satisfies \ref{a0}, \ref{a1}, \ref{a3} and \ref{a2}.  
By  \cite[Lemma 3.4]{Y}, the sequence $\mathcal{G}$ satisfies \ref{a0}. To verify that condition \ref{a2} is fulfilled, note that for each symbol $i\in\N$ in $Y$ we have that
\begin{equation*}
\sup\{g_{1}(y): y\in [i]\}\leq \sum_{j \in \N, \pi (j)=i} \sup\{f_1(x): x \in [j]\}.
\end{equation*}
Then
$Z_1(\G)\leq \sum_{i\in\N}\sum_{j \in \N, \pi (j)=i} \sup\{f_1(x): x \in [j]\}=Z_1(\F)< \infty.$
Next we show that  $\mathcal{G}$ satisfies \ref{a1}.
For $y=(y_1,\dots, y_n, \dots)\in Y$, by the Bowen property,
\begin{align}
&\frac{1}{M}\sum_{x_1\dots x_n \in B_n(X), \pi (x_1\dots x_n)=y_1\dots y_n} \sup\{f_n(x): x\in [x_1\dots x_n]\}\label{g1} \leq g_{n}(y)\\\label{equivalent}
&\leq \sum_{x_1\dots x_n \in B_n(X), \pi (x_1\dots x_n)=y_1\dots y_n} \sup\{f_n(x): x \in [x_1\dots x_n]\}. 
\end{align}
We note that if $X$ is an irreducible subshift on a finite alphabet (compact case), then \cite[Lemma 5.7]{Fe4}  and  (\ref{equivalent}) imply that $\mathcal{G}$ satisfies \ref{a0} and \ref{a1}. For completeness, we  present a proof in  this non-compact setting. Since $p$ is a weak specification number of $X$, $Y$ also satisfies the weak specification
property with the specification number $p$. In particular,
for given $u\in B_n(Y)$ and  $v \in B_m(Y)$, $n, m\in \N$, there exists ${w_1}\in \pi(W_1)$ (see Example \ref{exfactor} for the notation), $0\leq \vert {w_1}\vert \leq p$ such that $u{w_1}v$ is an allowable word of $Y$. 
For $w\in \pi(W_1)$ such that $uwv$ is allowable in $Y$, 
pick a $y_w\in[uwv]$.  
Note that  given any $x_1\dots x_n\in \pi^{-1}(u)$ and $x'_1\dots x'_m\in \pi^{-1}(v)$, there exists $w_0\in W_1$ such that $x_1\dots x_nw_0x'_1\dots x'_m$ is allowable with the property \ref{a1} and 
$\pi(x_1\dots x_nw_0x'_1\dots x'_m)=u\pi (w_0)v$. Then
\begin{align*}
&\sum_{w\in \pi(W_1)}\sup\{ g_{n+m+ \vert w\vert}(y): y\in [uwv]\}\geq \sum_{w\in \pi(W_1)} g_{n+m+ \vert w\vert}(y_w)\\
&\geq  \sum_{w\in \pi(W_1)} \frac{1}{M}\sum_{\substack {x_1\dots x_n\bar{w}x'_1\dots x'_m\in B_{n+m+\vert \bar{w}\vert}(X)\\ \pi (x_1\dots x_n\bar{w}x'_1\dots x'_m)=uwv}} \sup\{f_{n+ \vert w\vert+m}(x): x\in [x_1\dots x_n\bar{w}x'_1\dots x'_m]\}\\
&\geq \frac{1}{M}\sum_{\substack{x_1\dots x_n\bar{w}x'_1\dots x'_m\in B_{n+m+\vert \bar{w}\vert}(X)\\ \pi (x_1\dots x_n\bar{w}x'_1\dots x'_m)=uwv}}D\sup\{f_n(x):  x\in [x_1\dots x_n]\}\sup\{f_m(x):  x\in [x'_1\dots x'_m]\}\\
&\geq  \frac{D}{M} \left(\sum_{\substack{x_1\dots x_n\in B_{n}(X)\\\pi(x_1\dots x_n)=u}}\sup\{f_n(x):  x\in [x_1\dots x_n]\} \right)
\left(\sum_{\substack{x'_1\dots x'_m \in B_{m}(X)\\\pi(x'_1\dots x'_m)=v}}\sup\{f_m(x):  x\in [x'_1\dots x'_m]\} \right)\\
&\geq
\frac{D}{M}\sup\{g_{n}(y): y \in [u]\} 
\sup\{g_{m}(y): y\in [v]\},
\end{align*}
where in the third inequality we take $\bar w\in W_1$ such that \ref{a1} holds with $x_1\dots x_n \bar wx'_1\dots x'_m$.
Therefore, there exists $w_1\in \pi(W_1)$ such that $uw_1v$ is allowable in $Y$ and 
\begin{equation}\label{gnbowen}
\sup\{g_{n+\vert w_1\vert+m}(y): y \in [uw_1v]\}
\geq \frac{D}{M\vert \pi(W_1)\vert }\sup\{g_{n}(y): y \in [u]\}\sup\{g_{m}(y): y \in [v]\}.
\end{equation}
Hence $\G$ satisfies \ref{a1} with the same value of $p$ that appears in the weak specification and \ref{a3} with $W=\pi (W_1)$.
By Theorem \ref{main2} the sequence  $\G$ has a unique invariant Gibbs measure $\nu$. The second and fourth equalities in Theorem \ref{hgibbs} hold because of the variational principle.

To complete the proof of the theorem, we apply ideas  found in the proof of \cite[Theorem 3.1]{Y}. Let $\mu$ be the equilibrium measure for $\F$. To show that that $\pi\mu=\nu$, 
observe that the proof of \cite[Theorem 3.7]{Y}  holds in our setting because of the definition of the Gibbs measure.  Hence,  if we define $\tilde{g}_n(y)=g_n(y)e^{-nP(\F)}$ and $\tilde{\G}=\{\log \tilde{g}_n\}_{n=1}^{\infty}$, then there is a unique invariant Gibbs measure $\tilde\nu$  for $\tilde{\G}$ such that $\pi\mu=\tilde\nu$. Hence $\pi\mu=\nu$ and it is the unique Gibbs measure for $\G$. By the definition of topological pressure, it is easy to see that  $Z_n(\G) \leq Z_n(\F)$ and $Z_n(\F)\leq {M} Z_n(\G)$. Thus $P(\F)=P(\G)$. 
Finally, we show that $\nu$ is a unique equilibrium measure by showing that  $\sum_{i\in \N}\sup \{\log g_1(y):y\in [i]\} \sup \{ g_1(y):y\in [i]\}>-\infty$.
Assume that $ \sum_{x_1\in \N} \sup\{f_{1}(x): x\in [x_1]\}\sup\{\log f_{1}(x): x\in [x_1]\}>-\infty.$

Using the definition of $g_1$  and the fact that $\F$ is a Bowen sequence we obtain that 
\begin{align*}
&\sup \{ g_1(y):y\in [y_1]\}\sup \{\log g_1(y):y\in [y_1]\} \\
&\geq \frac{1}{M} \left(\sum_{\substack{x_1\in\N\\ \pi (x_1)=y_1}} \sup\{f_{1}(x): x\in [x_1]\} \right) \log \left(\frac{1}{M}\sum_{\substack{x_1\in \N\\\pi (x_1)=y_1}} \sup\{f_{1}(x): x\in [x_1]\} \right) \\
&\geq \frac{1}{M}\cdot \left(\log \frac{1}{M} \right)\dot \sum_{\substack{x_1\in \N\\ \pi (x_1)=y_1}}\sup\{f_{1}(x): x\in [x_1]\}\\
&+ \frac{1}{M}\sum_{\substack{x_1\in \N\\\pi (x_1)=y_1}} \sup\{f_{1}(x): x\in [x_1]\}\sup\{\log f_{1}(x): x\in [x_1]\}.
\end{align*}

Therefore, summing over all allowable $y_1\in\N$, we obtain the result.
Applying Theorem \ref{main2} we have that $\nu$ is the unique equilibrium measure for $\G$.  For the last statement, we apply Proposition \ref{defaasofic}.
\end{proof}
\begin{teo}\label{ftemperedv}
Let  $(X, \sigma_X)$ be a finitely irreducible countable sofic shift, $(Y, \sigma_Y)$ a subshift on a countable  alphabet 
and $\pi:X\rightarrow Y$ a one-block factor map such that for each $i \in \N$, $\vert \pi^{-1}(i)\vert < \infty$.
Let $\F=\{\log f_n\}_{n=1}^{\infty}$ be an almost-additive sequence on $X$ with tempered variation. Then
\begin{align}
P_{G}(\F)=P(\F)&=\sup _{\mu\in M(X, \sigma_X)}\left\{h_{\mu}(\sigma_X)+\limsup_{n\rightarrow\infty}\frac{1}{n}\int\log f_n d\mu:  \limsup_{n\rightarrow\infty}\frac{1}{n}\int\log f_n d\mu>-\infty \right\} \label{x1}\\
&=\sup _{\nu\in M(Y, \sigma_Y)}\left\{h_{\nu}(\sigma_Y)+\limsup_{n\rightarrow\infty}\frac{1}{n}\int\log g_n d\nu:  \limsup_{n\rightarrow\infty}\frac{1}{n}\int\log g_n d\nu>-\infty \right\} \\
&=P(\mathcal{G}).\label{y1}
\end{align}
If $\sup f_1<\infty$, then $\limsup$ in the above equations can be replaced  by $\lim$.
\end{teo}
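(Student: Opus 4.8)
The plan is to deduce Theorem \ref{ftemperedv} by invoking the variational principle for finitely irreducible countable sofic shifts (Theorem \ref{thmsofic}, together with Corollary \ref{special1} and Proposition \ref{defaasofic}) twice — once for $\F$ on $X$ and once for the induced sequence $\G=\{\log g_n\}_{n=1}^{\infty}$ on $Y$ — and then checking that the two pressures coincide.

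On the $X$-side this is immediate: since $X$ is finitely irreducible and $\F$ is almost-additive with tempered variation, Lemma \ref{d} gives that $\F$ satisfies \ref{a0}, \ref{a4} and \ref{a5}, so Corollary \ref{special1} yields the variational principle (\ref{x1}) and $P_{G}(\F)=P(\F)$. One notes here that when $P(\F)<\infty$ the sub-additive ergodic theorem forces $\lim_{n}(1/n)\int\log f_n\,d\mu$ to exist for every invariant $\mu$, so the $\limsup$- and $\lim$-forms agree, while for $P(\F)=\infty$ one uses (\ref{vp:soficinf}); either way (\ref{x1}) holds as written.

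For the $Y$-side I would first observe that $Y=\pi(X)$ is again a finitely irreducible countable sofic shift: composing the Markov cover of $X$ with $\pi$ exhibits $Y$ as a one-block factor of a countable Markov shift with finite fibres, and a finite defining set $W_1$ for $X$ pushes forward to the finite set $\pi(W_1)\subset\bigcup_{k=0}^{p}B_k(Y)$. Because each $|\pi^{-1}(v)|$ is finite one has $g_n(y)=\sum_{w\in\pi^{-1}(y_1\dots y_n)}\sup_{[w]}f_n$, so $g_n$ depends only on $y_1\dots y_n$; thus $\G$ is a Bowen sequence, in particular of tempered variation. Condition \ref{a0} for $\G$ follows as in the proof of Theorem \ref{hgibbs} (\cite[Lemma 3.4]{Y}; equivalently, from $\sup_{[w'w'']}f_{n+m}\le e^{C}\sup_{[w']}f_n\sup_{[w'']}f_m$ after splitting $\pi^{-1}(y_1\dots y_{n+m})$). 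For \ref{a4} and \ref{a5} I would repeat, with \ref{a4}, \ref{a5} for $\F$ in place of \ref{a1}, \ref{a3}, the push-forward computation in the proof of Theorem \ref{hgibbs}: for $u\in B_n(Y)$, $v\in B_m(Y)$, applying \ref{a4}, \ref{a5} to each pair of preimages $\bar u\in\pi^{-1}(u)$, $\bar v\in\pi^{-1}(v)$ gives a connecting word in $W_1$; a pigeonhole argument over the finite set $\pi(W_1)$ selects one $w'\in\pi(W_1)$, and summing over preimages yields
\begin{equation*}
\sup\{g_{n+|w'|+m}(y):y\in[uw'v]\}\ \ge\ \frac{D_{n,m}}{|\pi(W_1)|}\,\sup\{g_n(y):y\in[u]\}\,\sup\{g_m(y):y\in[v]\}.
\end{equation*}
Hence $\G$ satisfies \ref{a4} with $D'_{n,m}=D_{n,m}/|\pi(W_1)|$ (still having $\lim_{n\to\infty}(1/n)\log D'_{n,m}=\lim_{m\to\infty}(1/m)\log D'_{n,m}=0$, since this already holds for the $D_{n,m}$ supplied by Lemma \ref{d}) and \ref{a5} with $W=\pi(W_1)$. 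Theorem \ref{thmsofic} applied to $\G$ on $Y$ then produces the third displayed supremum, equal to $P(\G)$.

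Finally, the two pressures agree because $Z_n(\G)=\sum_{v\in B_n(Y)}\sum_{w\in\pi^{-1}(v)}\sup_{[w]}f_n=\sum_{w\in B_n(X)}\sup_{[w]}f_n=Z_n(\F)$ for every $n$, so the $\limsup$'s in the two definitions of pressure coincide; combining this with (\ref{x1}), the variational principle for $\G$ on $Y$, and $P_{G}(\F)=P(\F)$ gives all four equalities. If $\sup f_1<\infty$, the remark following Theorem \ref{thmsofic} replaces $\limsup$ by $\lim$ on the $X$-side, and the analogous remark applies to $\G$ on $Y$. The step I expect to require the most care is the push-forward verification of \ref{a4} and \ref{a5} for $\G$ — tracking the combinatorics of the fibres through the pigeonhole step and confirming that the resulting $D'_{n,m}$ still grows sub-exponentially in each variable; the rest is an assembly of results already established in the paper.
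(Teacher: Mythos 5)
Your proposal is correct and follows essentially the same route as the paper: verify that $\G$ inherits \ref{a0}, \ref{a4} and \ref{a5} on $Y$ via the push-forward/pigeonhole computation from the proof of Theorem \ref{hgibbs} (with the tempered-variation constant $D_{n,m}$ replacing $D$), apply Corollary \ref{special1} and Theorem \ref{thmsofic} on both sides, and identify $P(\F)=P(\G)$ by comparing $Z_n(\F)$ with $Z_n(\G)$. The only cosmetic difference is that you use the exact identity $g_n(y)=\sum_{w\in\pi^{-1}(y_1\dots y_n)}\sup_{[w]}f_n$ (hence $Z_n(\G)=Z_n(\F)$ and no $M_{n+m+p}$ factor in $D'_{n,m}$), where the paper carries two-sided bounds with the variation constants; both yield valid sub-exponential constants and the same conclusion.
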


\begin{proof}
If $\F$ has tempered variation,  (\ref{gnbowen}) is replaced by
\begin{equation*}\label{gnbowen_new}
\begin{split}
&\sup\{g_{n+\vert w_1\vert+m}(y): y \in [uwv]\}
\\ & \geq \frac{e^{-C}Q}{M_{n+m+p}M_nM_mM_{p}\vert \pi(W_1)\vert }\sup\{g_{n}(y): y \in [u]\}\sup\{g_{m}(y): y \in [v]\},
\end{split}
\end{equation*}
where $Q$ is defined for $\F$ as in Lemma \ref{d}. 
Applying Corollary \ref{special1} and Theorem \ref{thmsofic},  we obtain (\ref{x1}) and (\ref{y1}).
To show $P(\F)=P(\mathcal{G})$, we make similar arguments as in the proof of Theorem \ref{hgibbs} .
\end{proof}
\begin{rem}
We do not know the existence of equilibrium measures for $\F$ and  $\G$  in Theorem \ref{ftemperedv}.
\end{rem}

Next we consider the images of factors of Gibbs measures for single functions. 
Recall the definition of functions in the Bowen class from Section \ref{back}.
\begin{coro}  \label{hgibbsc}
Let  $(X, \sigma_X)$ be a finitely irreducible countable sofic shift, $(Y, \sigma_Y)$ a subshift on a countable alphabet 
and $\pi:X\rightarrow Y$ a one-block factor map such that for each 
$i \in \N$, $\vert \pi^{-1}(i)\vert < \infty$.
Let $f\in C(X)$ be in the Bowen class and suppose $Z_1(f)<\infty$. Then there exists a unique invariant ergodic Gibbs measure $\mu$ for $
f$. Setting $f_n=e^{S_n(f)}$ in $\G$, the projection $\pi\mu$ of the measure $\mu$ is the unique invariant ergodic Gibbs measure for $\mathcal{G}=\{\log g_n\}_{n=1}^{\infty}$. Then
\begin{align}
P_{G}(f)=P(f)&=\sup _{\mu\in M(X, \sigma_X)}\left\{h_{\mu}(\sigma_X)+\int f d\mu:  \int f  d\mu>-\infty \right\} \label{x} \\
&=\sup _{\nu\in M(Y, \sigma_Y)}\left\{h_{\nu}(\sigma_Y)+\lim_{n\rightarrow\infty}\frac{1}{n}\int\log g_n d\nu:  \lim_{n\rightarrow\infty}\frac{1}{n}\int\log g_n d\nu>-\infty \right\} \\
&=P(\mathcal{G})
<\infty.\label{y}
\end{align}
In addition, if $\sum_{i\in \N}\sup \{\log f(x):x\in [i]\} \sup \{ f(x):x\in [i]\}>-\infty$, then  
$\mu$ is the unique equilibrium measure for $f$ and $\pi\mu$ is the  unique equilibrium measure for $\G$. 
 \end{coro}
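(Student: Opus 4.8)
The plan is to obtain Corollary~\ref{hgibbsc} as a direct specialization of Theorem~\ref{hgibbs} to the additive sequence attached to $f$. Following the procedure recalled in Remark~\ref{rem:con_bow}, put $f_n(x)=e^{(S_nf)(x)}$ for each $n\in\N$ and $\F=\{\log f_n\}_{n=1}^{\infty}$. By construction $\F$ is additive, hence in particular almost-additive; and since $f$ lies in the Bowen class, $\F$ is a Bowen sequence by the very definition of that class. Because $f_1=e^{f}$, we have $Z_1(\F)=\sum_{i\in\N}\sup\{e^{f(x)}:x\in[i]\}=Z_1(f)<\infty$. Thus $\F$ is an almost-additive Bowen sequence on the finitely irreducible countable sofic shift $X$ satisfying $Z_1(\F)<\infty$, and the sequence $\G=\{\log g_n\}_{n=1}^{\infty}$ with $g_n(y)=\sup_{E_n(y)}\{\sum_{x\in E_n(y)}f_n(x)\}$ is precisely the one associated with $\F$ in Theorem~\ref{hgibbs}.

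Next I would invoke Theorem~\ref{hgibbs} for $\F$ and $\G$. It produces a unique invariant ergodic Gibbs measure $\mu$ for $\F$; since a measure is Gibbs (resp. an equilibrium measure) for $\F=\{\log e^{S_nf}\}$ exactly when it is Gibbs (resp. an equilibrium measure) for the single function $f$, this is the first assertion of the corollary. The same theorem gives that $\pi\mu$ is the unique invariant ergodic Gibbs measure for $\G$, and the chain of equalities $P_G(\F)=P(\F)=\sup_{\mu\in M(X,\sigma_X)}\{\,\cdots\,\}=\sup_{\nu\in M(Y,\sigma_Y)}\{\,\cdots\,\}=P(\G)<\infty$. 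To rewrite the first supremum in the form stated in the corollary, I would use that $\F$ is additive, so by the (sub)additive ergodic theorem $\lim_{n\to\infty}(1/n)\int\log f_n\,d\mu=\int f\,d\mu$ for every $\mu\in M(X,\sigma_X)$, whence the condition $\lim_{n\to\infty}(1/n)\int\log f_n\,d\mu>-\infty$ reads $\int f\,d\mu>-\infty$; moreover $P(f)$ and $P_G(f)$ are, by Definitions~\ref{defpressure} and~\ref{gurevich}, nothing but $P(\F)$ and $P_G(\F)$. This yields the displayed variational principle together with strict finiteness of the pressure.

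Finally, for the equilibrium-measure statement I would check that the extra hypothesis carries over. With $f_1=e^{f}$ and $\log f_1=f$, the assumption that $\sum_{i\in\N}\sup\{f(x):x\in[i]\}\,\sup\{e^{f(x)}:x\in[i]\}>-\infty$ is literally the condition $\sum_{i\in\N}\sup\{\log f_1(x):x\in[i]\}\,\sup\{f_1(x):x\in[i]\}>-\infty$ appearing in the second part of Theorem~\ref{hgibbs}. Applying that part gives that $\mu$ is the unique equilibrium measure for $\F$, equivalently for $f$, and that $\pi\mu$ is the unique equilibrium measure for $\G$.

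There is no genuine difficulty here: the argument is bookkeeping built on Theorem~\ref{hgibbs}. The only points that deserve a line of explanation are the identification of Gibbs and equilibrium measures for the single function $f$ with those for the additive sequence $\{\log e^{S_nf}\}$, the replacement of $\lim_{n\to\infty}(1/n)\int\log f_n\,d\mu$ by $\int f\,d\mu$ via the ergodic theorem, and the matching of the two summability conditions through the relation $f_1=e^f$; one also notes that $Z_1(f)<\infty$ forces $\sup f<\infty$, so all quantities involved are well defined and bounded above.
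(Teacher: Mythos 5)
Your proposal is correct and follows exactly the route of the paper, whose proof of this corollary is simply ``apply Theorem~\ref{hgibbs}'' to the additive sequence $\F=\{\log e^{S_nf}\}_{n=1}^{\infty}$; you merely spell out the bookkeeping (identification of Gibbs/equilibrium measures for $f$ with those for $\F$, the ergodic-theorem replacement of $\lim_n(1/n)\int\log f_n\,d\mu$ by $\int f\,d\mu$, and the matching of the summability conditions via $f_1=e^f$) that the paper leaves implicit.
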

\begin{proof}
The result is clear by applying Theorem \ref{hgibbs}. 
\end{proof}
\begin{rem}
 This is a generalization of \cite[Corollary 3.2]{Y}. 
 \end{rem}




\section{Other applications}\label{applications}

\subsection{Product of matrices and maximal Lyapunov exponents}

A natural and interesting application of the non-additive version of thermodynamic formalism is the study of the norm of products of matrices. Indeed, let $M_d(\R)$ be the set of real valued $d \times d$ matrices and $\| \cdot \|$ be a sub-multiplicative norm. Let  $\{A_1, A_2, \dots \}$ be a countable set in $M_d(\R)$. Let  $(X, \sigma)$ be a finitely irreducible countable sofic shift. If $w=(i_1, i_2, \dots) \in X$,  define the sequence of functions $\Phi=\{\log \phi_n\}_{n=1}^{\infty}$ by
\[\phi_n(w)= \Vert A_{i_{n}}  \cdots A_{i_2} A_{i_1}\Vert.\]
Since
\begin{equation*}
\Vert AB \Vert \leq \Vert A  \Vert \Vert B \Vert,
\end{equation*}
the sequence $\Phi$ is sub-additive. It is a direct consequence of the sub-additive ergodic theorem \cite{ki} that if $\mu \in M(X, \sigma)$ is an ergodic measure, then for $\mu$-almost every $w\in X$ 
 \begin{equation*}
\lim_{n\to \infty} \frac{1}{n} \int \log \phi_n \ d\mu =  \lim_{n\to \infty} \frac{1}{n} \log \phi_n(w).
\end{equation*}
The number
\[\lambda(w) := \lim_{n\to \infty} \frac{1}{n} \log \phi_n(w),\]
is called \emph{Maximal Lyapunov exponent of $w$}, whenever the limit exists. This number was originally studied in the context in which $X$ is the full shift on a finite alphabet with  a finite collection matrices with strictly positive entries (see the work by 
Furstenberg and Kesten from 1960 \cite{fk}). Ever since, the assumptions on the space and on the matrices has been generalized in wide ranges. The techniques developed in this article allow for another generalization that can be thought of as a non-compact version of the results obtained by Feng in \cite{Fe3}.

\begin{prop} \label{lyap}
Let $(X, \sigma)$ be a finitely irreducible countable sofic shift. Let $\{A_1, A_2, \dots \}$ be a countable set of matrices  in $M_d(\R)$ having non-negative entries. Let $\Phi=\{\log \phi_n\}_{n=1}^{\infty}$  be a the sequence of functions such that $\phi_n:X \to \R$ is defined by $\phi_n(w)= \Vert A_{i_{n}}  \cdots A_{i_2} A_{i_1}\Vert$. If $\Phi$ satisfies \ref{a1}, \ref{a3} and \ref{a2}, then there exists  a unique invariant ergodic Gibbs measure $\mu$ for $\Phi$. Moreover, if in addition 
\begin{equation*}
\sum_{i=1}^{\infty} \|A_i\|  \log \| A_i \| > - \infty
\end{equation*}
 then $\mu$  is the unique equilibrium measure for $\Phi$ on $X$, that is
 \begin{equation*}
 P(\Phi)= h_{\mu}(\sigma) + \lim_{n \to \infty} \frac{1}{n} \int \log \phi_n d \mu.
 \end{equation*}
\end{prop}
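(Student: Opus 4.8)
The plan is to deduce Proposition \ref{lyap} directly from Theorem \ref{main2} by checking that the sub-additive sequence $\Phi=\{\log\phi_n\}_{n=1}^{\infty}$ is a Bowen sequence satisfying all four hypotheses \ref{a0}, \ref{a1}, \ref{a3} and \ref{a2} of that theorem, and then interpreting the two extra summability conditions in the present notation. First I would observe that sub-multiplicativity of the norm gives $\phi_{n+m}(w)=\Vert A_{i_{n+m}}\cdots A_{i_1}\Vert\le \Vert A_{i_{n+m}}\cdots A_{i_{n+1}}\Vert\,\Vert A_{i_n}\cdots A_{i_1}\Vert=\phi_m(\sigma^n w)\phi_n(w)$, so $\Phi$ is sub-additive, i.e.\ \ref{a0} holds with $C=0$. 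Next, since $\phi_n$ depends only on the first $n$ coordinates $i_1,\dots,i_n$ of $w$, it is constant on each cylinder $[i_1\dots i_n]$; hence $M_n=1$ for all $n$ in Definition \ref{bowen}, so $\Phi$ is trivially a Bowen sequence with constant $M=1$. Conditions \ref{a1}, \ref{a3} and \ref{a2} are assumed outright in the statement (and $Z_1(\Phi)=\sum_i\sup\{\phi_1(x):x\in[i]\}=\sum_i\Vert A_i\Vert<\infty$ is exactly the content of \ref{a2}, so in particular $P(\Phi)<\infty$ by the remark after Theorem \ref{main2}).

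With these verifications in hand, Theorem \ref{main2} applies verbatim: there is a unique invariant ergodic Gibbs measure $\mu$ for $\Phi$ on $X$. For the second assertion I would simply translate the additional hypothesis of Theorem \ref{main2}, namely
\begin{equation*}
\sum_{i\in\N}\sup\{\log f_1(x):x\in[i]\}\,\sup\{f_1(x):x\in[i]\}>-\infty,
\end{equation*}
into the matrix setting. Because $\phi_1$ is constant equal to $\Vert A_i\Vert$ on the cylinder $[i]$, one has $\sup\{\log\phi_1(x):x\in[i]\}=\log\Vert A_i\Vert$ and $\sup\{\phi_1(x):x\in[i]\}=\Vert A_i\Vert$, so the displayed condition becomes precisely $\sum_{i=1}^{\infty}\Vert A_i\Vert\log\Vert A_i\Vert>-\infty$. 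Thus, under this hypothesis, Theorem \ref{main2} yields that $\mu$ is the unique equilibrium measure for $\Phi$, i.e.\ $P(\Phi)=h_\mu(\sigma)+\lim_{n\to\infty}(1/n)\int\log\phi_n\,d\mu$, where the limit exists and is finite by the sub-additive ergodic theorem applied to $\Phi$ (as already recorded in the discussion preceding the proposition).

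There is essentially no serious obstacle here; the proposition is a clean specialization. The only point requiring a line of care is the identification $M_n=1$: one must note that a product of matrices $A_{i_n}\cdots A_{i_1}$ truly depends only on the coordinates $i_1,\dots,i_n$ and not on the tail of $w$, which is immediate from the definition of $\phi_n$, so the quotient $\phi_n(x)/\phi_n(y)$ equals $1$ whenever $x_i=y_i$ for $1\le i\le n$. (The non-negativity of the entries of the $A_i$ is not strictly needed for this argument, but it is the natural hypothesis under which \ref{a1} can be expected to hold, as in Feng's work \cite{Fe3}, so I would keep it in the statement.) Everything else is a direct invocation of Theorem \ref{main2}, together with the observation that $P(\Phi)<\infty$ follows from \ref{a2} via Lemma \ref{cha1}.
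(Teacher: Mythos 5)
Your proposal is correct and takes essentially the same route as the paper, which itself only records that $\phi_n$ is constant on cylinders of length $n$ (so the Bowen condition holds with $M_n=1$) and then invokes Theorem \ref{main2}; your additional verifications of \ref{a0} via sub-multiplicativity of the norm and the translation of the summability hypothesis into $\sum_i \Vert A_i\Vert \log\Vert A_i\Vert>-\infty$ are exactly what is implicit there. (One minor citation slip: finiteness of $P(\Phi)$ from \ref{a2} is the content of Proposition \ref{cha}, not Lemma \ref{cha1}, but this does not affect the argument.)
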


Note that $\phi_n$ is constant in cylinders of length $n$, therefore the Bowen condition is satisfied. Proposition \ref{lyap} is an extension of \cite[Proposition 7.1]{iy1} in which the same conclusion was obtained under the assumption that $X$ is a countable Markov shift satisfying the BIP condition and $\Phi$ is almost-additive.

\subsection{The singular value function} Thermodynamic formalism has been used, at least since the mid 1970«s, to study the (Hausdorff) dimension of certain dynamically defined sets. This approach has been rather successful when the dynamical system is conformal. However, in dimension two (or higher) where a typical dynamical system is non-conformal the results obtained are fairly weak. With the purpose of obtaining better estimates on the dimension of non-conformal repellers, Falconer \cite{f} introduced the singular value function. The singular values $s_1(A), s_2(A)$ of a $2\times 2$ matrix $A$ are the eigenvalues, counted with multiplicities, of the matrix $(A^*A)^{1/2}$, where $A^*$ denotes the transpose of $A$. The singular values can be interpreted as the length of the semi-axes of the ellipse which  is the image of the unit ball under $A$.

Let $f : \R^2 \mapsto  \R^2$ be a $C^1$ map and let  $\Lambda \subset \R^2$ be a repeller of $f$.  That is, the set $\Lambda$ is a (not necessarily  compact), $f$-invariant, and the map $f$ is expanding on $\Lambda$, i.e., there exist $c > 0$ and $\beta > 1$ such that 
\[ \|d_xf^n(v) \| \geq c\beta^n \|v\|, \]
for every $x \in \Lambda$, $n \in \N$ and $v \in T_x \R^2$. We will also assume  that there exists  an open set $U \subset \R^2$  such that $\Lambda \subset U$ and $\Lambda = \cap_{n \in \N} f^n(U)$ and that $f$ restricted to $\Lambda$ can be coded by an irreducible countable sofic shift. For each $x \in  \R^2$ and $ v \in  T_xR^2$, we define the \emph{Lyapunov exponent} of $(x,v)$ by
\begin{equation*}
\lambda(x,v):=\limsup_{n \to \infty} \frac{1}{n} \log \| d_xf^n v  \|.
\end{equation*}
For each $x \in \R^2$, there exists a positive integer $s(x)  \leq 2$, numbers $\lambda_1(x) \geq \lambda_2(x)$, and linear subspaces
\[ \{0\} =E_{s(x)+1}(x) \subset  E_{s(x)}(x) \subset E_{1}(x)=T_xR^2,\]
such that
\[E_i(x)=\left\{v \in T_x\R^2 : \lambda(x,v)=\lambda_i(x) \right\}\]
and $ \lambda(x,v)=\lambda_i(x)$ if $v \in E_i(x) \setminus E_{i+1}(x)$.
The functions, $\phi_{i,n}: \Lambda \to \R$  are defined by
\begin{equation*}
 \phi_{i,n}(x)= \log s_i(d_xf^n)  
\end{equation*}
and called \emph{singular value functions}.   It  follows from Oseledets' multiplicative ergodic theorem  that for each finite $f-$invariant measure $\mu$ there exists a set $X \subset \R^2$ of full $\mu$ measure such that
\begin{equation} \label{eq:lya}
 \lim_{n \to \infty} \frac{\phi_{i,n}(x)}{n}= \lim_{n \to \infty} \frac{1}{n} \log s_i(d_xf^n)= \lambda_i(x).
 \end{equation}
It was proved by Barreira and Gelfert  \cite[Proposition 4]{bg} that
if the dynamical system $f$ has dominated splitting  (see \cite[p.234]{b3} for a precise definition) and $\Lambda$ is compact then the sequences 
$\{\phi_{i,n}\}_{n=1}^{\infty}$ are almost-additive. The methods developed in this article allow us to study the singular value function in a broader context. In particular, it is a consequence of the variational principle that
\begin{prop}
Let $(f, \Lambda)$ be a non-conformal repeller that can be coded by an irreducible countable sofic shift. If the singular value functions $\Phi$ satisfy   
 \ref{a1}, \ref{a3} and \ref{a2}, then there exists  a unique invariant ergodic Gibbs measure $\mu$ for $\Phi$. 
\end{prop}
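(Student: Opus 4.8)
The plan is to reduce this proposition to Theorem~\ref{main2}, which is the only substantive input needed. First I would verify that the sequence $\Phi=\{\log\phi_{i,n}\}_{n=1}^{\infty}$ satisfies the hypotheses of Theorem~\ref{main2}, namely \ref{a0}, \ref{a1}, \ref{a3}, \ref{a2} (plus the Bowen property), on the finitely irreducible countable sofic shift $(X,\sigma)$ coding $(f,\Lambda)$. Three of these are assumed outright in the statement: \ref{a1}, \ref{a3}, \ref{a2}. So the work consists of establishing \ref{a0} (sub-additivity up to a constant) and the Bowen condition for the singular value function.

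For \ref{a0}: the singular value functions $\phi_{i,n}(x)=\log s_i(d_xf^n)$ are, up to a bounded multiplicative error coming from the non-conformality/distortion of $f$ on the compact pieces, sub-multiplicative. Concretely, $s_i(d_xf^{n+m}) = s_i\big(d_{\sigma^n x}f^m \cdot d_x f^n\big)$, and the singular values of a product of matrices satisfy $s_i(AB)\le s_1(A)s_i(B)$ and $s_i(AB)\le s_i(A)s_1(B)$; combined with the uniform bounds on the derivative cocycle over $\Lambda$ (using that the coding is by a sofic shift and $f\in C^1$ with bounded distortion on the relevant sets, cf.\ the dominated splitting setting of Barreira--Gelfert \cite{bg}), one gets $\phi_{i,n+m}(x)\le \phi_{i,n}(x)+\phi_{i,m}(\sigma^n x)+C$ for a constant $C\ge 0$ depending only on $f$ and the coding. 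Hence $\Phi+C$ is sub-additive, i.e.\ \ref{a0} holds. For the Bowen condition: since the derivative $d_xf$ depends only on the point $x$ and the coding is one-block, the quantity $\phi_{i,n}(x)$ restricted to a cylinder $[i_1\dots i_n]$ varies only through bounded-distortion effects of the $C^1$ map along the expanding repeller; the expanding property (the constants $c>0$, $\beta>1$) together with standard bounded-distortion estimates for $C^1$ expanding maps on each compact approximating piece gives a uniform bound $M_n\le M$, so $\Phi$ is a Bowen sequence. Alternatively, in many natural coding situations $\phi_{i,n}$ is literally constant on cylinders of length $n$ (as in Proposition~\ref{lyap}), in which case the Bowen condition is immediate; I would state the argument so it covers this case as well.

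Once these verifications are in place, the proposition follows directly: Theorem~\ref{main2} applied to $\F=\Phi$ on the finitely irreducible countable sofic shift $(X,\sigma)$ yields the existence and uniqueness of an invariant ergodic Gibbs measure $\mu$ for $\Phi$. I would also remark, to match the phrasing used for Proposition~\ref{lyap}, that the $\limsup$ in \eqref{eq:lya} can be taken as a genuine limit $\mu$-almost everywhere by the sub-additive ergodic theorem, so that $\mu$ realises the Lyapunov exponent $\lambda_i$ of $f$ in the appropriate sense.

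The main obstacle is the verification of \ref{a0} and the Bowen property for the singular value cocycle in this non-compact, non-conformal setting: one has to make sure the bounded-distortion and uniform-norm estimates that are automatic on a compact repeller still hold on the relevant compact approximating subsystems $X_{l_n}$ with constants independent of $n$, and that the coding by a countable sofic shift (rather than a countable Markov shift) does not introduce extra difficulties. This is exactly the kind of uniformity that Lemma~\ref{forsofic} and the construction of the $X_{l_n}$ in Section~\ref{sofic} are designed to handle, so I would lean on those; but it is the one place where the argument is not purely formal. Everything downstream is a black-box application of Theorem~\ref{main2}.
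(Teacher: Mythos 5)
Your overall strategy---reduce everything to Theorem~\ref{main2} after checking the hypotheses not listed in the statement---is exactly what the paper intends; in fact the paper gives no proof at all for this proposition beyond asserting that it is a consequence of the variational principle and of the Gibbs-measure theorem, so the black-box reduction is the ``official'' argument. You are also right to flag that \ref{a0} and the Bowen property are needed for Theorem~\ref{main2} but are not among the stated hypotheses; the paper silently assumes them (for the matrix-product analogue, Proposition~\ref{lyap}, it at least remarks that the functions are constant on cylinders, which disposes of Bowen there).

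However, your verification of \ref{a0} contains a genuine error for $i\ge 2$. The inequalities you invoke, $s_i(AB)\le s_1(A)\,s_i(B)$ and $s_i(AB)\le s_i(A)\,s_1(B)$, bound $s_i$ of a product by a \emph{mixed} product involving $s_1$; they do not yield $s_i(AB)\le C\,s_i(A)\,s_i(B)$, which is what $\phi_{i,n+m}\le\phi_{i,n}+\phi_{i,m}\circ\sigma^n+C$ requires. In dimension two the second singular value is in fact \emph{super}-multiplicative (from $s_1s_2=|\det|$ and sub-multiplicativity of $s_1$), and the ratio $s_2(AB)/(s_2(A)s_2(B))$ can be arbitrarily large (e.g.\ $A=\mathrm{diag}(K,1)$, $B=\mathrm{diag}(1,K)$), so no uniform constant $C$ exists without an extra hypothesis. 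Sub-additivity (indeed almost-additivity) of $\{\phi_{2,n}\}$ is exactly what dominated splitting buys you via Barreira--Gelfert, but dominated splitting is not assumed in the proposition, and on a non-compact repeller the ``uniform bounds on the derivative cocycle'' you appeal to are not available for free. The honest fix is either to restrict to $i=1$ (where $C=0$ works), or to add \ref{a0} and the Bowen property (or dominated splitting plus uniform distortion control) as explicit hypotheses---which is evidently what the paper tacitly does. The same caution applies to your Bowen-property argument: bounded distortion is not automatic for a map that is merely $C^1$.
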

We stress that Gibbs measures are of particular importance in the dimension theory of dynamical systems.


\begin{thebibliography}{11}

\bibitem[BF]{bf} J.\ Barral,  D.J.\ Feng, {Weighted thermodynamic formalism on subshifts and applications.}
{\em Asian J. Math} 16 (2012), no. 2, 319--352.

\bibitem[B1]{b1}  L.\ Barreira,   \emph{A non-additive thermodynamic formalism and applications to dimension theory of hyperbolic dynamical systems.} Ergodic Theory Dynam. Systems 16 (1996), 871--927.

\bibitem[B2]{b2} L.\ Barreira, \emph{Nonadditive thermodynamic formalism: equilibrium and Gibbs measures.}  Discrete Contin. Dyn. Syst. 16 (2006), 279--305.

\bibitem[B3]{b3} L.\ Barreira, \emph{Almost additive thermodynamic formalism: some recent developements.} Rev. Math. Phys. 22 (2010), 1147--1179.

\bibitem[B4]{b4} L.\ Barreira, \emph{Dimension and recurrence in hyperbolic dynamics.} Progress in Mathematics, 272. Birkh\"auser Verlag, Basel, 2008.

\bibitem[B5]{b5} L.\ Barreira, \emph{Thermodynamic Formalism and Applications to Dimension Theory.} Progress in Mathematics, 294. Birkh\"auser Verlag, Basel, 2011.

\bibitem[BG]{bg}   L.\ Barreira and K.\ Gelfert,   \emph{Multifractal analysis for Lyapunov exponents on nonconformal repellers.} Comm. Math. Phys. 267 (2006), 393--418.




\bibitem[Bo]{bo} R.\ Bowen, \emph{Some systems with unique equilibrium states.} Math. Syst. Theory 8 (1974),
193--202.

\bibitem[BP]{BP} M.\ Boyle, K.\ Petersen, \emph{Hidden Markov processes in the context of symbolic dynamics.} Entropy of hidden Markov process and connections to dynamical systems, London Math. Soc. lecture Note Ser., 385, 5--71,Cambridge
Univ. Press, Cambridge, 2011.







\bibitem[CFH]{cao} Y. L.\ Cao, D.J.\ Feng,  W.\ Huang, \emph{The thermodynamic formalism for sub-additive potentials.} Discrete Contin. Dyn. Syst. 20 (2008), 639--657.

\bibitem[CU1]{CUO} J. R.\ Chazottes,  E.\ Ugalde, 
{Projection of Markov measures may be Gibbsian.} {\em J. Stat. Phys.} 111 (2003), no. 5-6, 1245--1272.

\bibitem [CU2]{CU} J. R.\ Chazottes, E.\ Ugalde, {On the preservation of Gibbsianess under symbol amalgamation.} Entropy of hidden Markov process and connections to dynamical systems, London Math. Soc. lecture Note Ser., 385, 72--97, Cambridge
Univ. Press, Cambridge, 2011.
 
 
\bibitem[DS]{ds}   R.\ Dougall, R.\ Sharp, \emph{Amenability, critical exponents of subgroups and growth of closed geodesics.} Math. Ann. 365 (2016), no. 3-4, 1359--1377.

\bibitem[E]{E}  A.\ van Enter, \emph{Renormalization Group, Non-Gibbsian states, their relationship and further developments.} RIMS Lecture Notes 1483: Applications of Renormalization Group Methods in the Mathematical Sciences, 49--64, 2006. 

\bibitem[EFS]{EFS} A.\ van Enter, R.\ Fernandez, A.\ Sokal, \emph{Regularity properties and pathologies of position-space renormalization-group transformations: scope and limitations of Gibbsian theory.} {\em J. Stat. Phys.} 72 (1993), no. 5-6, 879--1167. 


\bibitem[F1]{f}  K. J.\ Falconer,  \emph{A subadditive thermodynamic formalism for mixing repellers.} J. Phys. A 21 (1988), no. 14, 737--742.




\bibitem[Fe1]{Fe1} D. J.\ Feng,\emph{ Lyapunov exponents for products of matrices and multifractal analysis, Part I:  Positive matrices.} Israel J. Math. 138 (2003), 353--376. 

\bibitem[Fe2]{Fe2} D. J.\ Feng, \emph{The variational principle for products of non-negative matrices.} Nonlinearity 17  (2004), 447--457.


\bibitem[Fe3]{Fe3} D. J.\ Feng, \emph{Lyapunov exponents for products of matrices and multifractal analysis, Part II:  General matrices,} Israel J. Math. 170 (2009), 355--394. 

\bibitem[Fe4]{Fe4} D. J.\ Feng, \emph{Equilibrium states for factor maps
between subshifts.} Adv. Math. 226 (2011), 2470--2502


\bibitem[FS]{fs} D.J.\ Feng,  P.\  Shmerkin, \emph{Non-conformal repellers and the continuity of pressure for matrix cocycles.} Geom. Funct. Anal. 24 (2014), no. 4, 1101--1128.


\bibitem[FH]{FH} D.J.\ Feng, W.\ Huang,
{Lyapunov spectrum of asymptotically sub-additive potentials.}
{\em Comm. Math. Phys.} 297 (2010), no. 1, 1--43



\bibitem[FFY]{ffy} D.\ Fiebig,  U. R.\  Fiebig,  M.Yuri, 
     \emph{Pressure and equilibrium states for countable state {M}arkov
              shifts.} Israel J. Math. 131 (2002) 221--257.








\bibitem[FK]{fk} H.\ Furstenberg, H.\ Kesten, \emph{Products of random matrices.} Ann. Math. Statist. 31 (1960), 
457--468.


\bibitem[Gu1]{gu1} B.M.\ Gurevi\v c,
\emph{Topological entropy for denumerable Markov chains.}
Dokl. Akad. Nauk SSSR  10 (1969), 911--915.

\bibitem[Gu2]{gu2} B.M.\ Gurevi\v c,
\emph{Shift entropy and Markov measures in the path space of a denumerable graph.}
Dokl. Akad. Nauk SSSR  11 (1970), 744--747.

\bibitem[GS]{gs} B.M.\ Gurevich, S.V.\ Savchenko, 
\emph{Thermodynamic formalism for symbolic Markov chains with a countable number of states.} (Russian) Uspekhi Mat. Nauk 53 (1998), no. 2(320), 3--106; translation in 
Russian Math. Surveys 53 (1998), no. 2, 245--344.


%

\bibitem[IY1]{iy1} G.\ Iommi, Y.\ Yayama, \emph{Almost-additive thermodynamic formalism for countable Markov shifts.}   
Nonlinearity 25 (2012), no. 1, 165--191.

\bibitem[IY2]{iy2} G.\ Iommi, Y.\ Yayama, \emph{Zero temperature limits of Gibbs states for almost-additive potentials.}   
{\em J. Stat. Phys.} {155} (2014), {23--46}.
 
 \bibitem[J1]{j} U.\ Jung, \emph{On the existence of open and bi-continuing codes.} Trans. Amer. Math. Soc. 363 (2011), no. 3, 1399--1417.  
 
 \bibitem[J2]{ju} U.\ Jung, \emph{Fiber-Mixing Codes between Shifts of Finite Type and Factors of Gibbs measures.} Entropy (2016) 18(12), 428.

\bibitem[KR]{KR} A.\ K{\"a}enm{\"a}ki, H.\ Reeve, \emph{Multifractal Analysis of Birkhoff Averages for typical infinitely generated self-affine sets.} J. Fractal Geom. 1 (2014), no. 1, 83--152.

\bibitem[K]{k} T.\ Kempton, \emph{Factors of Gibbs measures for subshifts of finite type.} Bull. Lond. Math. Soc. 43 (2011), no. 4, 751--764. 


\bibitem[Ki]{ki} J. F. C.\ Kingman,  \emph{The ergodic theory of subadditive stochastic processes.} J. Roy. Statist. Soc. Ser. B 30 (1968) 499--510.

\bibitem[Kit]{kit} B.P.\ Kitchens, \emph{Symbolic dynamics. One-sided, two-sided and countable state Markov shifts.} Universitext. Springer-Verlag, Berlin, 1998. x+252 pp.


\bibitem[LM]{LM} D.\ Lind, B.\ Marcus.  {\em An Introduction to Symbolic Dynamics and Coding.}{ Cambridge University Press.} {Cambridge}, 1995.

\bibitem[MU1]{mu}  D.\ Mauldin, M.\ Urba\'nski,  	\emph{Dimensions and measures in infinite iterated function systems.} Proceedings London Mathematical Society, 73 (1996), 105--154.



\bibitem[MU2]{mu2} D.\ Mauldin, M.\ Urba\'nski, \emph{Gibbs states on the symbolic space over an infinite alphabet.}  Israel J. Math. 125 (2001), 93--130.






\bibitem[MPW]{MPW} B.\ Marcus, K.\ Petersen, T.\ Weissman,
{\em Entropy of hidden Markov process and connections to dynamical systems}, London Math. Soc. lecture Note Ser., 385, Cambridge Univ. Press, Cambridge, 2011


\bibitem[M]{m} A.\ Mummert,  \emph{The thermodynamic formalism for almost-additive sequences.} Discrete Contin. Dyn. Syst. 16 (2006), no. 2, 435--454.

\bibitem[Pi]{pm} M.\ Piranio, \emph{Projections of Gibbs states for H\"older Potentials.} {\em J. Stat. Phys.} {170} (2018), {952--961}.




\bibitem[PePi]{pepi} Y.B.  Pesin, B.S. Pitskel,   \emph{Topological pressure and the variational principle for noncompact sets.} 
Funktsional. Anal. i Prilozhen. 18 (1984), no. 4, 50--63, 96.


 
\bibitem[PK]{PK} M.\ Pollicott, T.\ Kempton,
{\em Factors of Gibbs measures for full shifts.}  Entropy of hidden Markov process and connections to dynamical systems, 
London Math. Soc. Lecture Note Ser., 385, 246--257, Cambridge
Univ. Press, Cambridge, 2011.


\bibitem[Ru]{Ru_book}  D.\ Ruelle, \emph{Thermodynamic formalism. The mathematical structures of equilibrium statistical mechanics,} Second edition. Cambridge Mathematical Library. Cambridge University Press, Cambridge, 2004.



\bibitem[S1]{s1} O.\ Sarig,
{\em Thermodynamic formalism for countable Markov shifts,}
Ergodic Theory Dynam. Systems 19 (1999) 1565--1593.

\bibitem[S2]{s2} O.\ Sarig,
\emph{Phase transitions for countable Markov shifts,}
Comm. Math. Phys. 217 (2001) 555--577.

\bibitem[S3]{s3} O.\ Sarig,
\emph{Existence of Gibbs measures for countable Markov shifts,}
Proc. Amer. Math. Soc.  131 (2003) 1751--1758.




\bibitem[Si]{Sinai} J.G.\ Sinai,
\emph{Gibbs measures in ergodic theory,}
Uspehi Mat. Nauk  27 (1972) 21--64.




\bibitem[W]{w3} P.\ Walters, \emph{Regularity conditions and Bernoulli properties of equilibrium states and g-measures.} J. London. Math. Soc. (2) 71 (2005) 379--396.  

\bibitem[V]{v} E.\ Verbitskiy, {\em On factors of {$g$}-measures.} {Indag. Math. (N.S.)} {22} (2011),  {no. 3-4}, {315--329}.

\bibitem[Yo]{jy} J.\ Yoo, {On factor maps that send {M}arkov measures to {G}ibbs
              measures.} {\em J. Stat. Phys.} {141} (2010), {no. 6}, {1055--1070},



\bibitem [Y1]{Y1} Y.\ Yayama,  \emph{Existence of a measurable saturated compensation function between subshifts and its applications}, Ergodic Theory Dynam. Systems 31 (2011), no. 5, 1563--1589.

\bibitem[Y2]{Y} Y.\ Yayama, \emph{On factors of Gibbs measures for almost additive potentials.}  Ergodic Theory Dynam. Systems 36 (2016), no. 1, 276--309.

\end{thebibliography}
\end{document}